\documentclass[11pt]{article}

\usepackage{amsmath,amsthm}
\usepackage{amssymb,amsfonts}
\usepackage{hyperref}
\usepackage{tikz}

\usepackage{stmaryrd}
\DeclareSymbolFont{bbold}{U}{bbold}{m}{n}
\DeclareSymbolFontAlphabet{\mathbbm}{bbold}

\usepackage{framed}							
\usepackage{color}
\definecolor{lightgrey}{rgb}{0.95,0.95,0.95}
\definecolor{darkgrey}{rgb}{0.6,0.6,0.6}

\sloppy


\usepackage[mode=multiuser,status=draft,lang=english]{fixme}
\fxsetup{theme=colorsig}

\FXRegisterAuthor{M}{aM}{Matteo}
\FXRegisterAuthor{G}{aG}{Giorgio}
\FXRegisterAuthor{S}{aS}{Silvia}


\usepackage[a4paper]{geometry}
\geometry{left=3.5cm,right=3.5cm,top=3.5cm} 

\theoremstyle{plain}
	\newtheorem*{theorem*}{Theorem}
	\newtheorem{theorem}{Theorem}[section]
	\newtheorem{proposition}[theorem]{Proposition}
	\newtheorem{lemma}[theorem]{Lemma}
	\newtheorem{corollary}[theorem]{Corollary}

\theoremstyle{definition}
	\newtheorem*{definition*}{Definition}
	\newtheorem{definition}[theorem]{Definition}

	\newtheorem{question}[theorem]{Question}
\theoremstyle{remark}

	\newtheorem{example}[theorem]{Example}

\newcommand{\id}{\ensuremath{\text{{\rm id}}}}

\newcommand{\ZFC}{\ensuremath{\text{{\sf ZFC}}}}

\newcommand{\MK}{\ensuremath{\text{{\sf MK}}}}
\DeclareMathOperator{\dom}{dom}
\DeclareMathOperator{\ran}{ran}

\DeclareMathOperator{\crit}{crit}

\DeclareMathOperator{\cof}{cof}

\DeclareMathOperator{\proj}{proj}

\DeclareMathOperator{\rank}{rank}
\DeclareMathOperator{\trcl}{trcl}
\DeclareMathOperator{\val}{val}

\DeclareMathOperator{\SkH}{SkH}
\DeclareMathOperator{\non}{non}

\newcommand{\NS}{\ensuremath{\text{{\sf NS}}}}
\newcommand{\CF}{\ensuremath{\text{{\sf CF}}}}

\newcommand{\ff}{\mathbf{F}}
\newcommand{\ii}{\mathbf{I}}

\newcommand{\BB}{\mathbb{B}}
\newcommand{\CC}{\mathbb{C}}
\newcommand{\DD}{\mathbb{D}}
\newcommand{\EE}{\mathbb{E}}

\renewcommand{\SS}{\mathbb{S}}
\newcommand{\TT}{\mathbb{T}}

\newcommand{\AAA}{\mathcal{A}}
\newcommand{\BBB}{\mathcal{B}}
\newcommand{\CCC}{\mathcal{C}}
\newcommand{\EEE}{\mathcal{E}}

\newcommand{\PPP}{\mathcal{P}}
\newcommand{\SSS}{\mathcal{S}}
\newcommand{\TTT}{\mathcal{T}}

\newcommand{\cc}{\mathfrak{c}}

\DeclareMathOperator{\Coll}{Coll}

\DeclareMathOperator{\Ult}{Ult}

\newcommand{\res}{{\upharpoonright}}
\newcommand{\cp}[1]{\left( #1 \right)}
\newcommand{\qp}[1]{\left[ #1 \right]}
\newcommand{\Qp}[1]{\left\llbracket #1 \right\rrbracket}
\newcommand{\ap}[1]{\langle #1 \rangle}
\newcommand{\bp}[1]{\left\lbrace #1 \right\rbrace}
\newcommand{\vp}[1]{\left\lvert #1 \right\rvert}
\newcommand{\0}{\mathbf 0}
\newcommand{\1}{\mathbf 1}

\newcommand{\ON}{\mathrm{\mathbf{ON}}}

\title{\textsc{Generic Large Cardinals\\and Systems of Filters}}
\author{Giorgio Audrito, Silvia Steila}
\date{}

\begin{document}
	\maketitle

	\begin{abstract}
		We introduce the notion of $\CCC$-system of filters, generalizing the standard definitions of both extenders and towers of normal ideals. This provides a framework to develop the theory of extenders and towers in a more general and concise way. In this framework we investigate the topic of definability of generic large cardinals properties.
	\end{abstract}

	\tableofcontents

	\pagebreak

	\section{Introduction} \label{sec:introduction}

Large cardinals have been among the most important axioms extending $\ZFC$ since the very beginning of modern set theory. On the one hand they provide a fine scale to measure the consistency strength of a rich variety of combinatorial principles, on the other hand they also solve important questions within set theory.
However, such cardinals are rarely used in common mathematics outside of set theory: for example, large parts of number theory and analysis can be formalized within $H_\cc$, and even if new subjects can push this limit beyond that point, it is uncommon for structures of inaccessible or larger size to be employed outside of set theory.

Generic large cardinal axioms try to address this point, and postulate the existence of elementary embeddings $j:V\to M$ with $M\subseteq V[G]$ a transitive class definable in a generic extension $V[G]$ of $V$. Contrary to the classical case one can consistently have generic large cardinal properties at cardinals as small as $\omega_1$. Thus, generic large cardinal axioms are fit to produce consequences on small objects, and might be able to settle questions arising in domains of mathematics other than set theory. A detailed presentation of this approach can be found in \cite{foreman:generic_embeddings}.

Due to the \emph{class} nature of the elementary embeddings involved in the definitions of large cardinals (both classical and generic), a key issue concerns the possibility to define (or derive) such embeddings from set-sized objects. The first natural candidates are ideals, although it turns out that they are not able to represent various relevant large cardinal properties. For this reason many extensions of the concept have been proposed, the most important of which are extenders (see among many \cite{claverie:ideal_extenders, kanamori:higher_infinite, koellner:large_cardinals}) and normal towers (see for example \cite{cox:viale:tower_forcing, larson:stationary_tower, viale:category_forcing, a:viale:notes_forcing}).

In this paper we introduce the notion of \emph{$\CCC$-system of filters}  (see Section~\ref{sec:system_filters}). This concept is inspired by the well-known definitions of extenders and towers of normal ideals, generalizes both of them, and provides a common framework in which the standard properties of extenders and towers used to define classical or generic large cardinals can be expressed in an elegant and concise way. Using the new framework given by $\CCC$-system of filters we easily generalize to the setting of generic large cardinals well-known results about extenders and towers, providing shorter and modular proofs of several well-known facts regarding classical and generic large cardinals. Furthermore, we are able to examine closely the relationship between extenders and towers, and investigate when they are equivalent or not, both in the standard case and in the generic one (see Section \ref{ssec:sys_derived}).

The second part of this paper investigates some natural questions regarding generic large cardinals. In particular, we first examine the difference between having a generic large cardinal property \emph{ideally} or \emph{generically}, and study when a generic $\CCC$-system of ultrafilters is able to reproduce a given large cardinal property. Then we focus on \emph{ideally} large cardinals, and study how the large cardinal properties are captured by the combinatorial structure of the $\CCC$-system of filters used to induce the embedding. In particular, we are able to characterize strongness-like properties via the notion of \emph{antichain splitting}, and closure-like properties via the notion of \emph{antichain guessing} (a generalization of the well-known concept of \emph{presaturation} for normal towers). Finally, we investigate to what extent it is possible to collapse a generic large cardinal while preserving its properties.

The remaining part of Section \ref{sec:introduction} recalls some standard terminology and some well-known results needed in the latter part of this paper. Section \ref{sec:system_filters} introduces the concept of $\CCC$-system of filters and develops their general theory. Section \ref{sec:generic_lc} addresses some issues regarding generic large cardinals, using the machinery previously developed.
\paragraph{Acknowledgements.}
We would like to thank Matteo Viale for his valuable suggestions, remarks and corrections; and for addressing us to the topic of generic large cardinals in the first place.


\subsection{Notation}

As in common set-theoretic use, $\trcl(x)$, $\rank(x)$ denote respectively the transitive closure and the rank of a given set $x$. We denote by $V_\alpha$ the sets $x$ such that $\rank(x) < \alpha$ and by $H_\kappa$ the sets $x$ such that $\vp{\trcl(x)} < \kappa$. We use $\PPP(x)$, $[x]^\kappa$, $[x]^{{<}\kappa}$ to denote the powerset, the set of subsets of size $\kappa$ and the ones of size less than $\kappa$. The notation $f : A \to B$ is improperly used to denote \emph{partial} functions in $A \times B$, ${}^AB$ to denote the collection of all such (partial) functions, and $f[A]$ to denote the pointwise image of $A$ through $f$. We denote by $\id: V \to V$ the identity map on $V$.

We say that $I \subseteq \PPP(X)$ is an \emph{ideal} on $X$ whenever it is closed under unions and subsets, and feel free to confuse an ideal with its dual filter when clear from the context. We denote the collection of $I$-positive sets by $I^+ = \PPP(X) \setminus I$.

We follow Jech's approach \cite{jech:set_theory} to forcing via boolean valued models. The letters $\BB$, $\CC$, $\DD,\ldots$ are used for set sized complete boolean algebras, and $\0$, $\1$ denote their minimal and maximal element. We  use $V^\BB$ for the boolean valued model obtained from $V$ and $\BB$, $\dot{x}$ for the elements (names) of $V^\BB$, $\check{x}$ for the canonical name for a set $x \in V$ in the boolean valued model $V^\BB$, $\Qp{\phi}_\BB$ for the truth value of the formula $\phi$.

When convenient we also use the generic filters approach to forcing. The letters $G$, $H$ will be used for generic filters over $V$, $\dot{G}_\BB$ denotes the canonical name for the generic filter for $\BB$, $\val_G(\dot{x})$ the valuation map on names by the generic filter $G$, $V[G]$ the generic extension of $V$ by $G$. Given a set $x$ and a model $M$, we denote by $M[x]$ the smallest model of $\ZFC$ including $M$ and containing $x$. Let $\phi$ be a formula. We write $V^\BB \models \phi$ to denote that $\phi$ holds in all generic extensions $V[G]$ with $G$ generic for $\BB$.

$\BB / I$ denotes the quotient of a boolean algebra $\BB$ by the ideal $I$, $\BB \ast \dot{\CC}$ denotes the two-step iteration intended as the collection of $\BB$-names for elements of $\dot{\CC}$ modulo equivalence with boolean value $\1$. References text for the results mentioned above are \cite{jech:set_theory, a:viale:notes_forcing, a:viale:semiproper_iterations}.

$\Coll(\kappa,{<}\lambda)$ is the L\'evy collapse that generically adds a surjective function from $\kappa$ to any $\gamma < \lambda$. In general we shall feel free to confuse a partial order with its boolean completion. When we believe that this convention may generate misunderstandings we shall be explicitly more careful.

Given an elementary embedding $j: V \to M$, we use $\crit(j)$ to denote the critical point of $j$. We denote by $\SkH^{M}(X)$ the Skolem Hull of the set $X$ in the structure $M$. Our reference text for large cardinals is \cite{kanamori:higher_infinite} and for generic elementary embeddings is \cite{foreman:generic_embeddings}.


\subsection{Generalized stationarity}

We now recall the main definitions and properties of generalized stationary sets, while feeling free to omit most of the proofs. Detailed references for the material covered in this section can be found in \cite{jech:set_theory}, \cite[Chp. 2]{larson:stationary_tower}, \cite{a:viale:notes_forcing}, \cite{a:viale:semiproper_iterations}.

\begin{definition}
	Let $X$ be an uncountable set. A set $C$ is a \emph{club} on $\PPP(X)$ iff there is a function $f_C: ~ X^{{<}\omega} \rightarrow X$ such that $C$ is the set of elements of $\PPP(X)$ closed under $f_C$, i.e.
	\[
	C = \bp{ Y \in \PPP(X): ~ f_C[Y]^{<\omega} \subseteq Y }
	\]
	A set $S$ is \emph{stationary} on $\PPP(X)$ iff it intersects every club on $\PPP(X)$.
\end{definition}

The reference to the support set $X$ for clubs or stationary sets may be omitted, since every set $S$ can be club or stationary only on $\bigcup S$. Examples of stationary sets are $\bp{X}$, $\PPP(X) \setminus \bp{X}$, $\qp{X}^\kappa$ for any $\kappa \leq \vp{X}$. Club sets can be thought of as containing all elementary submodels of a given structure on $X$, while stationary sets can be thought of as containing an elementary submodel of \emph{any} given first order structure on $X$ for a countable language.

We now introduce the definition and main properties of the non-stationary ideal.

\begin{definition}
	The \emph{non-stationary ideal} on $X$ is \[\NS_X = \bp{A \subset \PPP(X): ~ A \text{ not stationary}}\] and its dual filter is $\CF_X$, the \emph{club filter} on $X$.
\end{definition}

\begin{definition}
	Given an ideal $I$ on $X$, we say that $I$ (or equivalently its dual filter) is normal if for any $A \in I^+$ and for any choice function $f: ~ A \rightarrow X$ (i.e. $f(Y) \in Y$ for any $Y\in A$) there exists $x \in X$ such that $\bp{Y \in A : f(Y)=x} \in I^+$. We say that $I$ (or equivalently its dual filter) is fine if for any $x \in X$ the set $\bp{Y \subseteq X : x \notin Y }$ is in $I$.
\end{definition}

\begin{definition}
	Given a family $\bp{S_a \subseteq \PPP(X): ~ a \in X}$, the \emph{diagonal union} of the family is $\nabla_{a \in X} S_a = \bp{Y \in \PPP(X): ~ \exists a \in Y  ~ Y  \in S_a}$, and the \emph{diagonal intersection} of the family is $\Delta_{a \in X}  S_a = \bp{Y  \in \PPP(X): \forall a \in Y  ~ Y  \in S_a}$.
\end{definition}

\begin{lemma}[Fodor] \label{lem:fodor}
	$\NS_X$ is closed under diagonal union. Equivalently, $\NS_X$ is normal.
\end{lemma}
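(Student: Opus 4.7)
The plan is to prove the two assertions separately and then note their equivalence.

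First I would tackle closure under diagonal union directly. Assume $\{S_a : a \in X\}$ with each $S_a \in \NS_X$, so for each $a$ there is a club $C_a$ with witnessing function $f_a : X^{<\omega} \to X$ such that $C_a \cap S_a = \emptyset$. The key step is to merge the countable-ary family $\{f_a\}$ into a single function $f : X^{<\omega} \to X$ via the standard coding trick, e.g.\ set $f(\ap{a} \frown s) := f_a(s)$ for $a \in X$ and $s \in X^{<\omega}$ (and $f \equiv f(\emptyset)$ on other inputs, say). If $Y$ is closed under $f$ and $a \in Y$, then for every $s \in Y^{<\omega}$ we have $\ap{a}\frown s \in Y^{<\omega}$, so $f_a(s) = f(\ap{a}\frown s) \in Y$; thus $Y$ is closed under $f_a$ and so $Y \notin S_a$. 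Since this holds for every $a \in Y$, the club $C$ induced by $f$ is disjoint from $\nabla_{a \in X} S_a$, so the diagonal union is non-stationary.

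Next I would establish the equivalence of closure under diagonal union with normality, which is a clean two-direction argument that does not use the underlying structure of clubs. For the direction ``closed under diagonal union implies normal'': given $A \in \NS_X^+$ and a choice function $g : A \to X$, assume towards a contradiction that $A_x := \{Y \in A : g(Y) = x\}$ is non-stationary for every $x \in X$. Then $\nabla_{x \in X} A_x$ is non-stationary by hypothesis; but any $Y \in A$ satisfies $g(Y) \in Y$ and $Y \in A_{g(Y)}$, so $A \subseteq \nabla_{x} A_x$, contradicting the stationarity of $A$. Conversely, for ``normal implies closed under diagonal union'': if each $S_a$ is non-stationary but $T := \nabla_{a \in X} S_a$ were stationary, the map sending $Y \in T$ to some witnessing $a \in Y$ with $Y \in S_a$ would be a choice function on a positive set, so by normality there would exist $a^* \in X$ with $\{Y \in T : g(Y) = a^*\}$ positive; but this set is included in $S_{a^*}$, contradicting $S_{a^*} \in \NS_X$.

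The only nontrivial technical step is the coding trick in the first paragraph, and the main conceptual point there is simply verifying that closure of $Y$ under the composite $f$ implies closure under each $f_a$ for $a \in Y$; everything else is a bookkeeping argument. I do not anticipate any serious obstacle, since all the ingredients (clubs as sets closed under a finitary function, and the straightforward pigeonhole-style argument for normality) are already encoded in the definitions stated above.
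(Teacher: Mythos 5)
Your proof is correct. The paper deliberately omits the proof of this background lemma (it states that proofs in the generalized-stationarity subsection are omitted and defers to the cited references), so there is no in-paper argument to compare against; your proof is the standard one, folding the family of witnessing functions $f_a$ into a single finitary function by absorbing the index as an extra argument, and then running the routine two-direction equivalence between closure under diagonal unions and normality. The only cosmetic blemish is the clause ``$f \equiv f(\emptyset)$ on other inputs,'' which is circular as written --- just declare $f(\emptyset)$ to be an arbitrary fixed element of $X$ --- but this does not affect the argument.
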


Furthermore $\NS_X$ is the smallest normal fine ideal on $X$, as shown in the following.

\begin{lemma}\label{lem:stat}
	Let $I$ be a normal and fine ideal on $X$. If $A$ is non-stationary, $A \in I$.
\end{lemma}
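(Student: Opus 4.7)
The plan is to show that the complement of any club is in $I$; since $I$ is closed under subsets this implies $A \in I$ for every non-stationary $A$. Fix a club $C \subseteq \PPP(X)$ given by a function $f_C: X^{<\omega} \to X$ as in the definition, and suppose towards contradiction that $B = \PPP(X) \setminus C \in I^+$. For every $Y \in B$, $Y$ fails to be closed under $f_C$, so I can pick a finite tuple $s_Y = (a_1^Y,\dots,a_{n_Y}^Y) \in Y^{<\omega}$ with $f_C(s_Y) \notin Y$.

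First I would check that $I$ is $\sigma$-complete: given countably many sets $A_n \in I$, pick distinct $a_n \in X$ (possible since $X$ is uncountable), let $S_{a_n} = A_n \cap \{Y : a_n \in Y\}$ and $S_a = \emptyset$ otherwise. Each $S_a$ lies in $I$, so by normality (closure under diagonal unions, Lemma \ref{lem:fodor} read for $I$) the diagonal union $\nabla_{a \in X} S_a = \bigcup_n (A_n \cap \{Y : a_n \in Y\})$ is in $I$; combined with fineness (which ensures $A_n \setminus \{Y : a_n \in Y\} \in I$) this gives $\bigcup_n A_n \in I$. Partitioning $B$ according to $n_Y \in \omega$, $\sigma$-completeness yields some $n$ with $B_n := \{Y \in B : n_Y = n\} \in I^+$.

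Next I would iterate the choice-function form of normality on $B_n$. The assignment $Y \mapsto a_1^Y$ is a choice function on $B_n$ (since $a_1^Y \in Y$), so there is $x_1 \in X$ such that $B_n^1 = \{Y \in B_n : a_1^Y = x_1\} \in I^+$. Repeating this argument $n-1$ more times, at stage $i$ applying normality to $Y \mapsto a_{i+1}^Y$ on $B_n^i$, I obtain $x_1, \dots, x_n \in X$ and a set $B_n^n \in I^+$ on which $s_Y$ is constantly equal to $(x_1,\dots,x_n)$.

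Finally, set $b = f_C(x_1,\dots,x_n) \in X$. By construction, every $Y \in B_n^n$ satisfies $b = f_C(s_Y) \notin Y$, so $B_n^n \subseteq \{Y : b \notin Y\}$. But this latter set belongs to $I$ by fineness, contradicting $B_n^n \in I^+$. I expect the only subtle point to be the variable length of the tuples $s_Y$, handled by the $\sigma$-completeness step above; the rest is a straightforward book-keeping of iterated normality.
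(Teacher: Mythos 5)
The paper states this lemma without proof (it is recalled as standard background), so there is no in-text argument to compare against; I can only assess your proposal on its own terms. Your overall architecture is sound and is the standard one: reduce to showing that the complement of each club lies in $I$, fix the arity of the witnessing tuple, freeze the tuple by finitely many applications of choice-function normality, and contradict fineness via $f_C(x_1,\dots,x_n)$. Steps one, three and four are correct as written (including the degenerate case of the empty tuple, and the passage from the paper's definition of normality to closure under diagonal unions).

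The genuine gap is in your proof of $\sigma$-completeness. Your diagonal union only yields $\bigcup_n \left(A_n \cap \{Y : a_n \in Y\}\right) \in I$, and the leftover piece is $\bigcup_n \left(A_n \setminus \{Y : a_n \in Y\}\right)$. Each term of this union is indeed in $I$ by fineness, but concluding that the \emph{countable union} of these terms is in $I$ is precisely the statement you are trying to prove; nothing forces a given $Y$ to contain any of the chosen points $a_n$, so this remainder is not covered by the diagonal union and the argument is circular. The fact itself is true, but needs a sharper use of normality. One correct route: assume $\langle A_n : n<\omega\rangle$ increasing with $B=\bigcup_n A_n \in I^+$, fix distinct $a_n \in X$, pass to $B_0 = B \cap \{Y : a_0 \in Y\} \in I^+$ (fineness), let $n(Y)$ be least with $Y \in A_{n(Y)}$ and apply normality to the choice function $Y \mapsto a_{m(Y)}$ where $m(Y) = \max\{m \leq n(Y) : a_m \in Y\}$; on an $I$-positive set this is constantly $a_m$, and intersecting further with $\{Y : a_{m+1}\in Y\}$ (again $I$-positive by fineness) forces $n(Y) \leq m$ there, so that positive set sits inside $A_m \in I$, a contradiction. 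With this repair the rest of your proof goes through unchanged.
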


\begin{lemma}[Lifting and Projection] \label{lem:lifting}
	Let $X \subseteq Y$ be uncountable sets. If $S$ is stationary on $\PPP(X)$, then $S \uparrow Y = \bp{B \subseteq Y: ~ B \cap X \in S}$ is stationary. If $S$ is stationary on $\PPP(Y)$, then $S \downarrow X = \bp{B \cap X: ~ B \in S}$ is stationary.
\end{lemma}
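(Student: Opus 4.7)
The plan is to handle the two halves separately, each by unwinding the definition of club in terms of its generating function.

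For the projection part, given a club $C \subseteq \PPP(X)$ generated by $f:X^{<\omega}\to X$, I would extend $f$ arbitrarily to a function $f':Y^{<\omega}\to Y$ with $f'\res X^{<\omega}=f$, and let $C'$ be the club on $\PPP(Y)$ generated by $f'$. Applying stationarity of $S$ on $\PPP(Y)$, choose $B\in C'\cap S$. For any $s\in (B\cap X)^{<\omega}$ we have $f(s)=f'(s)\in B$ and $f(s)\in X$, so $f(s)\in B\cap X$; hence $B\cap X$ is closed under $f$, i.e.\ $B\cap X\in C$. Since $B\cap X\in S\downarrow X$ by definition, this witnesses $C\cap(S\downarrow X)\neq\emptyset$.

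For the lifting part, given a club $C\subseteq\PPP(Y)$ generated by $f_C:Y^{<\omega}\to Y$, the strategy is to manufacture a club $C^\ast$ on $\PPP(X)$ with the property that every $A\in C^\ast$ extends to some $B\in C$ with $B\cap X=A$. Then a stationary set $S$ on $\PPP(X)$ meets $C^\ast$ at some $A\in S$, and the corresponding $B$ satisfies $B\in C\cap (S\uparrow Y)$. To construct $C^\ast$, observe that for each $s\in X^{<\omega}$ the $f_C$-closure $\bar{s}^Y\subseteq Y$ of the range of $s$ is countable, so $\bar{s}^Y\cap X$ is countable; fix an enumeration of it (with repetitions) and define $g_n(s)$ to be its $n$-th element. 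If $A\subseteq X$ is closed under every $g_n$ and $B$ denotes the $f_C$-closure of $A$ in $Y$, then for any $b\in B\cap X$ there is a finite $s\subseteq A$ with $b\in\bar{s}^Y\cap X$, whence $b=g_n(s)\in A$ for some $n$; thus $B\cap X=A$ and $B\in C$.

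To match the definition of club on $\PPP(X)$, which requires a single generating function $X^{<\omega}\to X$, one must combine the countable family $\{g_n:n<\omega\}$ into one function $g:X^{<\omega}\to X$ such that closure under $g$ implies closure under each $g_n$. This is the main obstacle and is accomplished by a standard coding trick: enlarge the family with unary ``selector'' functions that force every non-empty $A$ to contain an index element witnessing each $n\in\omega$, and then interleave the $g_n$'s by using the first coordinate of the input sequence as a selector. The uncountability of $X$ ensures enough room for the coding. Once $g$ is defined, $C^\ast=\{A\in\PPP(X):A\text{ closed under }g\}$ is a club, and stationarity of $S$ finishes the argument.
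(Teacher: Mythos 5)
Your argument is correct, and it is the standard proof of this lemma. Note that the paper itself does not prove Lemma~\ref{lem:lifting}: it is stated among the recalled facts on generalized stationarity with the proofs deferred to the cited references, so there is no in-paper argument to compare yours against. Both halves of your proof are sound: the projection direction via an arbitrary extension $f'$ of the generating function is exactly right, and in the lifting direction the key identity $B=\bigcup\bp{\bar{s}^Y : s\in A^{<\omega}}$ (the $f_C$-closure as a directed union of closures of finite subsets) correctly reduces everything to closure of $A$ under the countably many functions $g_n$, after which the folding of $\bp{g_n : n<\omega}$ into a single $g:X^{<\omega}\to X$ via an injection $\iota:\omega\to X$ and a selector coordinate is the standard $\sigma$-completeness argument for the club filter. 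Two cosmetic points you should make explicit in a full write-up: define $g_n(\emptyset)$ appropriately (enumerating $\bar{\emptyset}^Y\cap X$ when it is nonempty, arbitrarily otherwise), and observe that only infiniteness of $X$ is needed for the coding, which the hypothesis of uncountability certainly supplies.
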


A similar result holds for clubs: if $C$ is club on $\PPP(X)$, then $C \uparrow Y$ is a club; while if $C$ is club on $\PPP(Y)$, $C \downarrow X$ contains a club. Due to the last result the notion of non-stationary ideal can be used to define the \emph{full stationary tower} of height $\lambda$, that is the coherent collection $\ap{\NS_X : ~ X \in V_\lambda}$ for some $\lambda$.

\begin{theorem}[Ulam] \label{thm:ulam}
	Let $\kappa$ be an infinite cardinal. Then for every stationary set $S \subseteq \kappa^+$, there exists a partition of $S$ into $\kappa^+$ many disjoint stationary sets.
\end{theorem}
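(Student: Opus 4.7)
The plan is to invoke the classical construction of an Ulam matrix and combine it with the fact that $\NS_{\kappa^+}$ is $\kappa^+$-complete (which follows from Fodor's lemma, since a normal ideal on $\kappa^+$ is automatically $\kappa^+$-complete when restricted to the ordinal $\kappa^+$). First I would build an Ulam matrix: for each $\beta$ with $\kappa \le \beta < \kappa^+$, use $|\beta| \le \kappa$ to fix a surjection $f_\beta : \kappa \to \beta$, and set
\[
A_{\alpha,\xi} = \bp{ \beta < \kappa^+ : \beta > \xi \text{ and } f_\beta(\alpha) = \xi } \qquad (\alpha < \kappa, \, \xi < \kappa^+).
\]
Two easy properties should be checked: for fixed $\alpha$, the family $\{A_{\alpha,\xi}\}_{\xi<\kappa^+}$ is pairwise disjoint (from uniqueness of $f_\beta(\alpha)$); and for fixed $\xi$, surjectivity of each $f_\beta$ with $\beta > \xi$ yields $\kappa^+ \setminus \bigcup_{\alpha < \kappa} A_{\alpha,\xi} \subseteq \xi + 1$, a bounded, hence non-stationary, subset of $\kappa^+$.

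Next I would exploit these two properties against a given stationary $S \subseteq \kappa^+$. For each $\xi < \kappa^+$, the set $S \setminus \bigcup_{\alpha < \kappa} A_{\alpha,\xi}$ is bounded in $\kappa^+$, so $S \cap \bigcup_{\alpha < \kappa} A_{\alpha,\xi}$ remains stationary. Since $\NS_{\kappa^+}$ is $\kappa^+$-complete, a union of $\kappa$-many non-stationary sets is non-stationary; therefore there must exist some $\alpha_\xi < \kappa$ with $S \cap A_{\alpha_\xi,\xi}$ stationary. This produces a function $\xi \mapsto \alpha_\xi$ from $\kappa^+$ to $\kappa$, and by pigeonhole some fixed value $\alpha^* < \kappa$ is attained on a set $T \subseteq \kappa^+$ of size $\kappa^+$.

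Finally, the family $\{S \cap A_{\alpha^*,\xi} : \xi \in T\}$ consists of $\kappa^+$ many pairwise disjoint stationary subsets of $S$ (disjointness coming from the first property of the Ulam matrix applied to the fixed row $\alpha^*$). Enumerating $T$ as $\ap{\xi_i : i < \kappa^+}$ and dumping the leftover $S \setminus \bigcup_{i} A_{\alpha^*,\xi_i}$ into the first piece gives a genuine partition of $S$ into $\kappa^+$ pairwise disjoint stationary sets. The only real obstacle is getting the Ulam matrix right and verifying that the complement of each column has bounded (hence non-stationary) intersection with $S$; once that is in place, the $\kappa^+$-completeness of $\NS_{\kappa^+}$ and pigeonhole finish the argument routinely.
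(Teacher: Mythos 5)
Your proof is correct: it is the classical Ulam-matrix argument, which is exactly what the paper's attribution to Ulam points to (the paper lists Theorem~\ref{thm:ulam} among recalled background facts and omits the proof entirely, so there is no in-paper argument to diverge from). The only microscopic imprecision is that the complement $\kappa^+ \setminus \bigcup_{\alpha<\kappa} A_{\alpha,\xi}$ is contained in $(\xi+1)\cup\kappa$ rather than in $\xi+1$ (since $f_\beta$ is defined only for $\beta\ge\kappa$), but this set is still bounded in $\kappa^+$, so the argument goes through unchanged.
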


Stationary sets are to be intended as \emph{large} sets. Moreover, they cannot be too small even in literal sense.

\begin{lemma} \label{lem:small_ns}
	Let $S \subseteq \PPP(X) \setminus \bp{X}$ be such that $\vp{S} < \vp{X}$. Then $S$ is non-stationary.
\end{lemma}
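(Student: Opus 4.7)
The plan is to partition $S$ by the cardinalities of its elements. Setting $\mu = \vp{S}$, define $S_1 = \bp{Y \in S : \vp{Y} < \mu}$ and $S_2 = \bp{Y \in S : \vp{Y} \geq \mu}$. Since $\NS_X$ is an ideal, it suffices to show each piece is non-stationary. The main technical hurdle I anticipate is that, for $S_2$, the natural approach builds an injective choice function $\sigma : S_2 \to X$ with $\sigma(Y) \in Y$ by transfinite recursion, and this can fail when the $Y$'s are small; the partition is engineered precisely to quarantine such sets in $S_1$.

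For $S_1$, an elementary bound gives $\vp{\bigcup S_1} \leq \vp{S_1} \cdot \sup\bp{\vp{Y} : Y \in S_1} \leq \mu \cdot \mu = \mu$ in the infinite case (the finite case being easier), so $\bigcup S_1 \subsetneq X$ because $\mu < \vp{X}$. Fixing $x^* \in X \setminus \bigcup S_1$, the club defined by the constant function $f_C \equiv x^*$, namely $\bp{Z \in \PPP(X) : x^* \in Z}$, is disjoint from $S_1$.

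For $S_2$, enumerate $S_2 = \bp{Y_\beta : \beta < \vp{S_2}}$ and recursively choose $\sigma(Y_\beta) \in Y_\beta \setminus \bp{\sigma(Y_\gamma) : \gamma < \beta}$; this set is nonempty because $\vp{Y_\beta} \geq \mu > \vp{\beta}$, since $\beta < \vp{S_2} \leq \mu$. Then, picking some $x_\beta \in X \setminus Y_\beta$ for each $\beta$, define $f_C : X^{{<}\omega} \to X$ by sending the length-one sequence $(\sigma(Y_\beta))$ to $x_\beta$ and taking an arbitrary default value elsewhere. If any $Y_\beta$ were closed under $f_C$ then, since $\sigma(Y_\beta) \in Y_\beta$, we would have $x_\beta = f_C((\sigma(Y_\beta))) \in Y_\beta$, contradicting the choice of $x_\beta$. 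Thus the resulting club misses $S_2$, and $S = S_1 \cup S_2$ is non-stationary.
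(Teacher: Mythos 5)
Your proof is correct and follows essentially the same route as the paper's: the same split of $S$ into $S_1$ (elements of size ${<}\vp{S}$, handled by bounding $\vp{\bigcup S_1}$) and $S_2$ (handled by recursively choosing pairwise distinct representatives $\sigma(Y_\beta)\in Y_\beta$ and points $x_\beta\in X\setminus Y_\beta$, then defining a club function sending $\bp{\sigma(Y_\beta)}$ to $x_\beta$). The only differences are cosmetic — you spell out the explicit club witnessing non-stationarity of $S_1$ and the well-definedness of $f_C$, which the paper leaves implicit.
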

\begin{proof}
	Let $S = S_1 \cup S_2$, $S_1 = \bp{Y \in S : ~ \vp{Y} < \vp{S}}$, $S_2 = \bp{Y \in S : ~ \vp{Y} \geq \vp{S}}$. Since $\vp{\bigcup S_1} \leq \vp{S}\cdot \vp{S}= \vp{S} < \vp{X}$, $S_1$ is non-stationary. We now prove that $S_2$ is non-stationary as well.

	Fix an enumeration $S_2 = \bp{Y_\alpha : ~ \alpha < \gamma}$ with $\gamma = \vp{S_2} < \vp{X}$. For all $\alpha < \gamma$, define recursively $x_\alpha \in X \setminus Y_\alpha$, $y_\alpha \in Y_\alpha \setminus \bp{y_\beta: ~ \beta < \alpha}$. Such $x_\alpha$ exists since $X \notin S$, and such $y_\alpha$ exists since $|Y_\alpha| \geq |S| = \gamma > \alpha$. Let $f: \qp{X}^{{<}\omega} \to X$ be such that $f(\bp{y_\alpha}) = x_\alpha$, $f(s) = x_0$ otherwise. Thus $C_f \cap S_2 = \emptyset$, hence $S_2$ is non-stationary.
\end{proof}


\subsection{Standard extenders and towers} \label{ssec:standard_systems}

We recall here the standard definitions of $\ap{\kappa,\lambda}$-extender (see e.g. \cite{koellner:large_cardinals}) and tower of height $\lambda$ (see e.g. \cite{viale:category_forcing}) in the form that is more convenient to us.

Given $a, b \in [\lambda]^{{<}\omega}$ such that\footnote{Here and in the following we assume that finite sets of ordinals are always implicitly ordered by the natural ordering on the ordinals.} $b= \bp{\alpha_0, \dots, \alpha_n} \supseteq a = \bp{\alpha_{i_0}, \dots, \alpha_{i_m}}$ and $s=\bp{s_0, \dots, s_n}$, let $\pi_{ba}(s) = \bp{s_{i_0}, \dots, s_{i_m}}$.

\begin{definition}\label{def:standardext}
	$\EE = \bp{F_a : ~ a \in \qp{\lambda}^{{<}\omega}}$ is a standard $\ap{\kappa,\lambda}$-extender with supports $\ap{\kappa_a : a \in [\lambda]^{{<}\omega}}$ iff the following holds.
	\begin{enumerate}
		\item (Filter property) For all $a \in [\lambda]^{{<}\omega}$, $F_a$ is a ${<}\kappa$-complete filter on $[\kappa_a]^{\vp{a}}$ and $\kappa_a$ is the least $\xi$ such that $[\xi]^{\vp{a}} \in F_a$;
		\item (Compatibility) if $a \subseteq b \in [\lambda]^{{<}\omega}$ then
		\begin{enumerate}
			\item $\kappa_a \leq \kappa_b$;
			\item if $\max(a)=\max(b)$, then $\kappa_a =\kappa_b$;
			\item $A \in F_a$  iff $ {\pi}^{-1}_{ba}[A] \in F_b$;
		\end{enumerate}
		\item (Uniformity) $\kappa_{\bp{\kappa}}=\kappa$;
		\item (Normality) Assume that $a \in[\lambda]^{{<}\omega}$, $A \in I_a^+$ where $I_a$ is the dual of $F_a$, $u:A \to \kappa_a$, $i \in |a|$ are such that $u(s) \in s_i$ for all $s \in A$. Then there exist $\beta \in a_i$, $b \supseteq a \cup \bp{\beta}$ and  $B \leq_{\EE} A$ (i.e. such that $\pi_{ba}^{-1}[A]\supseteq B$) with $B \in {I}_b^+$ such that for all $s \in B$, $u(\pi_{ba}(s)) = s_j$, where $b_j=\beta$.
	\end{enumerate}
\end{definition}

\begin{definition}\label{def:standardtow}
	$\TT = \bp{F_a : ~ a \in  V_\lambda}$ is a standard tower of height $\lambda$ iff the following holds.
	\begin{enumerate}
		\item (Filter property) For all $a \in V_\lambda$, $F_a$ is a non trivial filter on $\PPP(a)$;
		\item (Compatibility) For all $a \subseteq b$, $A \in F_a$ iff $A \uparrow b = \bp{X \subseteq b : ~ X \cap a \in A} \in F_b$;
		\item (Fineness)  For all $a \in V_\lambda$ and $x \in a$ we have $\bp{X \subseteq a : ~ x \in X} \in F_a$;
		\item (Normality) Given $A \in {I}_a^+$, $u: A \to V$ such that $u(X) \in X$ for any $X \in A$, there exist $b\supseteq a$, $B \in {I}^+_b$ with $B\leq_\TT A$ (i.e. such that $A\uparrow b\supseteq B$),  and a fixed $y$ such that $u(X\cap a) =y$ for all $X \in B$.
	\end{enumerate}
\end{definition}

	\section{Systems of filters} \label{sec:system_filters}

In this section we present the definition and main properties of $\CCC$-systems of filters. This notion has both classical extenders \cite{kanamori:higher_infinite, koellner:large_cardinals}, ideal extenders (recently introduced by Claverie in \cite{claverie:ideal_extenders}) and towers \cite{cox:viale:tower_forcing, larson:stationary_tower, viale:category_forcing} as special cases, and it is able to generalize and subsume most of the standard results about extenders and towers.

Throughout this section let $V$ denote a transitive model of $\ZFC$.

\begin{definition}
	We say that a set $\CCC\in V$ is a \emph{directed set of domains} iff the following holds:
	\begin{enumerate}
		\item \emph{(Ideal property)}  $\CCC$ is closed under subsets and unions;
		\item \emph{(Transitivity)} $\bigcup \CCC$ is transitive, i.e. for every $y \in x \in a \in \CCC$ we have $y \in \bigcup \CCC$ (or, equivalently in presence of the ideal property, $\bp{y} \in \CCC$).
	\end{enumerate}
	We say that $\CCC$ has length $\lambda$ iff $\rank(\CCC) = \lambda$, and that $\CCC$ is ${<}\gamma$-directed iff it is closed under unions of size ${<}\gamma$ in $V$.
\end{definition}

\begin{quote}
\begin{example}
	In the case of extenders, $\CCC$ will be $\qp{\lambda}^{{<}\omega}$, while for towers it will be $V_\lambda$. The first is absolute between transitive models of $\ZFC$, while the latter is ${<}\lambda$-directed whenever $\lambda$ is regular. These two different properties entail most of the differences in behaviour of these two objects.
\end{example}
\end{quote}

\begin{definition}
	Let $\CCC\in V$ be a directed set of domains. Given a domain $a \in \CCC$, we define $O_a$ as the set of functions
	\[
	O_a = \bp{\pi_M \res (a \cap M) : ~ M \subseteq \trcl(a),\, M\in V \text{ extensional}}
	\]
	where $\pi_M$ is the Mostowski collapse map of $M$. If $a \subseteq b$, we define the \emph{standard projection} $\pi_{ba} : O_b \to O_a$ by $\pi_{ba}(f) = f \res a$. 
\end{definition}

We shall sometimes denote $\pi_{ba}$ by $\pi_a$ and $\pi_{ba}^{-1}$ by $\pi_b^{-1}$ when convenient. Notice that every $f \in O_b$ is $\in$-preserving, and that $\pi_{ba}(f) = f \res a \in O_a$ for any $a \subseteq b$, so that $\pi_{ba}$ is everywhere defined. From now on we shall focus on filters on the boolean algebra $\PPP^V(O_a)$ for $a \in \CCC$ and $\CCC\in V$ a directed set of domains.

\begin{quote}
\begin{example}
	In the case of extenders, any $f \in O_a$ will be an increasing function from the sequence $a \in \qp{\lambda}^{{<}\omega}$ to smaller ordinals. $O_a$ can be put in correspondence with the domain $\kappa_a^{\vp{a}}$ of a standard extender via the mapping $f \mapsto \ran(f)$, $\pi_{ba}$ will correspond in the new setting to the usual notion of projection for extenders.

	In the case of towers, any $f \in O_a$ with $a$ transitive will be the collapsing map of a $M \subseteq a$. In this case $O_a$ can be put in correspondence with the classical domain $\PPP^V(a)$ via the mapping $f \mapsto \dom(f)$, and $\pi_{ba}$ will correspond to the usual notion of projection for towers.
	
	A complete proof of the above mentioned equivalences can be found in Section \ref{ssec:ext_tow_systems}. 
\end{example}
\end{quote}

\begin{definition}
	Define $x \trianglelefteq y$ as $x \in y \vee x = y$. We say that $u : O_a \to V$ is \emph{regressive} on $A \subseteq O_a$ iff for all $f \in A$, $u(f) \trianglelefteq f(x_f)$ for some $x_f \in \dom(f)$. We say that $u$ is \emph{guessed} on $B \subseteq O_b$, $b \supseteq a$ iff there is a fixed $y \in b$ such that for all $f \in B$, $u(\pi_{ba}(f)) = f(y)$.
\end{definition}

\begin{definition} \label{def:csystem}
	Let $V \subseteq W$ be transitive models of $\ZFC$ and $\CCC \in V$ be a directed set of domains. We say that $\SS = \bp{F_a : ~ a \in \CCC}\in W$ is a $\CCC$-system of $V$-filters, and we equivalently denote $\SS$ also by $\bp{I_a : ~ a \in \CCC}$ where $I_a$ is the dual ideal of $F_a$, iff the following holds:
	\begin{enumerate}
		\item \emph{(Filter property)} for all $a \in \CCC$, $F_a$ is a non-trivial filter on the boolean algebra $\PPP^V(O_a)$;
		\item \emph{(Fineness)} for all $a \in \CCC$ and $x \in a$, $\bp{f \in O_a : ~ x \in \dom(f)} \in F_a$;
		\item \emph{(Compatibility)} for all $a \subseteq b$ in $\CCC$ and $A \subseteq O_a$, $A \in F_a \iff \pi_{ba}^{-1}[A] \in F_b$;
		\item \emph{(Normality)} every function $u : A\to V$ in $V$ that is regressive on a set $A \in I_a^+$ for some $a \in \CCC$ is guessed on a set $B \in I_b^+$ for some $b \in \CCC$ such that $B \subseteq \pi^{-1}_{ba}[A]$;
	\end{enumerate}

	We say that $\SSS$ is a $\CCC$-system of $V$-ultrafilters if in addition:
	\begin{enumerate}
		\item[5.] \emph{(Ultrafilter)} for all $a \in \CCC$, $F_a$ is an ultrafilter on $\PPP^V(O_a)$.
	\end{enumerate}
\end{definition}

We shall feel free to drop the reference to $V$ when clear from the context, hence denote the $\CCC$-systems of $V$-filters as $\CCC$-systems of filters. When we believe that this convention may generate misunderstandings we shall be explicitly more careful. To clearly distinguish $\CCC$-systems of filters from $\CCC$-systems of ultrafilters, in the following we shall use $\SS$, $\EE$, $\TT$ for the first and $\SSS$, $\EEE$, $\TTT$ for the latter.

\begin{definition}
	Let $\SS$ be a $\CCC$-system of filters, $a$ be in $\CCC$. We say that $\kappa_a$ is the \emph{support} of $a$ iff it is the minimum $\alpha$ such that $O_a \cap {}^a V_\alpha \in F_a$. We say that $\SS$ is a $\ap{\kappa, \lambda}$-system of filters if and only if:
	\begin{itemize}
		\item it has length $\lambda$ and $\kappa \subseteq \bigcup \CCC$,
		\item $F_{\bp{\gamma}}$ is principal generated by $\id \res \bp{\gamma}$ whenever $\gamma < \kappa$,
		\item $\kappa_a \leq \kappa$ whenever $a \in V_{\kappa+2}$.
	\end{itemize}
\end{definition}

Notice that $\kappa_a \leq \rank(a)$, and $\kappa_a = \rank(a)$ when $F_a$ is principal as in the above definition. In particular, $\kappa_{\bp{\gamma}} = \gamma+1$ in this case. The definition of $\CCC$-system of filters entails several other properties commonly required for coherent systems of filters.

\begin{proposition} \label{prop:largeness}
	Let $\SS$ be a $\CCC$-system of filters. Then $dF_a = \bp{\dom[A] : ~ A \in F_a}$ is a normal and fine filter on $\PPP(a)$, for any $a$ in $\CCC$ infinite. In particular if $a$ is uncountable, $\dom[A]$ is stationary for all $A \in F_a$.
\end{proposition}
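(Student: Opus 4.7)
The plan is to identify $dF_a$ with the pushforward filter along the domain map $\dom : O_a \to \PPP(a)$, by observing that $X \supseteq \dom[A]$ is equivalent to $\dom^{-1}[X] \supseteq A$; modulo upward closure, $dF_a$ therefore agrees with $\{X \subseteq \PPP(a) : \dom^{-1}[X] \in F_a\}$. In this guise the filter property is immediate from that of $F_a$, via the elementary identities $\dom^{-1}[X \cap Y] = \dom^{-1}[X] \cap \dom^{-1}[Y]$ and $\dom^{-1}[\emptyset] = \emptyset$; the same translation also yields the convenient equivalence $X \in dF_a^+ \iff \dom^{-1}[X] \in F_a^+$. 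Fineness then reduces directly to fineness of $\SS$: for each $x \in a$, $\dom^{-1}[\{Y \subseteq a : x \in Y\}] = \{f \in O_a : x \in \dom(f)\} \in F_a$.

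The main step is normality of $dF_a$. Given $A \in dF_a^+$ and a choice function $u : A \to a$, I would lift $u$ to $\tilde{u} : \dom^{-1}[A] \to V$ by $\tilde{u}(f) = f(u(\dom(f)))$. This is regressive in the $\CCC$-system sense of Definition~\ref{def:csystem}, witnessed by $x_f = u(\dom(f)) \in \dom(f)$. Applying the normality clause of the $\CCC$-system to $\tilde{u}$ on $\dom^{-1}[A] \in F_a^+$ supplies some $b \supseteq a$ in $\CCC$, a positive set $B \in F_b^+$ with $B \subseteq \pi_{ba}^{-1}[\dom^{-1}[A]]$, and a fixed $y \in b$ such that $\tilde{u}(\pi_{ba}(f)) = f(y)$ for every $f \in B$; unpacking this reads $f(u(\dom(f) \cap a)) = f(y)$. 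After intersecting $B$ with the fine set $\{f \in O_b : y \in \dom(f)\}$ to obtain a still $F_b$-positive $B'$, the injectivity of each $f \in O_b$ as a restriction of a Mostowski collapse forces $u(\dom(f) \cap a) = y$; since $u(\dom(f) \cap a) \in a$, this automatically places $y$ in $a$. Setting $A_y = \{Y \in A : u(Y) = y\}$, the inclusion $\pi_{ba}[B'] \subseteq \dom^{-1}[A_y]$ gives $B' \subseteq \pi_{ba}^{-1}[\dom^{-1}[A_y]]$, and compatibility of $\SS$ promotes $\dom^{-1}[A_y]$ to $F_a^+$, i.e., $A_y \in dF_a^+$.

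For the stationarity clause, once $dF_a$ is known to be a normal and fine filter on the uncountable set $a$, Lemma~\ref{lem:stat} applied to the dual ideal $dI_a$ gives $\NS_a \subseteq dI_a$, so every $dF_a^+$-set is stationary on $\PPP(a)$. Since $\dom^{-1}[\dom[A]] \supseteq A \in F_a$ implies $\dom[A] \in dF_a$ for each $A \in F_a$, the stationarity of $\dom[A]$ follows. The step I expect to require most care is the normality one: packaging $u$ into a genuinely $\CCC$-system-regressive $\tilde{u}$, and then recovering a witness $y$ lying in $a$ rather than merely in $b$. The key ingredients are that the notion of regressive in Definition~\ref{def:csystem} allows the witnessing coordinate $x_f$ to depend on $f$, and that injectivity of Mostowski collapses combined with fineness converts the equality $f(x_f) = f(y)$ into $x_f = y$, which is forced into $a$ by the range of $u$.
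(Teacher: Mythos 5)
Your proof is correct and follows essentially the same route as the paper's: both lift the choice function $u$ on $\dom[A]$ to the regressive map $f \mapsto f(u(\dom(f)))$ on $O_a$, apply the normality clause of the $\CCC$-system, and use injectivity of the collapse maps to recover a constant value for $u$, with stationarity then coming from Lemma~\ref{lem:stat}. You are somewhat more explicit than the paper about the bookkeeping (transferring positivity along $\dom^{-1}$ and $\pi_{ba}^{-1}$, and checking that the witness $y$ lands in $a$), but the underlying argument is the same.
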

\begin{proof}
	Filter property and fineness follow directly from restricting the corresponding points in Definition \ref{def:csystem} to $\dom[O_a]$. We now focus on normality. Let $u : D \to a$ where $D = \dom[A]$ be such that $u(X) \in X$ for all $X \in D$ (i.e. $X = \dom(f)$ for some $f$ in $A$). Then we can define $v: A \to V$ as $v(f) = f(u(\dom(f)))$. Let $B \in I_b^+$, $y \in b$ be such that $v(\pi_{ba}(f)) = f(u(\dom(\pi_{ba}(f)))) = f(y)$ for all $f \in B$ by normality. Since every $f \in B$ is injective, $u(\dom(\pi_{ba}(f))) = y$ for all $f \in B$ hence $u$ is constant on $\dom[B] \in dI_a^+$. By Lemma \ref{lem:stat} if $a$ is uncountable we conclude that $\dom[A]$ is stationary for any $A \in F_a$.
\end{proof}

\begin{proposition} \label{prop:support_monotonicity}
	Let $\SS$ be a $\CCC$-system of filters, $a, b$ be in $\CCC$. Then $\rank(a) \leq \rank(b)$ implies that $\kappa_a \leq \kappa_b$, i.e. the supports depend (monotonically) only on the ranks of the domains.
\end{proposition}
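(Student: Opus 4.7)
The plan is to split the statement into a subset monotonicity lemma, $a\subseteq b\Rightarrow\kappa_a\le\kappa_b$, and an equal-rank invariance lemma, $a\subseteq b$ with $\rank(a)=\rank(b)$ implies $\kappa_a=\kappa_b$. Together these yield the proposition: given $a,b\in\CCC$ with $\rank(a)\le\rank(b)$, take $c:=a\cup b\in\CCC$, so that $\rank(c)=\rank(b)$; subset monotonicity gives $\kappa_a\le\kappa_c$ and equal-rank invariance gives $\kappa_c=\kappa_b$, whence $\kappa_a\le\kappa_b$.

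Subset monotonicity is a direct application of compatibility. Let $\beta=\kappa_b$ and $B=O_b\cap{}^bV_\beta\in F_b$. Since restricting a function cannot enlarge its range, the projection $A:=\pi_{ba}[B]$ is contained in $O_a\cap{}^aV_\beta$; because $\pi_{ba}^{-1}[A]\supseteq B\in F_b$, compatibility forces $A\in F_a$, so $O_a\cap{}^aV_\beta\in F_a$ and $\kappa_a\le\beta$.

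For equal-rank invariance I would argue by transfinite induction on $|b\setminus a|$. When $|b\setminus a|\ge 2$, pick any $y\in b\setminus a$ and interpolate $a':=a\cup\{y\}\in\CCC$ (transitivity of $\bigcup\CCC$ ensures $\{y\}\in\CCC$); since $\rank(y)<\rank(a)=\rank(b)$ we have $\rank(a')=\rank(a)$, and both $|a'\setminus a|=1$ and $|b\setminus a'|<|b\setminus a|$, so the inductive hypothesis gives $\kappa_a=\kappa_{a'}=\kappa_b$. The base case is $b=a\cup\{y\}$ with $y\notin a$ and $\rank(y)<\rank(a)$, which I handle by a secondary induction on $\rank(y)$: when $y=\emptyset$, $g(\emptyset)=\pi_M(\emptyset)=\emptyset\in V_\alpha$ for any $\alpha\ge 1$, so $\ran(g)\subseteq V_\alpha$ reduces to $\ran(g\res a)\subseteq V_\alpha$, yielding $\kappa_b\le\kappa_a$ immediately.

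For the secondary inductive step, assuming $\kappa_b>\alpha:=\kappa_a$, the set $D:=\pi_{ba}^{-1}[O_a\cap{}^aV_\alpha]\setminus(O_b\cap{}^bV_\alpha)$ lies in $I_b^+$; feeding the regressive function $u(g)=g(y)$ into normality produces $d\supseteq b$ in $\CCC$, $E\in I_d^+$ with $E\subseteq\pi_{db}^{-1}[D]$, and (by injectivity of the collapse maps) a fixed $y^*=y$ with $f(y)\notin V_\alpha$ for all $f\in E$. The contradiction sought is $\kappa_{\{y\}}\le\alpha$, since compatibility with $\{y\}\subseteq d$ would then place $\{f\in O_d:f(y)\in V_\alpha\}$ in $F_d$, forcing $E$ into $I_d$. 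The main obstacle is this bound, since $\kappa_{\{y\}}\le\alpha$ is itself a lower-rank case of the proposition being proved; the intended remedy is to iterate normality on elements $z\in f(y)$ of high rank, write $z=f(y'')$ for a suitable $y''\in y\cap M$ (using transitivity of $\bigcup\CCC$), and invoke the secondary IH at the pair $b\subseteq b\cup\{y''\}$ with $\rank(y'')<\rank(y)$, performing a simultaneous well-founded descent along the $\in$-chain below $y$ and along the threshold ordinal to close the argument.
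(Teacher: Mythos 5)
Your top-level decomposition (subset monotonicity plus equal-rank invariance, glued through $c=a\cup b$) is sound, and your proof of subset monotonicity via compatibility is correct. Be aware, though, that the paper takes a completely different and much shorter route: Lemma \ref{lem:supports} (via Proposition \ref{prop:induced_system} and the canonical representatives of Proposition \ref{prop:rappcanonici}) identifies $\kappa_a$ as the least $\alpha$ such that $\Qp{j_{\dot{\ff}(\SS)}(\check{\alpha})\geq\rank(\check{a})}_\SS=\1$, after which monotonicity in $\rank(a)$ is immediate; all the combinatorial work you attempt by hand is absorbed there into \L o\'s's Theorem.

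The gap in your write-up is exactly the step you flag as the ``main obstacle'', and the sketched remedy does not close it. A minor issue first: the induction on $\vp{b\setminus a}$ does not descend when $b\setminus a$ is infinite (the typical situation for towers); this is repairable, since one application of normality to the regressive map $g\mapsto g(y_g)$, where $y_g\in\dom(g)\cap(b\setminus a)$ witnesses $\ran(g)\not\subseteq V_\alpha$, already yields a single $y^*\in b\setminus a$ with $f(y^*)\notin V_\alpha$ on a positive set, reducing everything to the singleton case with no induction on $\vp{b\setminus a}$. The essential problem is that the singleton bound $\kappa_{\bp{y}}\leq\kappa_a$ for $\rank(y)<\rank(a)$ carries the entire content of the proposition, and your descent does not prove it: iterating normality along $z\in f(y)$ and invoking the ``secondary IH at $b\subseteq b\cup\bp{y''}$'' only relates $\kappa_{b\cup\bp{y''}}$ to $\kappa_b$, which is circular with respect to the bound $\kappa_b\leq\kappa_a$ you are after, and nothing in the descent ever compares $f(y)$ with the values $f(w)$ for $w\in a$ --- yet that comparison is the only place where the hypothesis $\rank(y)<\rank(a)$ can enter. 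A workable combinatorial version would fix $w\in a$ with $\rank(w)\geq\rank(y)$ (possible since $\rank(y)<\rank(a)=\sup\bp{\rank(w)+1 : w\in a}$) and prove by induction on $\rank(x)$ that $\rank(x)\leq\rank(w)$ implies $\bp{f : \rank(f(x))\leq\rank(f(w))}$ is $F$-large, the inductive step applying normality to a choice of $z\in f(x)$ with $\rank(z)\geq\rank(f(w))$ to produce $y''\in x$ contradicting the strict form of the inductive hypothesis at $\rank(y'')<\rank(x)$. As written, your proposal does not contain this (or any complete) argument for the key step.
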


\begin{proposition} \label{prop:completeness}
	Let $\SS$ be a $\ap{\kappa, \lambda}$-system of filters, $a$ be in $\CCC$. Then $F_a$ is ${<}\kappa$-complete.
\end{proposition}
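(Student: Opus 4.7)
My plan is to derive a contradiction by applying normality to the ``least bad index'' function, exploiting the hypothesis that $F_{\bp{\xi}}$ is principal for every $\xi < \kappa$. Suppose for contradiction that $\bp{A_\xi : \xi < \gamma} \subseteq F_a$ with $\gamma < \kappa$ and $\bigcap_{\xi<\gamma} A_\xi \notin F_a$; then $B := \bigcup_{\xi<\gamma}(O_a \setminus A_\xi)$ lies in $I_a^+$, and for each $f \in B$ we can define $\xi(f)$ to be the least $\xi < \gamma$ with $f \notin A_\xi$.

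First I would lift to the domain $b := a \cup \bp{\gamma} \in \CCC$; this is legitimate because $\gamma \in \bigcup \CCC$ (by the hypothesis $\kappa \subseteq \bigcup \CCC$) and $\CCC$ is closed under subsets and finite unions. Compatibility gives $\pi_{ba}^{-1}[B] \in I_b^+$, and since $F_{\bp{\gamma}}$ is the principal filter generated by $\id \res \bp{\gamma}$, applying compatibility to the singleton $\bp{\id \res \bp{\gamma}} \in F_{\bp{\gamma}}$ yields $L := \bp{h \in O_b : h(\gamma) = \gamma} \in F_b$. On $B^* := \pi_{ba}^{-1}[B] \cap L \in I_b^+$, the function $u(h) := \xi(h \res a)$ is regressive, with witness $x_h = \gamma$, because $u(h) < \gamma = h(\gamma)$.

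Normality then produces some $c \supseteq b$ in $\CCC$, a set $C \in I_c^+$ with $C \subseteq \pi_{cb}^{-1}[B^*]$, and an element $y \in c$ such that $u(\pi_{cb}(g)) = g(y)$ for every $g \in C$. I expect the main obstacle to be pinning down $y$: each $g \in C$ satisfies $g(\gamma) = \gamma$ (because $\pi_{cb}(g) \in L$) while $g(y) < \gamma$ (because $u$ takes values in $\gamma$), so the $\in$-preservation of the Mostowski collapse $g$ forces $y < \gamma$ (otherwise $g(y) \geq g(\gamma) = \gamma$, a contradiction).

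With $y < \gamma < \kappa$ in hand, $F_{\bp{y}}$ is principal generated by $\id \res \bp{y}$, so compatibility again gives $L' := \bp{g \in O_c : g(y) = y} \in F_c$, whence $C' := C \cap L' \in I_c^+$. For every $g \in C'$, $\xi(\pi_{ca}(g)) = g(y) = y$, so $\pi_{ca}(g) \notin A_y$, i.e.\ $C' \subseteq \pi_{ca}^{-1}[O_a \setminus A_y]$. Compatibility then yields $O_a \setminus A_y \in I_a^+$, contradicting $A_y \in F_a$ and finishing the argument.
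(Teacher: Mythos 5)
Your proof is correct, but it takes a genuinely different route from the paper's. The paper deliberately defers this proposition to Section~\ref{ssec:ultrapowers} so as to derive it from the generic ultrapower: by Proposition~\ref{prop:critical_point} the map $j=j_{\dot{\ff}(\SS)}$ has critical point $\kappa$ with boolean value $\1$, so a family $\AAA\subseteq F_a$ of size ${<}\kappa$ satisfies $j(\AAA)=j[\AAA]$, whence $\cp{j\res a}^{-1}\in\bigcap j[\AAA]=j(\bigcap\AAA)$, and Proposition~\ref{prop:induced_system} translates this back into $\bigcap\AAA\in F_a$. You instead run the classical ``normal and non-trivial below $\kappa$ implies $\kappa$-complete'' argument directly from the axioms of Definition~\ref{def:csystem}: normality applied to the least-failed-index function, with principality of $F_{\bp{\xi}}$ for $\xi<\kappa$ used twice --- once to make $u$ regressive via the witness $\gamma$, once to freeze the guessed value $y$ --- pins down a single $A_y$ whose preimage-complement is positive, contradicting compatibility. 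All steps check out; the only spot worth polishing is the inference from $g(y)\in g(\gamma)$ to $y\in\gamma$: your parenthetical ``otherwise $g(y)\geq g(\gamma)$'' tacitly assumes $y$ is an ordinal comparable with $\gamma$, whereas the clean justification is that each $g\in O_c$ is a restriction of a Mostowski collapse and hence an $\in$-isomorphism onto its image (exactly the fact the paper invokes in Proposition~\ref{prop:rappcanonici}), which forces $y$ to be an ordinal below $\gamma$. As for what each approach buys: the paper's proof is four lines once the ultrapower machinery is available, while yours is self-contained, could be placed immediately after the statement without the forward reference, and makes visible that only normality, compatibility and the principality clause of the $\ap{\kappa,\lambda}$-definition are needed. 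In both arguments the family $\bp{A_\xi:\xi<\gamma}$ must be taken in $V$ --- in yours so that the index function $u$ is in $V$ as normality requires, in the paper's so that $j$ can be applied to $\AAA$.
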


We defer the proof of the last two propositions to Section \ref{ssec:ultrapowers} (just before Proposition \ref{prop:critical_point}) for our convenience. We are now ready to introduce the main practical examples of $\CCC$-system of filters.

\begin{definition} \label{def:tow_ext}
	Let $V\subseteq W$ be transitive models of $\ZFC$.
	
	$\EE\in W$ is an \emph{ideal extender} on $V$ iff it is a $[\lambda]^{{<}\omega}$-system of filters on $V$ for some $\lambda$. $\EEE\in W$ is an \emph{extender} on $V$ iff it is a $[\lambda]^{{<}\omega}$-system of ultrafilters on $V$.

	$\EE\in W$ is an \emph{ideal $\gamma$-extender} on $V$ iff it is a $([\lambda]^{{<}\gamma})^V$-system of filters for some $\lambda$. $\EEE$ is a \emph{$\gamma$-extender} on $V$ iff it is a $([\lambda]^{{<}\gamma})^V$-system of ultrafilters on $V$.

	$\TT \in W$ is an \emph{ideal tower} iff it is a $V_\lambda$-system of filters for some $\lambda$. $\TTT \in W$ is a \emph{tower} iff it is a $V_\lambda$-system of ultrafilters.
\end{definition}

The above definitions of extender and tower can be proven equivalent to the classical ones defined in Section \ref{ssec:standard_systems} (see also \cite{koellner:large_cardinals, viale:category_forcing}) via the mappings $\ran_a : O_a \to \qp{\kappa_a}^{\vp{a}}$, $f \mapsto \ran(f)$ (for extenders) and $\dom_a : O_a \to \PPP(a)$, $f \mapsto \dom(f)$ (for towers). Furthermore, $\ap{\kappa_a : ~ a \in \CCC}$ correspond to the supports of long extenders as defined in \cite{koellner:large_cardinals}. A detailed account of this correspondence is given in Section \ref{ssec:ext_tow_systems}.

Given a $\CCC$-system of $V$-filters $\SS$, we can define a preorder $\leq_\SS$ on the collection $\SS^+ = \bp{A : ~ \exists a \in \CCC ~ A \in I_a^+}$ as in the following.

\begin{definition}
	Given $A \in I_a^+$, $B \in I_b^+$ we say that $A \leq_{\SS} B$ iff $\pi^{-1}_{ca}[A] \leq_{I_c} \pi^{-1}_{cb}[B]$ where $c = a \cup b$, and $A =_{\SS} B$ iff  $A \leq_{\SS} B$ and  $B \leq_{\SS} A$.
\end{definition}

Consider the quotient $\SS^+/=_\SS$. With an abuse of notation for $p, q \in \SS^+/=_{\SS}$, we let $p \leq_{\SS} q$  iff $A \leq_\SS B$ for any (some) $A \in p$, $B \in q$. The partial order $\ap{\SS^+/=_\SS, \leq_\SS}$ is a boolean algebra which is the limit of a directed system of boolean algebras, and can be used as a forcing notion in order to turn $\SS$ into a system of ultrafilters. This process will be described in Section \ref{ssec:filters_generic}.

\begin{proposition}
	Let $\CCC$ be a ${<}\gamma$-directed set of domains, $\SS$ be a $\CCC$-system of filters. Then $\ap{\SS^+/=_\SS, \leq_\SS}$ forms a ${<}\gamma$-closed boolean algebra.
\end{proposition}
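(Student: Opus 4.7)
The plan is to verify two things: that $\ap{\SS^+/=_\SS, \leq_\SS}$ is a Boolean algebra, and that it is ${<}\gamma$-closed.

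\textbf{Boolean algebra.} Given $[A], [B] \in \SS^+/=_\SS$ with representatives $A \in I_a^+$ and $B \in I_b^+$, the ${<}\gamma$-directedness of $\CCC$ (in particular, closure under binary unions) yields $c = a \cup b \in \CCC$. I define
\[
[A] \wedge [B] := [\pi_{ca}^{-1}[A] \cap \pi_{cb}^{-1}[B]], \quad [A] \vee [B] := [\pi_{ca}^{-1}[A] \cup \pi_{cb}^{-1}[B]], \quad \neg [A] := [O_a \setminus A].
\]
By the compatibility axiom of Definition~\ref{def:csystem}, these are well defined on $=_\SS$-equivalence classes, and the Boolean laws are inherited from each $\PPP^V(O_c)/I_c$.

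\textbf{${<}\gamma$-closure.} Let $\langle [A_\alpha] : \alpha < \delta \rangle$ with $\delta < \gamma$ be a descending sequence and pick representatives $A_\alpha \in I_{a_\alpha}^+$. By ${<}\gamma$-directedness of $\CCC$ applied to $\{a_\alpha : \alpha < \delta\} \cup \{\{\alpha\} : \alpha < \delta\}$ (each singleton $\{\alpha\}$ belongs to $\CCC$ by transitivity of $\bigcup\CCC$ once some domain of sufficient rank is present), I obtain $c \in \CCC$ with $c \supseteq \bigcup_\alpha a_\alpha \cup \delta$. Let $B_\alpha := \pi_{c, a_\alpha}^{-1}[A_\alpha] \in I_c^+$, so $[B_\alpha] = [A_\alpha]$. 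My candidate lower bound is
\[
B := \{ f \in O_c : \forall \alpha \in \delta \cap \dom(f),\ f \res a_\alpha \in A_\alpha \}.
\]
That $[B] \leq [A_\alpha]$ for each $\alpha < \delta$ will follow from fineness: $\{f \in O_c : \alpha \in \dom(f)\} \in F_c$, so $B \setminus B_\alpha$ is contained in the $I_c$-negligible set $\{f : \alpha \notin \dom(f)\}$.

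The main obstacle is to establish $B \in I_c^+$. I would argue by contradiction: assuming $B \in I_c$ so that $O_c \setminus B \in F_c$, I define $u: O_c \setminus B \to V$ by $u(f) := f(\alpha_f)$, where $\alpha_f \in \delta \cap \dom(f)$ is minimal with $f \res a_{\alpha_f} \notin A_{\alpha_f}$. Then $u$ is regressive (with witness $x_f = \alpha_f$, since $u(f) = f(x_f)$). Normality produces $b \in \CCC$ extending $c$, a set $B' \in I_b^+$ refining $\pi_{bc}^{-1}[O_c \setminus B]$, and a fixed $y \in b$ such that $u(\pi_{bc}(g)) = g(y)$ for all $g \in B'$. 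Since the Mostowski collapse is injective on its extensional domain, $y$ is pinned to a fixed ordinal $\alpha^* < \delta$, and consequently $B' \subseteq \pi_{b, a_{\alpha^*}}^{-1}[O_{a_{\alpha^*}} \setminus A_{\alpha^*}]$. A concluding use of the descending property of $\langle A_\alpha \rangle$ --- namely that $A_{\alpha^*+1} \leq_\SS A_{\alpha^*}$, projected via $\pi_{b, a_{\alpha^*+1}}^{-1}$ to the common domain $b$ --- then places $B'$ in the $I_b$-complement of a positive set it is forced to meet, yielding the required contradiction. This positivity step is the main technical hurdle and is precisely where all four axioms of Definition~\ref{def:csystem} (and the fact that $\delta \subseteq c$, absorbed into the construction via directedness) come into play simultaneously.
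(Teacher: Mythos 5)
Your proof founders at exactly the step you flag as ``the main technical hurdle'', and that step cannot be repaired. After applying normality to $u$ on $O_c \setminus B$ you obtain $B' \in I_b^+$ with $B' \subseteq \pi_{bc}^{-1}[O_c \setminus B]$ and $B' \cap \pi_{b a_{\alpha^*}}^{-1}[A_{\alpha^*}] = \emptyset$. To extract a contradiction you would need $B' \leq_\SS A_{\alpha^*}$; but $B'$ refines only the complement of your diagonal set $B$, which is not below any $A_\alpha$, and the descending property gives nothing (a set essentially disjoint from $A_{\alpha^*}$ is also essentially disjoint from every later $A_\beta$). All you have actually established is that $O_{a_{\alpha^*}} \setminus A_{\alpha^*} \in I_{a_{\alpha^*}}^+$, which is no contradiction since $A_{\alpha^*}$ is merely positive, not in $F_{a_{\alpha^*}}$. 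Worse, the conclusion you are aiming for is false in general: for the tower of non-stationary ideals of regular uncountable height $\lambda$ (where $V_\lambda$ \emph{is} ${<}\lambda$-directed), a decreasing sequence $A_n = \bp{f \in O_a : \dom(f) \cap \omega_1 \in T_n}$ with $T_n \subseteq \omega_1$ stationary, decreasing and $\bigcap_n T_n = \emptyset$ consists of positive conditions with no positive lower bound (by countable completeness of $\NS$), so the quotient is not even countably closed \emph{as a forcing notion}. Two smaller issues: your appeal to transitivity to get $\bp{\alpha} \in \CCC$ for all $\alpha < \delta$ requires $\delta \subseteq \bigcup \CCC$, which the axioms do not guarantee (the paper instead indexes the diagonal by arbitrary elements $x_\alpha$ of a common domain $a$ with $\vp{a} \geq \delta$); and your quotient must have $\0$ adjoined for the meets you define to make sense.

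What the paper actually proves --- and what the proposition must be read as asserting --- is ${<}\gamma$-\emph{completeness} of the algebra: every family $\ap{A_\alpha : \alpha < \mu}$, $\mu < \gamma$, of positive sets has an infimum in $\SS^+/=_\SS$, namely $[B]$, which may perfectly well be $\0$. The paper's normality argument is therefore run not on $O_c \setminus B$ but on a hypothetical $C \in I_c^+$ satisfying \emph{both} $C \leq_\SS A_\alpha$ for all $\alpha$ \emph{and} $C \cap \pi_c^{-1}[B] = \emptyset$; the guessed positive set $D$ then satisfies $D \leq_\SS C \leq_\SS A_{\bar\alpha}$ and $D \cap \pi^{-1}[A_{\bar\alpha}] = \emptyset$ simultaneously, which is a genuine contradiction. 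This shows that every positive set below all the $A_\alpha$ lies below $[B]$, i.e.\ $[B] = \bigwedge_\alpha [A_\alpha]$. The hypothesis $C \leq_\SS A_{\bar\alpha}$, absent from your setup, is precisely the ingredient that makes the contradiction close; without it the normality argument proves nothing about the positivity of $B$, and indeed $B$ need not be positive.
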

\begin{proof}
	Let $\AAA = \ap{A_\alpha : ~ \alpha < \mu} \subseteq \SS^+$ be such that $\mu < \gamma$ with $A_\alpha\in I_{a_\alpha}^+$. Since $\CCC$ is ${<}\gamma$-directed, there is a domain $a \in \CCC$ with $\vp{a} \geq \mu$ such that $a_\alpha \subseteq a$ for all $\alpha < \mu$. Fix $\ap{x_\alpha : ~ \alpha < \mu}$ a (partial) enumeration of $a$, and define 
	\[
	B = \bp{f \in O_a : ~ \forall \alpha < \mu ~ x_\alpha \in \dom(f) \Rightarrow f \in \pi^{-1}_a[A_\alpha]}.
	\]

	First, $B <_\SS A_\alpha$ for all $\alpha < \mu$ by fineness, since $\bp{f \in B : ~ x_\alpha \in \dom(f)} \subseteq \pi^{-1}_{a}[A_\alpha]$. Suppose now by contradiction that for some $c\supseteq a$, $C \in I_c^+$ is such that $C \leq_\SS A_\alpha$ for all $\alpha < \mu$ and $C \cap \pi_c^{-1}[B] = \emptyset$. Then for any $f \in C$ we can find an $\alpha_f < \mu$ such that $x_{\alpha_f} \in \dom(f)$ and $f \notin \pi^{-1}_c[A_{\alpha_f}]$. Define
	\[
	\begin{array}{rrcl}
		u: & C & \longrightarrow & V \\
		   & f & \longmapsto & f(x_{\alpha_f})
	\end{array}
	\]
	By normality we can find a single $\bar{\alpha}$ and a $d\supseteq c\cup\bp{\bar{\alpha}}$ such that 
	\[
	D = \bp{f \in \pi^{-1}_d[C]: ~ u(\pi_d(f)) = f(x_{\bar{\alpha}})} \in I_d^+.
	\] 
	Thus $D \cap A_{\bar{\alpha}} = \emptyset$ and $D \leq_\SS C \leq_\SS A_{\bar{\alpha}}$, a contradiction.
\end{proof}


\subsection{Standard extenders and towers as $\CCC$-systems of filters} \label{ssec:ext_tow_systems}

\paragraph{Extenders.} We now compare the definition of $\ap{\kappa,\lambda}$-extender just introduced (Definition \ref{def:tow_ext}) with the definition of standard $\ap{\kappa,\lambda}$-extender (Definition \ref{def:standardext}). 

Let $\EE$ be a $\ap{\kappa,\lambda}$-extender with supports $\ap{\kappa_a : a \in [\lambda]^{{<}\omega}}$ according to Definition \ref{def:tow_ext}. Notice that given any $a \in \qp{\lambda}^{{<}\omega}$, the collection 
\[
O'_a = \bp{f \in O_a : ~ \dom(f) = a \wedge \ran(f) \subseteq \kappa_a}
\] 
is in $F_a$ by fineness and definition of $\kappa_a$. Consider the injective map $\ran_a : O'_a \to \qp{\kappa_a}^{\vp{a}}$, which maps $F_a$ into a corresponding filter $F'_a$ on $\qp{\kappa_a}^{\vp{a}}$ that is the closure under supersets of $\bp{\ran_a[A \cap O'_a] : ~ A \in F_a}$. Notice that many sequences $s \in \qp{\kappa_a}^{\vp{a}}$ cannot be obtained as the range of Mostowski collapse maps, e.g. $s = \bp{\beta,\beta+2}$ whenever $a$ is of the kind $\bp{\alpha, \alpha+1}$.

Let us denote with $\pi'_{ba}$ the projection map from $F'_b$ to $F'_a$ in the standard case. Notice that for any $a \subseteq b \in  [\lambda]^{{<}\omega}$ and $f\in O_b$, $\ran_a(\pi_{ba}(f)) = \pi'_{ba}(\ran_b(f))$.

Define $\EE' = \bp{F'_a: a \in  \qp{\lambda}^{{<}\omega}}$. We claim that $\EE'$ is a $\ap{\kappa,\lambda}$-extender with respect to the standard definition whenever $\EE$ is a $\ap{\kappa,\lambda}$-extender.

\begin{proposition} \label{prop:std_ext_1}
	If $\EE$ is a $\ap{\kappa, \lambda}$-extender then $\EE'$ is a standard $\ap{\kappa,\lambda}$-extender.
\end{proposition}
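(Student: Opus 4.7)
The plan is to verify each of the four axioms of Definition~\ref{def:standardext} for $\EE'$ by translating the corresponding conditions from Definition~\ref{def:tow_ext} via the map $\ran_a$. Two preliminary facts drive the translation: first, $\ran_a$ restricted to $O'_a$ is injective, since two $\in$-preserving (hence order-preserving) injections from the finite linearly ordered set $a$ with the same range must coincide; second, $O'_a \in F_a$, by combining fineness applied to each of the finitely many elements of $a$ with the definition of $\kappa_a$. Throughout I use the commutation identity $\ran_a \circ \pi_{ba} = \pi'_{ba} \circ \ran_b$ already established in the excerpt.

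With these in hand, the filter property is essentially transport of structure: the pushforward $F'_a$ is a filter on $[\kappa_a]^{\vp{a}}$, Proposition~\ref{prop:completeness} provides ${<}\kappa$-completeness, and the characterization of $\kappa_a$ as the least $\xi$ with $[\xi]^{\vp{a}} \in F'_a$ is just a reformulation of the minimality property defining the support. Compatibility (c) follows from the compatibility axiom of Definition~\ref{def:csystem} via the commutation identity. Compatibility (a) and (b) both reduce to Proposition~\ref{prop:support_monotonicity}: for finite sets of ordinals $a \subseteq b$ one has $\rank(a) \leq \rank(b)$, with equality precisely when $\max(a) = \max(b)$.

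Uniformity requires a small but interesting argument. The bound $\kappa_{\{\kappa\}} \leq \kappa$ is built into the definition of $\ap{\kappa,\lambda}$-system since $\{\kappa\} \in V_{\kappa+2}$. For the reverse, suppose toward a contradiction that $\xi := \kappa_{\{\kappa\}} < \kappa$; then $A := \{f \in O_{\{\kappa\}} : f(\kappa) < \xi\} \in F_{\{\kappa\}}$. Take $b := \{\xi,\kappa\}$; by compatibility $\pi^{-1}_{b,\{\kappa\}}[A] \in F_b$, and by the principal-filter assumption on $F_{\{\xi\}}$ combined with compatibility, $\pi^{-1}_{b,\{\xi\}}[\{\id\res\{\xi\}\}] \in F_b$ as well. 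Any $f$ in the intersection would satisfy $f(\xi)=\xi$ and $f(\kappa)<\xi$, contradicting $\in$-preservation of $f$ on the ordinals $\xi < \kappa$.

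Normality will be the technical heart of the proof, and I expect it to be the main obstacle. Given $A$ positive for $F'_a$ and a regressive $u:A\to\kappa_a$ in the standard sense with $u(s) \in s_i$, I would lift to $u'(f) := u(\ran(f))$ on $A' := \ran_a^{-1}[A] \cap O'_a$; this $u'$ is regressive in the sense of Definition~\ref{def:csystem} with witness $x_f = \alpha_i$. Invoking the new normality yields $b \supseteq a$, $B' \in I_b^+$ with $B' \leq_\SS A'$, and $y \in b$ such that $u'(\pi_{ba}(f)) = f(y)$ for all $f \in B'$. The crucial observation is that $f(y) = u(\ran(\pi_{ba}(f))) \in f(\alpha_i)$ forces $y < \alpha_i$ by $\in$-preservation, so $\beta := y$ lies in $a_i$. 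Transporting $B'$ back via $\ran_b$ produces the required set $B$. The principal technical nuisance I anticipate is verifying that the $\ran$-pushforward carries $\leq_\SS$ to $\leq_\EE$ correctly; this requires tracking that the complements of $O'_a$ and $O'_b$ lie in $I_a$ and $I_b$ respectively by fineness, so that passage between the two quotients is transparent.
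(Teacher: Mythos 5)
Your proposal is correct and follows essentially the same route as the paper's proof: transport of structure along $\ran_a$ for the filter property, Proposition~\ref{prop:support_monotonicity} for compatibility (a)--(b), the commutation $\ran_a\circ\pi_{ba}=\pi'_{ba}\circ\ran_b$ for (c), and for normality the lift $v(f)=u(\ran_a(f))$ followed by the $\in$-preservation argument giving $\beta\in a_i$. The only addition is your explicit argument that $\kappa_{\bp{\kappa}}\geq\kappa$ (via principality of $F_{\bp{\xi}}$ for $\xi<\kappa$), a point the paper dispatches with ``by definition of $\ap{\kappa,\lambda}$-system of filters''; your argument is a correct and reasonable way to fill in that detail.
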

\begin{proof}
	\begin{enumerate}
	\item (Filter property) It follows since $F'_a$ is an injective image of $F_a \res O'_a$.
	\item (Compatibility)
	\begin{enumerate}
		\item[(a-b)] Follow by Proposition \ref{prop:support_monotonicity}, since $\rank(a)$ depends only on $\max(a)$.
		\item[(c)] By compatibility of $\EE$, it follows: $A'= \ran_a[A] \in F'_a$ iff  $A \in F_a$ iff $\pi^{-1}_{ba}[A] \in F_b$ iff $\ran_b[\pi^{-1}_{ba}[A]] = {\pi'}^{-1}_{ba}[A] \in F_b'$.
	\end{enumerate}
	\item (Uniformity) By definition of $\ap{\kappa, \lambda}$-system of filters.
	\item (Normality) Given $a \in[\lambda]^{{<}\omega}$, $A'=\ran_a[A] \in {I'}_a^+$, $u:A' \to \kappa_a$, $i < |a|$ such that $u(s) \in s(i)$ for all $s \in A'$, let $\alpha= a_i$. Define 
	\[
	\begin{array}{rrcl}
		v: & A & \longrightarrow  & V \\
		   & f & \longmapsto & u(\ran_a(f)).
	\end{array}
	\] 
	Since $s_i = f(\alpha)$, $v$ is regressive. By normality of $\EE$, there exist $\beta$ and $B \subseteq \pi^{-1}_{ba}[A]$, where $\beta \in b \supseteq a$ such that for all $f \in B$, $v(\pi_{ba}(f))= f(\beta)$. Since $\pi_{ba}(f)\in A$, $f(\beta) = v(\pi_{ba}(f)) \in f(\alpha)$. Since $f$ is ${\in}$-preserving, $\beta \in \alpha$ and $B' = \ran_b[B]$ witnesses normality of $\EE'$. \qedhere
	\end{enumerate}
\end{proof}

On the other hand, given a standard $\ap{\kappa, \lambda}$-extender $\EE'$ we can define a collection of corresponding filters $F_a$ on $O_a'$ for any $a \in [\lambda]^{{<}\omega}$. This can be achieved since $\ran_a[O'_a] \in F'_a$ for any $a \in [\lambda]^{{<}\omega}$ and standard $\ap{\kappa, \lambda}$-extender $\EE'$.\footnote{This fact can be proved directly using the corresponding version of Proposition \ref{prop:induced_system} (i.e. $A \in F'_a$ iff $a \in j(A)$) and \L o\'s Theorem \ref{thm:los} for standard extenders.} Let $\EE$ consists of the closure of $F_a$ under supersets in $O_a$, for any $a \in [\lambda]^{{<}\omega}$. Then we can show the following.

\begin{proposition} \label{prop:std_ext_2}
	If $\EE'$ is a standard $\ap{\kappa, \lambda}$-extender then  $\EE$ is a $\ap{\kappa,\lambda}$-extender.
\end{proposition}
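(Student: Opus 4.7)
The plan is to define $F_a$ on $O_a$ as the $O_a$-superset closure of $\bp{A \subseteq O'_a : \ran_a[A] \in F'_a}$ and to verify the four clauses of Definition \ref{def:csystem} together with the three additional clauses that characterise $\ap{\kappa,\lambda}$-systems of filters. The footnote in the preceding discussion guarantees $\ran_a[O'_a] \in F'_a$, so this pull-back is a well-defined proper filter with $O'_a \in F_a$, and both the filter property and nontriviality transport cleanly through the injection $\ran_a: O'_a \to \qp{\kappa_a}^{\vp{a}}$. Fineness is immediate since for any $x \in a$, the set $\bp{f \in O_a : x \in \dom(f)}$ contains $O'_a$. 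For compatibility, when $a \subseteq b$ I would combine the standard compatibility clause 2(c) with the identity $\ran_a \circ \pi_{ba} = \pi'_{ba} \circ \ran_b$ established in the paragraph before Proposition \ref{prop:std_ext_1}, together with $\ran_b[O'_b] \in F'_b$, to obtain $A \in F_a$ iff $\pi^{-1}_{ba}[A] \in F_b$ by a direct diagram chase.

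The main step is normality. Suppose $u: A \to V$ is regressive on $A \in I_a^+$. After intersecting with $O'_a \in F_a$ I may assume $A \subseteq O'_a$. Each $f \in A$ admits $x_f \in a$ with $u(f) \trianglelefteq f(x_f)$; since $a$ is finite, by finite additivity of $I_a$ some fixed $\alpha \in a$ works on an $I_a$-positive subset $A_\alpha$. If the further subset on which $u(f) = f(\alpha)$ is $I_a$-positive, then $u$ is already guessed with $y = \alpha$ and $b = a$; otherwise I may assume $u(f) \in f(\alpha)$ throughout $A_\alpha$. Setting $A' = \ran_a[A_\alpha]$ and $u'(s) = u(\ran_a^{-1}(s))$, the function $u'$ is regressive in the standard sense with $u'(s) \in s_i$ for $i$ the position of $\alpha$ in $a$. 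Standard normality of $\EE'$ then furnishes $\beta \in \alpha$, $b \supseteq a \cup \bp{\beta}$, and $B' \in (I'_b)^+$ with $B' \leq_{\EE'} A'$ such that $u'(\pi'_{ba}(s)) = s_j$ where $b_j = \beta$. Pulling back, $B = \ran_b^{-1}[B'] \cap O'_b$ lies in $I_b^+$ with $B \leq_\EE \pi^{-1}_{ba}[A]$; applying the commutation identity, $u(\pi_{ba}(f)) = u'(\pi'_{ba}(\ran_b(f))) = \ran_b(f)_j = f(\beta)$, so $u$ is guessed on $B$ with witness $y = \beta \in b$.

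Finally, the three $\ap{\kappa,\lambda}$-clauses are verified separately. The length condition is immediate from $\CCC = \qp{\lambda}^{{<}\omega}$ and $\bigcup\CCC = \lambda \supseteq \kappa$. To see that $F_{\bp{\gamma}}$ is principal generated by $\id\res\bp{\gamma}$ for $\gamma < \kappa$, it suffices to show that $F'_{\bp{\gamma}}$ concentrates on $\bp{\gamma}$, since $\ran_{\bp{\gamma}}(\id\res\bp{\gamma}) = \bp{\gamma}$; this I would extract by applying standard normality of $\EE'$ to the constant regressive function $s \mapsto \gamma$ on the set $\kappa\setminus(\gamma+1) \in F'_{\bp{\kappa}}$ (positive by uniformity and $<\kappa$-completeness), then using the compatibility axiom together with the fact that the maximum of $b$ remains $\kappa$ to transfer the resulting concentration back to $F'_{\bp{\gamma}}$ and deduce in particular $\kappa_{\bp{\gamma}} = \gamma+1$. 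The remaining bound $\kappa_a \leq \kappa$ for $a \in V_{\kappa+2}$ then follows by Proposition \ref{prop:support_monotonicity}, since such $a$ has $\max(a) \leq \kappa$ and hence $\rank(a) \leq \rank(\bp{\kappa})$, giving $\kappa_a \leq \kappa_{\bp{\kappa}} = \kappa$ by uniformity. The hardest step is the normality translation, since one must simultaneously coordinate the pigeonhole reduction to a single regressivity witness $\alpha$, the dichotomy between $u(f) = f(\alpha)$ and $u(f) \in f(\alpha)$, and the precise interplay between $\pi_{ba}$ and $\pi'_{ba}$ under $\ran$.
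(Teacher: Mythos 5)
For the four clauses of Definition \ref{def:csystem} your argument is correct and essentially identical to the paper's: the same pull-back of $F'_a$ along $\ran_a$, the same use of the commutation identity $\ran_a\circ\pi_{ba}=\pi'_{ba}\circ\ran_b$ for compatibility, the same observation that the fineness set contains $O'_a$, and for normality the same finite pigeonhole on $a$ followed by the same dichotomy between the case $u(f)=f(\alpha)$ on a positive set (already guessed with $b=a$) and the case $u(f)\in f(\alpha)$ (handed to standard normality of $\EE'$ and pulled back along $\ran_b$). The paper's proof stops there and does not explicitly check the three extra clauses of a $\ap{\kappa,\lambda}$-system of filters.

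Your attempt to verify those extra clauses is therefore a genuine addition, but the step establishing that $F_{\bp{\gamma}}$ is principal for $\gamma<\kappa$ does not work as stated. Applying standard normality to the constant map $s\mapsto\gamma$ on the positive set $\bp{s : s_0>\gamma}$ produces some $\beta$, some $b\ni\beta$, and a set $B\in(I'_b)^+$ whose elements have $\beta$-th coordinate equal to $\gamma$. Via compatibility this only shows that the singleton $\bp{\bp{\gamma}}$ is $(I'_{\bp{\beta}})^+$-\emph{positive}: you get positivity rather than membership in the filter, and for the index $\bp{\beta}$ handed to you by normality rather than for $\bp{\gamma}$. Neither defect can be repaired by the compatibility axiom, which transfers positivity and filter membership between coherent indices but cannot upgrade a positive set to a filter set, nor move the statement from $F'_{\bp{\beta}}$ to $F'_{\bp{\gamma}}$. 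The clean route is the one indicated in the paper's footnote: establish the seed identity $A\in F'_a$ iff $\1$ forces $a\in j(A)$ for the generic ultrapower $j$ of $\EE'$ (the standard-extender analogue of Propositions \ref{prop:rappcanonici} and \ref{prop:induced_system}, which uses the full normality machinery and not a single application of the axiom); combined with ${<}\kappa$-completeness, which gives $j(\gamma)=\gamma$ for $\gamma<\kappa$ with boolean value $\1$, this yields $A\in F'_{\bp{\gamma}}$ iff $\bp{\gamma}\in A$, hence principality and $\kappa_{\bp{\gamma}}=\gamma+1$. Your derivation of $\kappa_a\le\kappa$ for $a\in V_{\kappa+2}$ from Proposition \ref{prop:support_monotonicity} together with uniformity is fine once that point is settled.
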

\begin{proof}
	\begin{enumerate}
	\item (Filter property) Follows directly from the filter property of $\EE'$.
	\item (Compatibility) By compatibility of $\EE'$ and unfolding definitions, $A \in F_a$ iff $A \cap O'_a \in F_a$ iff  $\ran_a[A \cap O'_a] \in F'_a$ iff $ \ran_b[\pi^{-1}_{ba}[A \cap O'_a]] \in F_b'$ iff $\pi^{-1}_{ba}[A \cap O'_a] \in F_b$ iff $\pi^{-1}_{ba}[A] \in F_b$.
	\item (Fineness) For any $x \in a$, $\bp{f \in O_a : ~ x \in \dom(f)} \supseteq O'_a \in F_a$.
	\item (Normality) Assume that $A \in I_a^+$ and that $u : A \to V$ is regressive. By definition of regressive function we have that $A= A_0 \cup A_1$, where  
	\[
	A_0 =\bp{f \in A : \exists x \in \dom_a(f)(u(f) = f(x))};
	\]
	\[
	A_1=\bp{f \in A : \exists x \in \dom_a(f)(u(f) \in f(x))}.
	\]
	We have two cases.
	\begin{itemize}
		\item If $A_0  \in I_a^+$ there exists a fixed $x \in a$ such that $B = \bp{f \in A : u(f) = f(x)}$ is in $I_a^+$ since $a$ is finite. Hence $B$ and $x$ witness normality for $\EE$. 
		\item Otherwise, if $A_0 \in I_a$, we have that $A_1 \in I_a^+$. Since $a$ is finite, there exists $x = a_i \in a$ such that $A^*= \bp{f \in A : u(f) \in f(x)} \in I_a^+$. Let $A'= \ran_a[A^* \cap O'_a]$. Let $v: A'  \to  \kappa_a$ be such that $v(s) = u(f)$ for any $s = \ran_a(f)$, so that $v(s) \in s_i$. By normality of $\EE'$ there exist $\beta \in a_i$ and  $B' <_{\EE'} A'$ with $B' = \ran_b[B] \in {I'}_b^+$ for $b = a \cup \bp{\beta}$ such that for all $s \in B'$, $v(\pi'_{ba}(s)) = s_j$, where $b_j = \beta$. Hence for any $f \in B$, $u(\pi_{ba}(f))=f(\beta)$. \qedhere
	\end{itemize}
	\end{enumerate}
\end{proof}

\paragraph{Towers.} We now compare the definition of tower just introduced (Def. \ref{def:tow_ext}) with the definition of standard tower (Def. \ref{def:standardtow}). Let $\TT$ be a tower of length $\lambda$. Notice that the whole tower can be induced from the filters $F_a$ where $a$ is a transitive set. Furthermore, whenever $a$ is transitive the map $\dom_a : O_a \to \PPP(a)$ is a bijection. In fact, any $f \in O_a$ with $\dom(f) = X$ has to be $f = \pi_X$. Thus we can map any $F_a$ with $a$ transitive into an isomorphic filter $F'_a = \bp{\dom_a[A] : ~ A \in F_a}$ on $\PPP(a)$. Define $\TT' = \bp{F'_a: ~ a \in  V_\lambda}$, then we can prove the following.

\begin{proposition}
	If $\TT$ is a tower of length $\lambda$ then $\TT'$ is a standard tower of length $\lambda$.
\end{proposition}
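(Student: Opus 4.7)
The plan is to verify the four conditions of Definition~\ref{def:standardtow} for $\TT' = \{F'_a : a \in V_\lambda\}$, by pulling back each axiom through the map $\dom_a$ from the corresponding axiom of the $V_\lambda$-system $\TT$ given by Definition~\ref{def:csystem}. Since $\rank(\trcl(a)) = \rank(a)$, we have $\trcl(a) \in V_\lambda$ whenever $a \in V_\lambda$, so $F'_a$ for non-transitive $a$ is determined from $F'_{\trcl(a)}$ via compatibility (which is itself to be verified first). Hence it essentially suffices to check the axioms when $a$ is transitive, in which case $\dom_a : O_a \to \PPP(a)$ is a bijection sending each extensional $M \subseteq a$ to itself.

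The filter property is immediate, and fineness is a direct translation: the set $\{f \in O_a : x \in \dom f\} \in F_a$ is contained in $\dom_a^{-1}[\{X \subseteq a : x \in X\}]$, so the latter has $F'_a$-measure one. For compatibility, the key observation is that whenever $a \subseteq b$ are transitive in $V_\lambda$ and $f = \pi_M \in O_b$, the restriction $f \res a$ equals the Mostowski collapse $\pi_{M \cap a}$ of $M \cap a$; this follows by $\in$-induction using that $x \in M \cap a$ and $y \in x$ force $y \in a$ by transitivity. Consequently $\dom_b[\pi^{-1}_{ba}[A_0]] = \dom_a[A_0] \uparrow b$, which translates the compatibility of $\TT$ directly into the ``$\uparrow b$'' compatibility of $\TT'$.

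The main work is normality. Given $A \in I'^+_a$ with $a$ transitive and $u : A \to V$ with $u(X) \in X$ for all $X \in A$, define $v : \dom_a^{-1}[A] \to V$ by setting $v(f) = f(u(\dom f))$. Then $v$ is regressive in the sense of Definition~\ref{def:csystem}, since $u(\dom f) \in \dom f$. Applying normality of $\TT$ produces $b \supseteq a$, $B_0 \in I_b^+$ with $B_0 \subseteq \pi_{ba}^{-1}[\dom_a^{-1}[A]]$, and $y \in b$ such that $v(\pi_{ba}(f)) = f(y)$ for all $f \in B_0$. Unfolding via the compatibility observation, $v(\pi_{ba}(f)) = (\pi_{ba}f)(u(\dom(\pi_{ba}f))) = f(u(\dom f \cap a))$, and injectivity of $f$ (Mostowski collapses are $\in$-isomorphisms, hence injective) forces $u(\dom f \cap a) = y$ throughout $B_0$. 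Taking $B = \dom_b[B_0] \in I'^+_b$ gives $B \subseteq A \uparrow b$ together with $u(X \cap a) = y$ for all $X \in B$, as required.

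The principal obstacle is the compatibility step, specifically establishing that restriction of a Mostowski collapse coincides with the Mostowski collapse of the restriction; this rests essentially on the transitivity of $a$ and is what makes the whole translation between $O_a$ and $\PPP(a)$ functorial. Every other step is a routine pullback through $\dom_a$.
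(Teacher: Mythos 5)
Your proof is correct and follows exactly the strategy the paper intends: the paper's own ``proof'' merely states that the argument mirrors Propositions~\ref{prop:std_ext_1} and~\ref{prop:std_ext_2} and leaves the details to the reader, and what you have written is precisely that translation of each axiom through the bijection $\dom_a$, with the key supporting fact $\pi_M \res a = \pi_{M\cap a}$ for transitive $a$ (the tower analogue of $\ran_a(\pi_{ba}(f)) = \pi'_{ba}(\ran_b(f))$) correctly identified and justified by $\in$-induction. No gaps; your normality argument via the regressive map $f \mapsto f(u(\dom f))$ and injectivity of the collapse is exactly the intended one.
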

\begin{proof}
	The proof follows the same strategies used in Propositions \ref{prop:std_ext_1}, \ref{prop:std_ext_2} and is left to the reader.
\end{proof}


\subsection{Systems of filters in $V$ and generic systems of ultrafilters} \label{ssec:filters_generic}

In this section we shall focus on ideal extenders and ideal towers in $V$, and their relationship with the corresponding generic systems of ultrafilters. This relation will expand from the following bidirectional procedure, mapping a $V$-ultrafilter in a generic extension with an ideal in $V$ and viceversa. Full references on this procedure can be found in \cite{foreman:generic_embeddings}.

\begin{definition}
	Let $\dot{F}$ be a $\BB$-name for an ultrafilter on $\PPP^V(X)$. Let $\ii(\dot{F})\in V$ be the ideal on $\PPP^V(X)$ defined by:
	\[
	\ii(\dot{F}) = \bp{Y \subset X: ~ \Qp{\check{Y} \in \dot{F}}_\BB = \0}
	\]

	Conversely, let $I$ be an ideal in $V$ on $\PPP(X)$ and consider the poset $\CC = \PPP(X) / I$. Let $\dot{\ff}(I)$ be the $\CC$-name for the $V$-generic ultrafilter for $\PPP^V(X)$ defined by:
	\[
	\dot{\ff}(I) = \bp{\ap{\check{Y}, \qp{Y}_I} : ~ Y \subseteq X}
	\]
\end{definition}

Notice that $\ii(\dot{\ff}(I)) = I$, while the $\BB$-name $\dot{F}$ and the $\CC$-name $\dot{\ff}(\ii(\dot{F}))$ might be totally unrelated (since $\CC = \PPP(X)/\ii(\dot{F})$ does not necessarily embeds completely into $\BB$). We refer to Theorem \ref{thm:failure} and subsequent corollary for an example of this behavior.

\begin{definition}
	Let $\dot{F}$ be a $\BB$-name for an ultrafilter on $\PPP^V(X)$. Set $\CC = \PPP(X) / \ii(\dot{F})$. The \emph{immersion} map $i_{\dot{F}}$ is defined as follows:
	\[
	\begin{array}{rrcl}
		i_{\dot{F}}: &\CC & \longrightarrow &\BB \\
		&[A]_{\ii(\dot{F})} & \longmapsto &\Qp{\check{A} \in \dot{F}}_\BB
	\end{array}
	\]
\end{definition}

\begin{proposition}
	Let $\dot{F}$, $i_{\dot{F}}$ be as in the previous definition. Then $i_{\dot{F}}$ is a (not necessarily complete) morphism of boolean algebras.
\end{proposition}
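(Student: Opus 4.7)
The plan is to verify, in this order, that $i_{\dot F}$ is well-defined on $\CC = \PPP(X)/\ii(\dot F)$, preserves $\0,\1$, and respects finite meets and complements; preservation of finite joins then follows by De Morgan. The single engine behind all four checks is the hypothesis that $\dot F$ is forced by $\1$ to be an ultrafilter on $\check{\PPP^V(X)}$, which translates identities among boolean combinations of $\check A$'s inside $V^\BB$ into identities among their boolean values in $\BB$.

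For well-definedness, if $[A] = [B]$ in $\CC$ then by definition of $\ii(\dot F)$ we have $\Qp{\check{A\triangle B}\in\dot F}_\BB = \0$, i.e.\ $\1 \Vdash \check{A\triangle B}\notin\dot F$, and hence $\1\Vdash\check X\setminus\check{(A\triangle B)}\in\dot F$ by the forced ultrafilter axiom. Since $A$ and $B$ agree on $X\setminus(A\triangle B)$, the usual filter argument inside $V^\BB$ yields $\check A\in\dot F\iff\check B\in\dot F$, and therefore $\Qp{\check A\in\dot F}_\BB = \Qp{\check B\in\dot F}_\BB$.

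For the algebraic part, meet preservation reads
\[
i_{\dot F}([A]\wedge[B]) = \Qp{\check{A\cap B}\in\dot F}_\BB = \Qp{\check A\in\dot F\wedge \check B\in\dot F}_\BB = i_{\dot F}([A])\wedge i_{\dot F}([B]),
\]
using that $\dot F$ is forced to be closed under finite intersections; complement preservation similarly reads $i_{\dot F}(\neg[A]) = \neg i_{\dot F}([A])$, this time invoking the \emph{ultra}filter clause so that in every extension exactly one of $\check A, \check X\setminus\check A$ lies in $\dot F$. The bounds are handled by $i_{\dot F}([\emptyset]) = \Qp{\emptyset\in\dot F}_\BB = \0$ (nontriviality of $\dot F$) and $i_{\dot F}([X]) = \Qp{\check X\in\dot F}_\BB = \1$.

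I do not foresee a serious obstacle, since the entire verification is an unfolding of the definition of $\ii(\dot F)$ against the boolean-valued semantics of the filter and ultrafilter axioms. Completeness is deliberately not claimed and in general fails: preservation of arbitrary suprema $\bigvee_\alpha [A_\alpha]$ would require $\dot F$ to respect unions of set-many members of $V$ sitting outside the forcing language, which is not part of the hypothesis — and this gap is precisely what the subsequent theorem on failure is set up to exploit.
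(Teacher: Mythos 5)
Your proposal is correct and follows essentially the same route as the paper's (much terser) proof: well-definedness is read off from the definition of $\ii(\dot F)$, preservation of meets and bounds comes from the forced filter axioms, and preservation of complements from the forced ultrafilter axiom, with joins handled by De Morgan. The only difference is that you spell out the symmetric-difference argument for well-definedness and the bound computations explicitly, which the paper leaves implicit.
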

\begin{proof}
	By definition of $\ii(\dot{F})$, the morphism is well-defined. Since $\dot{F}$ is a $\BB$-name for a filter $i_{\dot{F}}$ preserves the order of boolean algebras, and since $\dot{F}$ satisfies the ultrafilter property it also preserves complementation.
\end{proof}

The above can be immediately extended to systems of filters, by means of the following.

\begin{definition} \label{def:ideal_generic}
	Let $\dot{\SSS} = \ap{\dot{F}_a : ~ a \in \CCC}$ be a $\BB$-name for a $\CCC$-system of ultrafilters. Then $\ii(\dot{\SSS}) = \ap{I_a = \ii(\dot{F}_a) : ~ a \in \CCC}$ is the corresponding system of filters in $V$.
	Conversely, let $\SS = \ap{I_a : ~ a \in \CCC}$ be a $\CCC$-system of filters in $V$. Then $\dot{\ff}(\SS) = \ap{\dot{F}_a = \dot{\ff}(I_a) : ~ a \in \CCC}$ is the corresponding $\SS$-name for a system of ultrafilters.
\end{definition}

\begin{proposition} \label{prop:ideal_generic}
	Let $\dot{\SSS}$ be a $\BB$-name for a $\CCC$-system of ultrafilters.  Then $\ii(\dot{\SSS})$ is a $\CCC$-system of filters in $V$. Conversely, let $\SS$ be a $\CCC$-system of filters in $V$. Then $\dot{\ff}(\SS)$ is the canonical $\ap{\SS^+, <_\SS}$-name for the $V$-generic filter on $\SS$ and defines a $\CCC$-system of $V$-ultrafilters.
\end{proposition}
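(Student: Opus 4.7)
The plan is to verify both halves of the proposition in turn. Each requires checking the four defining clauses of a $\CCC$-system of filters (filter property, fineness, compatibility, normality), plus the ultrafilter clause in the second direction. Filter, fineness and compatibility are routine in both directions and amount to unfolding the definitions of $\ii$ and $\dot{\ff}$; the real content is concentrated in normality.

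For the first direction, let $\dot{\SSS} = \ap{\dot{F}_a : a \in \CCC}$ and set $I_a = \ii(\dot{F}_a)$. Filter, fineness and compatibility follow by translating each statement forced with value $\1$ by $\dot{\SSS}$ into the corresponding equality of ideals; for instance compatibility reduces to $A \in I_a \iff \pi_{ba}^{-1}[A] \in I_b$, which holds because $\Qp{\check{A} \in \dot{F}_a}_\BB = \Qp{\check{\pi_{ba}^{-1}[A]} \in \dot{F}_b}_\BB$ by compatibility of $\dot{\SSS}$. For normality, let $u \in V$ be regressive on $A \in I_a^+$, so $p = \Qp{\check{A} \in \dot{F}_a}_\BB > \0$. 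Pick any $G$ generic for $\BB$ with $p \in G$; in $V[G]$, apply normality of $\dot{\SSS}^G$ to $u$ to obtain $b \in \CCC$, $B \in (I_b^{V[G]})^+$ with $B \subseteq \pi_{ba}^{-1}[A]$, and $y \in b$ such that $u(\pi_{ba}(f)) = f(y)$ on $B$. The superset $\tilde{B} = \bp{f \in \pi_{ba}^{-1}[A] : u(\pi_{ba}(f)) = f(y)}$ is $V$-definable, contains $B$, hence is still positive in $V[G]$; since $\dot{F}_b$ is forced to be an ultrafilter, some $q \in G$ forces $\check{\tilde{B}} \in \dot{F}_b$, yielding $\tilde{B} \in \ii(\dot{F}_b)^+$ as the required witness.

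For the converse, I would read each $\dot{F}_a = \dot{\ff}(I_a)$ as an $\ap{\SS^+, \leq_\SS}$-name via the order-preserving injection $\iota_a : [Y]_{I_a} \mapsto [Y]_{=_\SS}$, which is well-defined because computing $\leq_\SS$ at $c = a$ reduces it to $\leq_{I_a}$. In a generic extension $V[G]$ the filter is $\dot{F}_a^G = \bp{Y \subseteq O_a : [Y]_{=_\SS} \in G}$. Filter and ultrafilter follow from genericity together with the density of $\bp{[Z]_{=_\SS} : Z \leq_\SS Y \text{ or } Z \leq_\SS O_a \setminus Y}$: for any $[P]_{=_\SS}$ with $P \in I_c^+$ and $c \supseteq a$, at least one of $P \cap \pi_{ca}^{-1}[Y]$ and $P \cap \pi_{ca}^{-1}[O_a \setminus Y]$ is $I_c$-positive. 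Compatibility is immediate from $[Y]_{=_\SS} = [\pi_{ba}^{-1}[Y]]_{=_\SS}$, and fineness holds because the relevant sets lie in $F_a$, so their class is the top of $\SS^+/=_\SS$ and belongs to every $G$. Normality again becomes a density claim: given $u \in V$ regressive on $A$ with $[A]_{=_\SS} \in G$, below any $[A']_{=_\SS} \leq [A]_{=_\SS}$ I may refine so that $a' \supseteq a$ and $A' \subseteq \pi_{a'a}^{-1}[A]$, then apply normality of $\SS$ in $V$ to the regressive pullback $u' = u \circ \pi_{a'a}$ on $A'$ to obtain $b \supseteq a'$, $B \in I_b^+$ with $B \subseteq \pi_{ba'}^{-1}[A'] \subseteq \pi_{ba}^{-1}[A]$, and $y \in b$ witnessing $u(\pi_{ba}(f)) = f(y)$ on $B$; the class $[B]_{=_\SS}$ lies below $[A']_{=_\SS}$ in the dense set of guessing witnesses, so genericity delivers one in $G$.

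The main obstacle is normality in both directions. The other clauses transfer directly because they are expressible either by boolean-valued statements held with value $\1$ (first direction) or by equalities of classes under $=_\SS$ (second direction); normality instead requires genuine manipulation across the projection maps $\pi_{ba}$ together with, in the first direction, the observation that enlarging a $V[G]$-witness $B$ to a $V$-definable superset $\tilde{B}$ preserves positivity precisely because $\dot{F}_b$ is forced to be an ultrafilter.
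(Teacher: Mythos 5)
Your proof is correct and follows essentially the same route as the paper: both directions reduce the routine clauses to unfolding the definitions of $\ii$ and $\dot{\ff}$, and concentrate the work in normality, which you handle exactly as the paper does --- in the first direction by extracting a witness $B\subseteq\pi_{ba}^{-1}[A]$ with positive boolean value (your passage through a generic $G$ and the definable superset $\tilde{B}$ is just a rephrasing of the paper's direct appeal to the forcing theorem), and in the second by the same density argument below $[A]_{=_\SS}$ applied to the pullback $u\circ\pi_{a'a}$. The only cosmetic imprecision is your remark that positivity of $\tilde{B}\supseteq B$ relies on the ultrafilter property; closure of $I_b^{+}$ under supersets already gives this for any ideal.
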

\begin{proof} $\ii(\dot{\SSS})$ and $\dot{\ff}(\SS)$ satisfy the following properties.
	\begin{enumerate}
		\item \emph{(Filter and ultrafilter property)}, \emph{(Fineness)}, and \emph{(Compatibility)} are left to the reader.

		\item \emph{(Normality)} Let $u : O_a \to V$ in $V$ be regressive on $A$. 
		
		$\ii(\dot{\SSS})$: Suppose that $A \in \ii(\dot{\SSS})^+$ (i.e. $\Qp{\check{A} \in \dot{F}_a} = p > \0$). Then $p \Vdash \check{A} \in \dot{F}_a$ implies that
		\[
		p \Vdash \exists X <_{\dot{\SSS}} \check{A} ~ X \in \dot{\SSS}^+ ~ \exists y \in \check{b} ~ \forall f \in X ~ \check{u}(\pi_{ba}(f)) =  f(y)
		\]
		Thus by the forcing theorem there is a $q < p$, $q > \0$ and fixed $B \subseteq \pi^{-1}_{ba}[A]$, $y \in b$ such that $q$ forces the above formula with the quantified $X$ replaced by $B$. Then $\Qp{\check{B} \in \dot{\SSS}^+} \geq q > \0 \Rightarrow B \in \ii(\dot{\SSS})^+$, and $\forall f \in B ~ u(\pi_{ba}(f)) = f(y)$ holds true in $V$.

		$\dot{\ff}(\SS)$: Consider the system $\dot{\ff}(\SS)$. Suppose that $A\in I_a^+$.  Given any $C \leq_\SS A$ in $I_c^+$, we can find $B \leq_\SS C$ in $\SS^+$ witnessing the normality of $\SS$ for the regressive map on $C$ defined by $h\mapsto u(h\restriction a)$. We conclude that there are densely many $B$ below $A$ such that $\exists y \in b ~ \forall f \in B ~ u(\pi_{ba}(f)) = f(y)$, hence
		\[
		\Qp{\exists B <_{\dot{\ff}(\SS)} \check{A} ~ B \in \dot{\ff}(\SS)^+ ~ \exists y \in \check{b} ~ \forall f \in B ~ \check{u}(\pi_{ba}(f)) =  f(y)} \geq [A]_\SS = \Qp{\check{A} \in \dot{\ff}(\SS)^+}
		\]
	\end{enumerate}
\end{proof}

As already noticed for single filters, the maps $\ii$ and $\dot{\ff}$ are not inverse of each other and $\dot{\ff}(\ii(\dot{\SSS}))$ might differ from $\dot{\SSS}$.


\subsection{Embedding derived from a system of ultrafilters} \label{ssec:ultrapowers}

We now introduce a notion of \emph{ultrapower} induced by a $\CCC$-system of $V$-ultrafilters $\SSS$. Notice that the results of the last section allows to translate any result about $\CCC$-systems of $V$-ultrafilters to a result on $\CCC$-systems of filters in $V$, by simply considering the $\CCC$-system of $V$-ultrafilters $\dot{\ff}(\SS)$.

\begin{definition}
	Let $V\subseteq W$ be transitive models of $\ZFC$ and $\SSS\in W$ be a $\CCC$-system of $V$-ultrafilters. Let
	\[
	U_\SSS = \bp{u: O_a \to V ~ : ~ a \in \CCC,~ u\in V}.
	\]
	Define the relations
	\[
	u =_\SSS v ~ \iff ~ \bp{f \in O_c : ~ u(\pi_{ca}(f)) = v(\pi_{cb}(f))} \in F_c
	\]
	\[
	u \in_\SSS v ~ \iff ~ \bp{f \in O_c : ~ u(\pi_{ca}(f)) \in v(\pi_{cb}(f))} \in F_c
	\]
	where $O_a = \dom(u)$, $O_b = \dom(v)$, $c = a \cup b$. The ultrapower of $V$ by $\SSS$ is $\Ult(V, \SSS) = \ap{U_\SSS / =_\SSS, ~ \in_\SSS}$.
\end{definition}

We leave to the reader to check that the latter definition is well-posed. From now on, we identify the well-founded part of the ultrapower with its Mostowski collapse.

\begin{theorem}[\L o\'s] \label{thm:los}
	Let $\phi(x_1, \ldots, x_n)$ be a formula and let $u_1, \ldots, u_n \in U_\SSS$. Then $\Ult(V, \SSS) \models \phi([u_1]_\SSS, \ldots, [u_n]_\SSS)$ if and only if
	\[
	\bp{f \in O_b : ~ \phi(u_1(\pi_{ba_1}(f)), \ldots, u_n(\pi_{ba_n}(f)))} = A \in F_b
	\]
	where $O_{a_i} = \dom(u_i)$ for $i=1\ldots n$, $b = \bigcup a_i$.
\end{theorem}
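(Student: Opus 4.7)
The plan is to proceed by induction on the complexity of the formula $\phi(x_1, \ldots, x_n)$. The atomic cases $\phi \equiv (x_i = x_j)$ and $\phi \equiv (x_i \in x_j)$ follow essentially by unfolding the definitions of $=_\SSS$ and $\in_\SSS$, modulo a single application of compatibility to lift the relevant set from $O_{a_i \cup a_j}$ to $O_b$ where $b = \bigcup_k a_k$.

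For negation, I would invoke the ultrafilter property: by the inductive hypothesis, $\Ult(V,\SSS) \not\models \phi([\vec{u}]_\SSS)$ iff the set $A_\phi$ of those $f \in O_b$ on which $\phi(u_1(\pi_{ba_1}(f)), \ldots, u_n(\pi_{ba_n}(f)))$ holds fails to be in $F_b$, which by the ultrafilter property is equivalent to $O_b \setminus A_\phi = A_{\neg\phi} \in F_b$. For conjunction $\phi_1 \wedge \phi_2$, let $b_j$ be the union of the $a_i$'s associated to the variables occurring in $\phi_j$, set $b = b_1 \cup b_2$, use the inductive hypothesis together with compatibility to lift the witnessing sets $A_j$ to $\pi_{bb_j}^{-1}[A_j] \in F_b$, and observe that their intersection, which lies in $F_b$ by the filter property, is precisely the set witnessing the conjunction.

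The existential case is the core one. For the forward direction, if $\Ult(V,\SSS) \models \exists x\, \phi(x, [\vec{u}]_\SSS)$ is witnessed by some $[v]_\SSS$ with $\dom(v) = O_d$, the inductive hypothesis applied at $c = d \cup b$ yields that $\bp{g \in O_c : \phi(v(\pi_{cd}(g)), u_1(\pi_{ca_1}(g)), \ldots)} \in F_c$; this set is contained in $A' = \bp{g \in O_c : \exists x\, \phi(x, u_1(\pi_{ca_1}(g)), \ldots)} \in F_c$, and using the identity $\pi_{ba_i} \circ \pi_{cb} = \pi_{ca_i}$ (immediate from $\pi_{xy}(h) = h \res y$) one checks that $A' = \pi_{cb}^{-1}[A]$, where $A$ is the analogous set over $O_b$, so compatibility gives $A \in F_b$ as required. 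For the backward direction, given such an $A \in F_b$, $\AC$ in $V$ supplies a function $v : O_b \to V$ in $V$ with $v(f)$ a witness of $\exists x\, \phi(x, u_1(\pi_{ba_1}(f)), \ldots)$ for each $f \in A$ (and arbitrary off $A$); then $v \in U_\SSS$, the set where $\phi(v(f), u_1(\pi_{ba_1}(f)), \ldots)$ holds contains $A$ and so lies in $F_b$, and by the inductive hypothesis $\Ult(V,\SSS) \models \phi([v]_\SSS, [\vec{u}]_\SSS)$, whence the desired existential.

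The only real subtlety is the combinatorial bookkeeping with projections and the repeated use of compatibility to pass between domains, all of which is ultimately governed by the identity $\pi_{ba} \circ \pi_{cb} = \pi_{ca}$. It is worth remarking that \emph{normality} plays no role in this proof: that axiom is instead reserved for the companion results about the ultrapower embedding (for instance, to show that $[\id \res a]_\SSS$ represents $a$, or to identify the critical point in the $\ap{\kappa,\lambda}$-setting).
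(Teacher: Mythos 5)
Your proof is correct and is precisely the argument the paper has in mind: the paper leaves this result to the reader as ``a straightforward generalization of \L o\'s theorem for directed systems of ultrapowers,'' and your induction on formula complexity --- atomic cases via the definitions of $=_\SSS$ and $\in_\SSS$ plus compatibility, negation via the ultrafilter property, conjunction via the filter property, and the existential case via compatibility and $\AC$ in $V$ for the witness function --- is exactly that standard argument, carried out correctly. Your closing remark that normality (and fineness) play no role also matches the paper's own observation immediately following the theorem.
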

\begin{proof}
	Left to the reader, it is a straightforward generalization of \L o\'s theorem for directed systems of ultrapowers to the current setting.
\end{proof}

As in common model-theoretic use, define $j_\SSS : V \to \Ult(V, \SSS)$ by $j_\SSS(x) = [c_x]_\SSS$ where $c_x : O_\emptyset \to \bp{x}$. From the last theorem it follows that the map $j_\SSS$ is elementary. Notice that the proof of the last theorem does not use neither \emph{fineness} nor \emph{normality} of the system of ultrafilters. However these properties allows us to study the elements of the ultrapower by means of the following proposition.

\begin{proposition} \label{prop:rappcanonici}
	Let $\SSS$ be a $\CCC$-system of ultrafilters, $j : V \to M = \Ult(V, \SSS)$ be the derived embedding. Then,
	\begin{enumerate}
		\item $[c_x]_\SSS = j(x)$ for any $x \in V$;
		\item $[\proj_x]_\SSS = x$ for any $x \in \bigcup \CCC$, where
		\[
		\begin{array}{rrcl}
			   \proj_x: & O_{\bp{x}} & \longrightarrow  & V \\
			   & f & \longmapsto & f(x)
		\end{array}
		\]
		\item $[\ran_a]_\SSS = a$ for any $a \in \CCC$;
		\item $[\dom_a]_\SSS = j[a]$ for any $a \in \CCC$;
		\item $[\id_a]_\SSS = \cp{j \res a}^{-1}$ for any $a \in \CCC$.
	\end{enumerate}
\end{proposition}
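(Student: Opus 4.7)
The plan is to handle the five clauses in order, using \L o\'s' theorem (Theorem~\ref{thm:los}) as the principal tool, together with fineness and normality of $\SSS$ to pin down each canonical representative. Clause~(1) is immediate from the definition of $j_\SSS$.

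For~(2) I would proceed by $\in$-induction on $x\in\bigcup\CCC$, showing that $[\proj_x]_\SSS$ coincides with $x$ after identifying the well-founded part of the ultrapower with its Mostowski collapse. The key observation is that every $f\in O_a$ is $\in$-preserving, so \L o\'s' theorem reduces $[\proj_y]_\SSS \in_\SSS [\proj_x]_\SSS$ to the condition $y\in x$; transitivity of $\bigcup\CCC$ ensures the inductive hypothesis applies to all such $y$. To close the induction I need to rule out further elements: given $[u]_\SSS\in_\SSS[\proj_x]_\SSS$, the function $u$ is regressive (witnessed by $x$), so normality yields some $y$ with $u(\pi(f))=f(y)$ on a large set; the relation $f(y)\in f(x)$ together with $\in$-preservation of $f$ then forces $y\in x$, so $[u]_\SSS=[\proj_y]_\SSS=y$ by inductive hypothesis.

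Clauses~(3) and~(4) then follow by combining~(2) with fineness and normality. For~(3), fineness directly gives $[\proj_x]_\SSS\in_\SSS[\ran_a]_\SSS$ for each $x\in a$; conversely, any $[u]_\SSS\in_\SSS[\ran_a]_\SSS$ corresponds to a regressive $u$ on a large set where $u(\pi(f))\in\ran(f\res a)$, and normality produces $y$ such that $u(\pi(f))=f(y)$ a.e., which must satisfy $y\in a$ by injectivity of $f$, whence $[u]_\SSS=y$ by~(2). For~(4), fineness gives $j(x)=[c_x]_\SSS\in_\SSS[\dom_a]_\SSS$ for $x\in a$; conversely, if $[u]_\SSS\in_\SSS[\dom_a]_\SSS$, then $u(f)\in\dom(f)\cap a$ on a large set, so the auxiliary map $f\mapsto f(u(f))$ is regressive, and normality furnishes a constant value $f(y)$ which, by injectivity of $f$, forces $u$ itself to be constantly equal to some $y\in a$, hence $[u]_\SSS=j(y)$.

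Finally, for~(5), \L o\'s' theorem together with~(3) and~(4) immediately yields that $[\id_a]_\SSS$ is a function in $M$ with domain $j[a]$ and range $a$. To identify it I would compute its value at each $j(x)$ with $x\in a$: by \L o\'s' theorem, $[\id_a]_\SSS(j(x))$ is represented by $f\mapsto\id_a(f)(x)=f(x)=\proj_x(f)$, so it equals $[\proj_x]_\SSS=x$ by~(2). Hence $[\id_a]_\SSS$ sends $j(x)$ to $x$ for every $x\in a$, identifying it with $(j\res a)^{-1}$. I expect the main subtlety to be clause~(2): the inductive argument there requires invoking $\in$-preservation of Mostowski collapse maps together with normality to exclude extraneous elements, and transitivity of $\bigcup\CCC$ to keep the induction well-posed; clauses~(3)--(5) are then largely an application of the same template.
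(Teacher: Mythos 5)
Your proposal is correct and follows essentially the same route as the paper: clause (1) from the definition of $j_\SSS$, clause (2) by induction (on $\in$/rank) using $\in$-preservation for one inclusion and regressiveness plus normality for the other, and clauses (3)--(4) by the same fineness/normality/injectivity template. The only (cosmetic) difference is in clause (5), where the paper concludes by noting that $(j\res a)^{-1}$ is the unique $\in$-preserving function with domain $j[a]$ and range $a$, while you instead compute $[\id_a]_\SSS(j(x))=[\proj_x]_\SSS=x$ pointwise via \L o\'s' theorem; both finishes are valid.
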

\begin{proof}
	\begin{enumerate}
		\item Follows from the definition of $j$.

		\item By induction on $\rank(x)$. Fix $x \in \bigcup \CCC$. If $y \in x$, then $y = [\proj_y]_\SSS \in [\proj_x]_\SSS$ since all $f$ in $O_{\bp{x,y}}$ are ${\in}$-preserving. Conversely, assume that $x \in a$ and $[u:O_a \to V]_\SSS \in [\proj_{x}]_\SSS$. By \L o\'s's Theorem, $u$ is regressive on $A = \bp{f \in O_a: ~ u(f) \in f(x)} \in F_a$ thus there exist $y \in b \supseteq a$ and $B \subseteq \pi_b^{-1}[A]$ in $F_b$ such that $u (\pi_a(f)) = f(y)$ for all $f \in B$. Since $f(y) = u (\pi_a(f)) \in f(x)$ and any $f \in B$ is ${\in}$-preserving, it follows that $y \in x$. Finally, by \L o\'s's Theorem and inductive hypothesis $[u]_\SSS = [\proj_y]_\SSS = y \in x$. 

		\item  Fix $a \in \CCC$. If $y = [\proj_y]_\SSS \in a$, by fineness
		\[
			\bp{ f \in O_{a}: f(y) \in \ran_a(f)} =  \bp{ f \in O_{a}: y \in \dom(f)} \in F_{a}.
		\]
		thus $y = [\proj_y]_\SSS \in [\ran_a]_\SSS$ by \L o\'s's Theorem. Conversely, assume that $u: O_b \to V$ is such that $[u]_\SSS \in [\ran_a]_\SSS$, $b \supseteq a$. By \L o\'s's Theorem, $u$ is regressive on
		\[
			A = \bp{f \in O_b: u(f) \in \ran_a(\pi_a(f)) = f[a]} \in F_b
		\]
		thus by normality there exist $y \in c \supseteq b$ and $B \subseteq \pi_c^{-1}[A]$  such that $u(\pi_b(f)) = f(y)$ for all $f \in B$. Since $B \subseteq \pi_c^{-1}[A]$, $f(y) = u(\pi_b(f)) = f(x)$ for some $x \in a$. Since $f$ is injective, $x = y \in a$ and $[u]_\SSS = [\proj_y]_\SSS = y$ by \L o\'s's Theorem.

		\item Fix $a \in \CCC$. If $x \in a$, by fineness $\bp{f \in O_a : x \in \dom_a(f)}\in F_a$ hence $j(x) = [c_x]_\SSS \in [\dom_a]_\SSS$. Conversely, assume $[u: O_b \to V]_\SSS \in [\dom_a]_\SSS$ with $b \supseteq a$. By \L o\'s's Theorem, $A = \bp{f \in O_b: u(f) \in \dom_a(\pi_a(f))}\in F_b$ and we can define
		\[
		\begin{array}{rrcl}
			v: & A & \longrightarrow  & V \\
			   & f & \longmapsto & f(u(f)).
		\end{array}
		\]
		that is regressive on $A$. Then by normality there exist $y \in c \supseteq b$ and $B \subseteq \pi^{-1}_c[A]$ such that $v(\pi_b(f))= f(u(\pi_b(f))) = f(y)$ for all $f \in B$. Since $f$ is injective, $u(\pi_b(f)) = y$ hence $y$ is in $\dom_a(\pi_a(f)) = \dom(f) \cap a$. Thus by \L o\'s's Theorem $[u]_\SSS= [c_y]_\SSS =j(y)$.
		
		\item Follows from points 3 and 4, together with the observation that $[\id_a]_\SSS$ has to be an ${\in}$-preserving function by \L o\'s's Theorem and $(j \res a)^{-1}$ is the only such function with domain $j[a]$ and range $a$. \qedhere
	\end{enumerate}
\end{proof}

These canonical representatives can be used in order to prove many general properties of $\CCC$-system of filters and of the induced ultrapowers. In particular, we shall use them to prove Propositions \ref{prop:support_monotonicity} and \ref{prop:completeness}, and other related properties.

\begin{proposition} \label{prop:induced_system}
	Let $\SSS$ be a $\CCC$-system of ultrafilters, $A \subseteq O_a$ be such that $a \in \CCC$. Then $A \in F_a$ if and only if $\cp{j_\SSS \res a}^{-1} \in j_\SSS(A)$.
\end{proposition}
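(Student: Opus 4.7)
The plan is to reduce the claim directly to \L o\'s's Theorem (Theorem \ref{thm:los}) by exploiting the canonical representatives computed in Proposition \ref{prop:rappcanonici}. Specifically, I would use two ingredients: point~(1) of Proposition \ref{prop:rappcanonici}, which gives $j_\SSS(A) = [c_A]_\SSS$ where $c_A : O_\emptyset \to \bp{A}$ is the constant function, and point~(5), which gives $(j_\SSS \res a)^{-1} = [\id_a]_\SSS$.

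Then the statement $(j_\SSS \res a)^{-1} \in j_\SSS(A)$ is rewritten as $[\id_a]_\SSS \in_\SSS [c_A]_\SSS$. Applying \L o\'s's Theorem to the atomic formula $\phi(x,y) \equiv x \in y$ with $u_1 = \id_a$ (whose domain is $O_a$) and $u_2 = c_A$ (whose domain is $O_\emptyset$), the relevant index set is $b = a \cup \emptyset = a$, and the projections $\pi_{ba}$ and $\pi_{b\emptyset}$ act trivially in the relevant way. So the condition becomes
\[
\bp{f \in O_a : \id_a(f) \in c_A(f)} \in F_a,
\]
which simplifies to $\bp{f \in O_a : f \in A} = A \in F_a$, as desired.

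There is essentially no obstacle: the whole content is book-keeping with canonical representatives, and the only care needed is to unfold the domains correctly when invoking \L o\'s's Theorem so that the common index $b$ and the projection maps are identified properly. Once that is done, the equivalence is immediate in both directions.
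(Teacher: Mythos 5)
Your proposal is correct and follows exactly the paper's own argument: the paper likewise identifies $j_\SSS(A) = [c_A]_\SSS$ and $\cp{j_\SSS \res a}^{-1} = [\id_a]_\SSS$ via Proposition \ref{prop:rappcanonici} and then applies \L o\'s's Theorem to reduce membership in the ultrapower to $\bp{f \in O_a : f \in A} = A \in F_a$. Nothing to add.
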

\begin{proof}
	By \L o\'s's Theorem, we have $A = \bp{f \in O_a : ~ f = \id_a(f) \in A} \in F_a$ if and only if $\cp{j_\SSS \res a}^{-1} = [\id_a]_\SSS \in [c_A]_\SSS = j_\SSS(A)$.
\end{proof}

\begin{lemma} \label{lem:supports}
	Let $\SS$ be a $\CCC$-system of filters and $a \in \CCC$. Then $\kappa_a$ is the minimum $\alpha$ such that $\Qp{j_{\dot{\ff}(\SS)}(\check{\alpha}) \geq \rank(\check{a})}_\SS = \1$.
\end{lemma}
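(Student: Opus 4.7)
The plan is to establish, for any $\CCC$-system of $V$-ultrafilters $\SSS$ with derived embedding $j_\SSS\colon V\to M$ (identified with its transitive collapse), the set-theoretic equivalence
\[
O_a \cap {}^a V_\alpha \in F_a \iff \rank(a) \leq j_\SSS(\alpha),
\]
and then to transfer it to the generic system $\dot\ff(\SS)$ via Proposition~\ref{prop:ideal_generic} to read off the boolean values.

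For the core equivalence, set $A = O_a \cap {}^a V_\alpha$. By Proposition~\ref{prop:induced_system}, $A \in F_a$ iff $(j_\SSS\res a)^{-1} \in j_\SSS(A)$. Proposition~\ref{prop:rappcanonici}.5 identifies $(j_\SSS\res a)^{-1}$ with $[\id_a]_\SSS$, and by elementarity $j_\SSS(A) = O^M_{j_\SSS(a)} \cap {}^{j_\SSS(a)}V^M_{j_\SSS(\alpha)}$. Membership of $(j_\SSS\res a)^{-1}$ in the first factor is automatic: it is the Mostowski collapse of $j_\SSS[a]$, an extensional subset of $\trcl(j_\SSS(a))$ lying in $M$. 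Membership in the second factor reduces to $\ran((j_\SSS\res a)^{-1}) = a \subseteq V^M_{j_\SSS(\alpha)}$. Since $a = [\ran_a]_\SSS \in M$ by Proposition~\ref{prop:rappcanonici}.3 and every $x \in a$ equals $[\proj_x]_\SSS \in M$ by Proposition~\ref{prop:rappcanonici}.2, absoluteness of rank for transitive classes yields $a \subseteq V^M_{j_\SSS(\alpha)}$ iff $\rank(a) \leq j_\SSS(\alpha)$, which completes the equivalence.

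Passing now to an arbitrary generic extension $V[G]$ of $V$ by $\SS^+/=_\SS$ and applying the equivalence to $\SSS_G := \dot\ff(\SS)_G$ (a $\CCC$-system of $V$-ultrafilters by Proposition~\ref{prop:ideal_generic}), together with the observation that $\Qp{\check{A} \in \dot{F}_a}_\SS = [A]_{I_a}$ by the definition of $\dot\ff$, we obtain
\[
\Qp{j_{\dot\ff(\SS)}(\check\alpha) \geq \rank(\check a)}_\SS = [O_a \cap {}^a V_\alpha]_{I_a},
\]
which equals $\1$ precisely when $O_a \cap {}^a V_\alpha \in F_a$. Both conditions are monotone in $\alpha$, so their minima coincide, and this minimum is $\kappa_a$ by definition of the support. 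The most delicate point is ensuring that $a$ and its elements live in the well-founded part of the ultrapower so that rank is absolute; this is precisely where the canonical representatives $[\ran_a]_\SSS = a$ and $[\proj_x]_\SSS = x$ from Proposition~\ref{prop:rappcanonici} are essential.
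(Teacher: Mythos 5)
Your proof is correct and follows essentially the same route as the paper: both reduce $O_a \cap {}^a V_\alpha \in F_a$ to $(j\res a)^{-1} \in j(O_a) \cap {}^{j(a)}V_{j(\alpha)}$ via Proposition~\ref{prop:induced_system} and then to $\rank(a) \leq j(\alpha)$, quantifying over all generic extensions to get the boolean value $\1$. You merely supply more detail (the automatic membership in $j(O_a)$, the use of the canonical representatives from Proposition~\ref{prop:rappcanonici} to place $a$ in the well-founded part so that rank is absolute) where the paper is terse.
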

\begin{proof}
	Let $j$ be the elementary embedding derived from $\dot{\ff}(\SS)$ in a generic extension by $\SS$. Notice that $O_a \cap {}^aV_\alpha \in F_a$ is equivalent by Proposition \ref{prop:induced_system} to
	\[
	\cp{j \res a}^{-1} \in j(O_a \cap {}^aV_\alpha) = j(O_a) \cap {}^{j(a)}V_{j(\alpha)}
	\]
	which is in turn equivalent to $a \subseteq V_{j(\alpha)}$ i.e. $j(\alpha) \geq \rank(a)$. Since this holds in all generic extensions by $\SS$, we are done.	
\end{proof}

\begin{proof}[Proof of Proposition~\ref{prop:support_monotonicity}]
	By the previous proposition $\kappa_a$ is the minimum $\alpha$ such that $\1 \Vdash_\SS j(\alpha) \geq \rank(a)$, hence it depends (monotonically) only on $\rank(a)$. The conclusion of Proposition \ref{prop:support_monotonicity} follows.
\end{proof}

\begin{proposition} \label{prop:critical_point}
	Let $\SS$ be a $\CCC$-system of filters. Then $\kappa$ is the critical point of $j = j_{\dot{\ff}(\SS)}$ with boolean value $\1$ iff $\SS$ is a $\ap{\kappa, \lambda}$-system of filters.
\end{proposition}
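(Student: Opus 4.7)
The plan is to match each defining clause of a $\ap{\kappa,\lambda}$-system against the assertion $\1 \Vdash \crit(j) = \kappa$, using Proposition~\ref{prop:induced_system} in its boolean-valued form (i.e.\ $A \in F_a \iff \1 \Vdash (j \res a)^{-1} \in j(A)$, which follows by applying the proposition inside every generic extension) together with Lemma~\ref{lem:supports} (characterising $\kappa_a$ as the least $\alpha$ with $\1 \Vdash j(\alpha) \geq \rank(a)$). Both directions then amount to running these two equivalences forwards or backwards, clause by clause.

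For the forward direction, assume $\SS$ is a $\ap{\kappa,\lambda}$-system. First, for each $\gamma < \kappa$, principality of $F_{\bp{\gamma}}$ at $\id\res\bp{\gamma}$ reads: for every $A \subseteq O_{\bp{\gamma}}$, $A \in F_{\bp{\gamma}} \iff \id\res\bp{\gamma} \in A$. Combined with Proposition~\ref{prop:induced_system}, this yields $\1 \Vdash (j\res\bp{\gamma})^{-1} = \id\res\bp{\gamma}$ as functions. Since $(j\res\bp{\gamma})^{-1}$ is the singleton function $\bp{(j(\gamma),\gamma)}$ with domain $\bp{j(\gamma)}$ while $\id\res\bp{\gamma}$ is $\bp{(\gamma,\gamma)}$, the equality is exactly $\1 \Vdash j(\gamma) = \gamma$, so $\crit(j) \geq \kappa$ with boolean value $\1$. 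Next, apply Lemma~\ref{lem:supports} to $a = \bp{\kappa} \in V_{\kappa+2}$: the third clause gives $\kappa_{\bp{\kappa}} \leq \kappa$, and the lemma yields $\1 \Vdash j(\kappa_{\bp{\kappa}}) \geq \rank(\bp{\kappa}) = \kappa+1$. If $\kappa_{\bp{\kappa}} < \kappa$ then the previous step would make it a fixed point of $j$, contradicting $j(\kappa_{\bp{\kappa}}) \geq \kappa+1$; hence $\kappa_{\bp{\kappa}} = \kappa$ and $\1 \Vdash j(\kappa) > \kappa$, completing $\crit(j) = \kappa$ boolean-valuedly.

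For the backward direction, assume $\1 \Vdash \crit(j) = \kappa$. Fix $\gamma < \kappa$ with $\bp{\gamma} \in \CCC$: since $\1 \Vdash j(\gamma) = \gamma$, we get $\1 \Vdash (j\res\bp{\gamma})^{-1} = \id\res\bp{\gamma}$, and Proposition~\ref{prop:induced_system} together with the absoluteness $\id\res\bp{\gamma} \in A \iff \id\res\bp{\gamma} \in j(A)$ (valid because $\id\res\bp{\gamma}$ is fixed by $j$ with boolean value $\1$ and $A \in V$) delivers $A \in F_{\bp{\gamma}} \iff \id\res\bp{\gamma} \in A$, i.e.\ the required principality. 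For the third clause, any $a \in V_{\kappa+2}$ has $\rank(a) \leq \kappa+1$, and $\1 \Vdash j(\kappa) \geq \kappa+1 \geq \rank(a)$ by the critical-point hypothesis, so Lemma~\ref{lem:supports} gives $\kappa_a \leq \kappa$. Finally, the clause $\kappa \subseteq \bigcup\CCC$ is the consistency requirement that makes the principality statement on each $F_{\bp{\gamma}}$ meaningful; it is implicit in the above, since whenever $\bp{\gamma} \notin \CCC$ the corresponding filter does not exist in $\SS$.

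The main technical subtlety is the careful decoding of the equality $(j \res \bp{\gamma})^{-1} = \id \res \bp{\gamma}$: reading both sides as sets of ordered pairs with their distinct domains $\bp{j(\gamma)}$ and $\bp{\gamma}$ is what repackages a statement about a $V$-filter into the ordinal equation $j(\gamma) = \gamma$, and is the bridge between the two formulations in both directions. A secondary but pervasive concern is to keep every biconditional at the boolean-value level rather than inside a fixed generic extension $V[G]$, so that filter-level hypotheses in $V$ uniformly produce conclusions of the form $\1 \Vdash \phi$.
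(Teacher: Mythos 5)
Your proof is correct and follows essentially the same route as the paper's: both directions reduce to Proposition \ref{prop:induced_system} and Lemma \ref{lem:supports}, decoding principality of $F_{\bp{\gamma}}$ into $\1 \Vdash j(\gamma) = \gamma$ and the support bound into $\1 \Vdash j(\kappa) \geq \kappa+1$. The only differences are presentational: you argue the direction from the $\ap{\kappa,\lambda}$-clauses to the critical point directly rather than by contradiction on a single condition, and you instantiate the rank-$(\kappa+1)$ domain as $\bp{\kappa}$ where the paper takes an arbitrary $a \in \CCC \cap (V_{\kappa+2} \setminus V_{\kappa+1})$ --- an inessential specialization resting on the same implicit assumption that $\CCC$ contains such a domain.
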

\begin{proof}
	Suppose that $\kappa$ is the critical point of $j$ with boolean value $\1$. If $\gamma < \kappa$, $A \in F_{\bp{\gamma}}$ iff $\1 \Vdash_\SS (j \res \bp{\gamma})^{-1} = j(\id \res \bp{\gamma}) \in j(A)$ iff $\id \res \bp{\gamma} \in A$. Thus $F_{\bp{\gamma}}$ is principal generated by $\id \res \bp{\gamma}$.
	If $a \in \CCC \cap V_{\kappa+2}$, $\rank(a) \leq \kappa+1 \leq j_{\dot{\ff}(\SS)}(\kappa)$ with boolean value $\1$, thus $\kappa_a \leq \kappa$ by Lemma \ref{lem:supports}.

	Conversely, suppose that $\bp{\id \res \bp{\gamma}} \in F_{\bp{\gamma}}$ for $\gamma < \kappa$, and $\kappa_a \leq \kappa$ for $a \in V_{\kappa+2}$. If there is an $A \in \SS^+$ forcing that $j$ has no critical point or has critical point bigger than $\kappa$, $\kappa_a = \rank(a) > \kappa$ for $a \in V_{\kappa+2} \setminus V_{\kappa+1}$, a contradiction. 
	If there is a $B \in \SS^+$ forcing that $j$ has critical point $\gamma$ smaller than $\kappa$, $F_{\bp{\gamma}}$ cannot be principal generated by $\id \res \bp{\gamma}$, again a contradiction.
\end{proof}

\begin{proof}[Proof of Proposition \ref{prop:completeness}]
	Let $\SS$ be a $\ap{\kappa, \lambda}$-system of filters, $a$ be in $\CCC$, $j$ be derived from $\dot{\ff}(\SS)$. We need to prove that $F_a$ is ${<}\kappa$-complete for all $a$. Suppose that $\AAA \subseteq F_a$ is such that $\vp{\AAA} < \kappa$. Hence by Proposition \ref{prop:critical_point}, $j(A)=j[A]$. Then $\bigcap \AAA \in F_a$ iff
	\[
	\1 \Vdash_\SS \cp{j \res a}^{-1} \in j(\bigcap \AAA) = \bigcap j(\AAA) = \bigcap j[\AAA]
	\]
	which is true since $A \in F_a \Rightarrow \1 \Vdash_\SS \cp{j \res a}^{-1} \in j(A)$ for all $A \in \AAA$.
\end{proof}

The ultrapower $\Ult(V, \SSS)$ happens to be the direct limit of the directed system of ultrapowers $\ap{\Ult(V, F_a) : a \in \CCC}$ with the following factor maps:
\[
\begin{array}{rrcl}
	k_{ab} : & \Ult(V,F_a) & \longrightarrow & \Ult(V,F_b)\\
		& [u]_{F_a} & \longmapsto & [u \circ \pi_{ba}]_{F_b}
\end{array}
\]
\[
\begin{array}{rrcl}
 	k_{a} : & \Ult(V,F_a) & \longrightarrow & \Ult(V,\SSS)\\
 		& [u]_{F_a} & \longmapsto & [u]_{\SSS}
\end{array}
\]
The ultrapower $\Ult(V, \SSS)$ is also the direct limit of the ultrapowers given by the restrictions of $\SSS$, as shown in the following.

\begin{definition}
	Let $\SS$ be a $\CCC$-system of filters of length $\lambda$, $\alpha < \lambda$ be an ordinal. The restriction of $\SS$ to $\alpha$ is the $(\CCC \cap V_\alpha)$-system of filters $\SS \res \alpha = \bp{F_a : ~ a \in \CCC \cap V_\alpha}$. Moreover, if $\SSS$ is a $\CCC$-system of ultrafilters the corresponding factor map is
	\[
	\begin{array}{rrcl}
	 	k_{\alpha} : & \Ult(V,\SSS \res \alpha ) & \longrightarrow & \Ult(V,\SSS)\\
	 		& [u]_{\SSS \res \alpha} & \longmapsto & [u]_{\SSS}.
	\end{array}
	\]
\end{definition}
 
\begin{proposition}
	Let $\SSS$ be a $\CCC$-system of ultrafilters of length $\lambda$, $\alpha < \lambda$ be an ordinal. Then
	\begin{enumerate}
		\item $k_\alpha$ is elementary;
		\item $k_\alpha \circ j_{\SSS \res \alpha} = j_\SSS$;
		\item $k_\alpha \res \bigcup(\CCC \cap V_\alpha) = \id \res \bigcup(\CCC \cap V_\alpha)$, hence $\crit(k_\alpha) \geq \alpha$.
	\end{enumerate}
\end{proposition}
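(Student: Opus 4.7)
The plan is to derive all three clauses from \L o\'s's Theorem~\ref{thm:los} together with the canonical-representative calculus of Proposition~\ref{prop:rappcanonici}. The key observation is that for any $a \in \CCC \cap V_\alpha$ and any $A \subseteq O_a$, the filter $F_a$ is literally the same object in $\SSS$ and in $\SSS \res \alpha$. Consequently each truth value computed via \L o\'s inside $\Ult(V, \SSS \res \alpha)$ agrees on the nose with the corresponding truth value inside $\Ult(V, \SSS)$, and this agreement will deliver both the well-definedness and the elementarity of $k_\alpha$ in a single stroke.

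More precisely, for (1) I would take a formula $\phi(x_1, \ldots, x_n)$ and representatives $u_i : O_{a_i} \to V$ with $a_i \in \CCC \cap V_\alpha$, set $b = \bigcup_i a_i$ (which still lies in $\CCC \cap V_\alpha$ by the ideal property of $\CCC$), and apply \L o\'s twice to the same set $\bp{f \in O_b : ~ \phi(u_1(\pi_{ba_1}(f)), \ldots, u_n(\pi_{ba_n}(f)))}$: it witnesses the truth of $\phi$ at $([u_i]_{\SSS \res \alpha})_i$ in $\Ult(V, \SSS \res \alpha)$ iff it lies in $F_b$ iff it witnesses the truth of $\phi$ at $([u_i]_\SSS)_i$ in $\Ult(V, \SSS)$. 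Specialising $\phi$ to equality gives well-definedness, and the general case gives elementarity. Clause (2) is then immediate from unfolding definitions, since $k_\alpha(j_{\SSS \res \alpha}(x)) = k_\alpha([c_x]_{\SSS \res \alpha}) = [c_x]_\SSS = j_\SSS(x)$, where $c_x$ is defined on $O_\emptyset$ and $\emptyset \in \CCC \cap V_\alpha$ is a legitimate representative on both sides.

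For (3) I would appeal to Proposition~\ref{prop:rappcanonici}(2) applied independently in each of the two ultrapowers. Fixing $x \in \bigcup(\CCC \cap V_\alpha)$ guarantees $\bp{x} \in \CCC \cap V_\alpha$, so $\proj_x$ is a valid element of $U_{\SSS \res \alpha}$; the proposition identifies $[\proj_x]_{\SSS \res \alpha}$ with $x$ in $\Ult(V, \SSS \res \alpha)$ and $[\proj_x]_\SSS$ with $x$ in $\Ult(V, \SSS)$, whence $k_\alpha(x) = k_\alpha([\proj_x]_{\SSS \res \alpha}) = [\proj_x]_\SSS = x$. The critical-point clause $\crit(k_\alpha) \geq \alpha$ then drops out, since every ordinal $\beta < \alpha$ with $\bp{\beta} \in \CCC$ lies in $\bigcup(\CCC \cap V_\alpha)$ and is therefore fixed.

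I do not anticipate a serious obstacle: the entire argument is bookkeeping about which domain $b$ witnesses a given truth value, and Proposition~\ref{prop:rappcanonici} already encapsulates the nontrivial normality-based content. The only subtle point is the repeated appeal to the ideal property of $\CCC$ to ensure that unions, singletons, and the empty set used along the way remain inside $\CCC \cap V_\alpha$, so that both $\Ult(V, \SSS \res \alpha)$ and $\Ult(V, \SSS)$ see exactly the same representatives and the same filter values.
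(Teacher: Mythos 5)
Your argument is correct, but it takes a different route from the paper. The paper disposes of this proposition in one line, as ``a particular case of Proposition~\ref{prop:factormap}'': by Proposition~\ref{prop:induced_system}, $\SSS \res \alpha$ is exactly the $(\CCC \cap V_\alpha)$-system of ultrafilters derived from the generic embedding $j_\SSS : V \to \Ult(V,\SSS)$, and $k_\alpha$ coincides (modulo the identification $j_\SSS(u)\cp{(j_\SSS\res a)^{-1}} = [u]_\SSS$, an instance of \L o\'s) with the factor map of Definition~\ref{def:factormap}; all three clauses are then read off from Proposition~\ref{prop:factormap}. You instead verify everything directly: elementarity and well-definedness from the observation that the two ultrapowers compute \L o\'s truth values with literally the same filter $F_b$ for $b \in \CCC \cap V_\alpha$, clause (2) by unfolding $c_x$, and clause (3) from Proposition~\ref{prop:rappcanonici}(2) applied in both ultrapowers. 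Both arguments are sound and of comparable length once the paper's implicit identifications are unwound; yours is more self-contained (it avoids the forward reference and the detour through derived systems), while the paper's buys uniformity, since Proposition~\ref{prop:factormap} is proved once and reused for every factor map in the sequel. Two small remarks: your appeal to Proposition~\ref{prop:rappcanonici} for $\SSS\res\alpha$ tacitly uses that $\SSS\res\alpha$ satisfies normality with guessing domains inside $\CCC\cap V_\alpha$ (immediate from the derived-system description, and asserted by the paper's definition of restriction, so acceptable); and the final step ``every $\beta<\alpha$ lies in $\bigcup(\CCC\cap V_\alpha)$'' can fail for the top ordinal when $\alpha$ is a successor, but this imprecision is already present in the paper's own statement of Proposition~\ref{prop:factormap}(3) and does not affect the intended conclusion.
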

\begin{proof}
	A particular case of Proposition \ref{prop:factormap} to follow.
\end{proof}


\subsection{System of ultrafilters derived from an embedding} \label{ssec:sys_derived}

We now present the definitions and main properties of $\CCC$-system of ultrafilters derived from a generic elementary embedding. With abuse of notation, we denote as generic elementary embedding any map $j: V \to M$ which is elementary and such that $M \subseteq W$ for some $W \supseteq V$. In the following we shall assume that $j$ is a definable class in $W$. However, we believe that it should be possible to adapt the present results to non-definable $j$, provided we are working in a strong enough set theory with sets and classes (e.g. $\MK$). We also provide a comparison between derived $\CCC$-systems of ultrafilters for different choices of $\CCC$. 

Let $\SSS$ be a $\CCC$-system of ultrafilters, $A \subseteq O_a$ be such that $a \in \CCC$. Then by Proposition \ref{prop:induced_system},
\[
	A \in F_a  \iff \cp{j_\SSS \res a}^{-1} \in j_\SSS(A)
\]
and this relation actually provides a definition of $\SSS$ from $j_\SSS$. This justifies the following definition.

\begin{definition} \label{def:derived_csu}
	Let $V \subseteq W$ be transitive models of $\ZFC$. Let $j: V \to M \subseteq W$ be a generic elementary embedding definable in $W$, $\CCC \in V$ be a directed set of domains such that for any $a \in \CCC$, $(j\res a)^{-1} \in M$. The \emph{$\CCC$-system of ultrafilters derived from $j$} is $\SSS = \ap{F_a : ~ a \in \CCC}$ such that:
	\[
	F_a = \bp{A \subseteq O_a : \cp{j \res a}^{-1} \in j(A)}.
	\]
\end{definition}

Definition \ref{def:derived_csu} combined with Proposition \ref{prop:induced_system} guarantees that for a given a $\CCC$-system of ultrafilters $\SSS$, the $\CCC$-system of ultrafilters derived from $j_\SSS$ is $\SSS$ itself. We now show that the definition is meaningful for any embedding $j$.

\begin{proposition} \label{prop: derived_system}
	Let $j$, $\CCC$, $\SSS$ be as in the definition above. Then $\SSS$ is a $\CCC$-system of $V$-ultrafilters. 
\end{proposition}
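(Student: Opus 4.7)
The plan is to verify each of the five defining properties of a $\CCC$-system of $V$-ultrafilters in Definition \ref{def:csystem}, reducing every condition via the defining identity $A \in F_a \iff \cp{j\res a}^{-1} \in j(A)$ to a question about $\cp{j\res a}^{-1}$, to be handled by elementarity of $j$.

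For the \emph{filter} and \emph{ultrafilter} properties one observes that $F_a$ is the pullback through $j$ of the principal ultrafilter concentrated at $\cp{j\res a}^{-1}$ on $j(O_a)$, so closure under intersections, supersets, and the dichotomy that either $A \in F_a$ or $O_a \setminus A \in F_a$ follow respectively from $j(A\cap B) = j(A) \cap j(B)$, monotonicity of $j$, and $j(O_a \setminus A) = j(O_a) \setminus j(A)$, while non-triviality follows from $j(\emptyset) = \emptyset$. \emph{Fineness} at $x \in a$ is immediate once one notes $j(x) \in j[a] = \dom(\cp{j\res a}^{-1})$, hence $\cp{j\res a}^{-1} \in j(\bp{f \in O_a : x \in \dom(f)})$.

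The key lemma for \emph{compatibility} is that $\pi_{j(b)j(a)}(\cp{j\res b}^{-1}) = \cp{j\res a}^{-1}$ whenever $a \subseteq b$ lie in $\CCC$. This follows because the restriction of $\cp{j\res b}^{-1}$ to $j(a)$ has domain $j[b] \cap j(a)$, and elementarity gives $j[b] \cap j(a) = j[a]$: any $j(x) \in j(a)$ with $x \in b$ satisfies $x \in a$ by elementarity applied to the statement $j(x) \in j(a) \to x \in a$. The biconditional chain $A \in F_a \iff \cp{j\res a}^{-1} \in j(A) \iff \cp{j\res b}^{-1} \in j(\pi_{ba}^{-1}[A]) \iff \pi_{ba}^{-1}[A] \in F_b$ then follows.

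The main obstacle is \emph{normality}. Given $u \in V$ regressive on $A \in I_a^+$, since $F_a$ is an ultrafilter we have $A \in F_a$. By elementarity $j(u)$ is regressive on $j(A)$, so there is $x \in \dom(\cp{j\res a}^{-1}) = j[a]$ with $z := j(u)(\cp{j\res a}^{-1}) \trianglelefteq \cp{j\res a}^{-1}(x)$; writing $x = j(y)$ with $y \in a$ gives $z \trianglelefteq y$. Since $M$ is transitive and $V$ is transitive as a class, $z \in V$; and since $\bigcup\CCC$ is transitive with $y \in \bigcup\CCC$, we get $\bp{z} \in \CCC$, so $b := a \cup \bp{z} \in \CCC$ by the ideal property. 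Setting $B := \bp{f \in \pi_{ba}^{-1}[A] : u(\pi_{ba}(f)) = f(z)}$, the compatibility identity gives $j(u)(\pi_{j(b)j(a)}(\cp{j\res b}^{-1})) = z = \cp{j\res b}^{-1}(j(z))$, and $\cp{j\res b}^{-1} \in j(\pi_{ba}^{-1}[A])$ by compatibility, so $\cp{j\res b}^{-1} \in j(B)$ and hence $B \in F_b$ witnesses normality, with $B \subseteq \pi_{ba}^{-1}[A]$ built in by construction.
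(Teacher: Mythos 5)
Your proposal is correct and follows essentially the same route as the paper: each clause of Definition~\ref{def:csystem} is reduced via $A \in F_a \iff (j\res a)^{-1} \in j(A)$ to a statement about $(j\res a)^{-1}$ handled by elementarity, with compatibility resting on the identity $\pi_{j(b)j(a)}((j\res b)^{-1}) = (j\res a)^{-1}$ and normality witnessed by $b = a \cup \bp{z}$ for $z = j(u)((j\res a)^{-1})$ and $B = \bp{f \in \pi_{ba}^{-1}[A] : u(\pi_{ba}(f)) = f(z)}$, exactly as in the paper. Your version is in fact marginally more explicit in two spots (justifying $j[b]\cap j(a)=j[a]$ and building the inclusion $B \subseteq \pi_{ba}^{-1}[A]$ into the definition of $B$), but the argument is the same.
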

\begin{proof}
\begin{enumerate}
	\item \emph{(Filter and ultrafilter property)} Fix $a \in \CCC$ and assume that $A, B \in F_a$. Then $(j \res a)^{-1} \in j(A) \cap j(B) = j(A \cap B).$ Moreover if $C \subseteq O_a$ and $A \subseteq C$, then $(j \res a)^{-1} \in j(A) \subseteq j(C)$. Finally, if $(j \res a)^{-1} \notin j(A)$ we have that $(j \res a)^{-1} \in j(O_a) \setminus j(A) = j(O_a \setminus A)$. 

	\item \emph{(Fineness)} Fix $x \in a$ so that $j(x) \in j[a]$. Then $j(x) \in \dom((j\res a)^{-1})$ hence we have $\bp{f \in O_a : x \in \dom(f)} \in F_a$ by definition of $F_a$.

	\item \emph{(Compatibility)} Assume that $a \subseteq b \in \CCC$ and $A \subseteq O_a$. Then
	\[
	(j\res b)^{-1} \in j(\pi^{-1}_{ba}[A]) = \bp{f \in O_{j(b)} : \pi_{j(a)}(f) \in j(A)}
	\]
	if and only if $(j \res a)^{-1} = \pi_{j(a)}((j\res b)^{-1}) \in j(A)$.

	\item \emph{(Normality)} Let $u : A \to V$ be regressive on $A \in F_a$ and in $V$. By elementarity,
	\[
		M \models \forall f \in j(A) ~ \exists x \in \dom(f) ~ j(u)(f) \trianglelefteq f(x)
	\]
	Since $(j\res a)^{-1} \in j(A)$, there exists $x \in j[a]$ with $j(u)((j\res a)^{-1}) \trianglelefteq (j\res a)^{-1}(x)$.  Define $y = j(u)((j\res a)^{-1})$, and put $b = a \cup \bp{y}$. Note that $y \trianglelefteq j^{-1}(x) \in a$ hence by transitivity of $\bigcup \CCC$, $\bp{y}\in \CCC$. Define $B = \bp{f \in O_b : u(\pi_{ba}(f)) = f(y)}$.  Then $(j \res b)^{-1} \in j(B)$, i.e. $B \in F_{b}$, since
	\[
	(j \res b)^{-1}(j(y))=y=j(u)((j \res a)^{-1})=j(u)(\pi_{j(a)}((j \res b)^{-1})). \qedhere
	\]
\end{enumerate}
\end{proof}

Given a $\CCC$-system of ultrafilters $\SSS$ derived from a generic embedding $j$, we can factor out the embedding $j$ through $j_\SSS$.

\begin{definition} \label{def:factormap}
	Let $j: V \to M \subseteq W$ be a generic elementary embedding, $\CCC \in V$ be a directed set of domains of length $\lambda$, $\SSS$ be the $\CCC$-system of ultrafilters derived from $j$. Then
	\[
	\begin{array}{rrcl}
		k: & \Ult(V,\SSS) & \longrightarrow & M \\
			& [u: O_a \to V ]_\SSS & \longmapsto & j(u)(\cp{j\res a}^{-1})
	\end{array}
	\]
	is the \emph{factor map associated to $\SSS$}.
\end{definition}

\begin{proposition} \label{prop:factormap}
	Let $j$, $\CCC$, $\lambda$, $\SSS$, $k$ be as in the previous definition. Then
	\begin{enumerate}
		\item $k$ is elementary;
		\item $k \circ j_\SSS= j$;
		\item $k \res \bigcup \CCC = \id \res \bigcup \CCC$ hence $\crit(k) \geq \lambda$;
		\item if $\lambda = j(\gamma)$ for some $\gamma$, then $\crit(k) > \lambda$.
	\end{enumerate}
\end{proposition}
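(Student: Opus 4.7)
The plan is to establish all four items using \L o\'s's Theorem for $\Ult(V,\SSS)$ together with the elementarity of $j$ and the canonical representatives of Proposition~\ref{prop:rappcanonici}. For elementarity (1), I would chain three equivalences for any formula $\phi$, functions $u_i:O_{a_i}\to V$ in $V$, and $b=\bigcup a_i$: first, $\Ult(V,\SSS)\models\phi([u_1]_\SSS,\dots)$ iff $\{f\in O_b:\phi(u_1(\pi_{ba_1}(f)),\dots)\}\in F_b$ by \L o\'s; second, by the very definition of $F_b$ from $j$, this is equivalent to $(j\res b)^{-1}\in j(\{f\in O_b:\phi(\dots)\})$; third, by elementarity of $j$ together with the identity $\pi_{j(a_i)}((j\res b)^{-1})=(j\res a_i)^{-1}$, this unfolds to $M\models\phi(j(u_1)((j\res a_1)^{-1}),\dots)$, which is precisely $M\models\phi(k([u_1]_\SSS),\dots)$.

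For (2), writing $j_\SSS(x)=[c_x]_\SSS$ with $c_x:O_\emptyset\to\{x\}$ constant, I would compute $k(j_\SSS(x))=j(c_x)((j\res\emptyset)^{-1})=j(c_x)(\emptyset)=j(x)$, using that $j(c_x)$ remains the constant function with value $j(x)$. For (3), I would fix $x\in\bigcup\CCC$; transitivity of $\bigcup\CCC$ yields $\{x\}\in\CCC$, and Proposition~\ref{prop:rappcanonici} gives $[\proj_x]_\SSS=x$, so $k(x)=j(\proj_x)((j\res\{x\})^{-1})=(j\res\{x\})^{-1}(j(x))=x$. Since the ordinals in $\bigcup\CCC$ form an initial segment reaching $\lambda$, $k$ fixes every $\alpha<\lambda$, giving $\crit(k)\geq\lambda$.

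The last item (4) requires an extra step: I would aim to show $j_\SSS(\gamma)=\lambda$, from which $k(\lambda)=k(j_\SSS(\gamma))=j(\gamma)=\lambda$ by (2) yields $\crit(k)>\lambda$. Part (2) already supplies $k(j_\SSS(\gamma))=\lambda$; combining with (3), first $j_\SSS(\gamma)\geq\lambda$ (otherwise $j_\SSS(\gamma)<\lambda$ would be $k$-fixed, contradicting $k(j_\SSS(\gamma))=\lambda$), and second $j_\SSS(\gamma)\leq\lambda$ because the order-preserving injection $k$ embeds the predecessors of $j_\SSS(\gamma)$ into those of $\lambda$. The main subtlety I anticipate is in this final step, where $k(\lambda)=\lambda$ must be obtained indirectly via the identification $\lambda=j_\SSS(\gamma)$, since a direct verification would require knowing a priori that $\lambda$ is an ordinal of $\Ult(V,\SSS)$.
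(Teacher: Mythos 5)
Your proof is correct and follows essentially the same route as the paper's: the same \L o\'s-theorem chain of equivalences for elementarity, and the same computations with $c_x$ and $\proj_x$ for items (2) and (3). For item (4) the paper argues slightly more directly that $\lambda = j(\gamma) = k(j_\SSS(\gamma))$ lies in the range of $k$ and hence cannot be its critical point, whereas you unwind that same observation by first identifying $j_\SSS(\gamma)$ with $\lambda$; the two arguments are equivalent.
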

\begin{proof}
	\begin{enumerate}
		\item Let $\phi(x_1, \dots, x_{n})$ be a formula, and for any $i\in n$ let $u_i: O_{a_i} \to V$, $a_i \in \CCC$. Put $b =\bigcup \bp{a_i : 1 \leq i \leq n}$ . Then $\Ult(V,\SSS) \models \phi([u_1]_\SSS, \dots, [u_{n}]_\SSS)$ if and only if (by \L o\'s Theorem)
		\[
		B = \bp{f \in O_b : \phi(u_1(\pi_{a_1}(f)), \dots, u_{n}(\pi_{a_{n}}(f)))} \in F_b.
		\]
		if and only if $(j \res b)^{-1} \in j(B)$ (by definition of $F_b$) i.e.
		\[
		M \models \phi(j(u_1)(\pi_{j(a_1)}(j \res b)^{-1}), \dots, j(u_{n})(\pi_{j(a_{n})}(j \res b)^{-1}))
		\]
		if and only if (by definition of $\pi_{j(a_i)}$)
		\[
		M \models \phi(j(u_1)((j \res a_i)^{-1}), \dots, j(u_{n})((j \res a_n)^{-1})).
		\]
		i.e. $M \models \phi(k([u_1]_\SSS), \dots, k([u_{n}]_\SSS)$.

		\item For any $x \in V$,
		\[
		k(j_\SSS(x)) = k([c_x]_\SSS) = j(c_x)(\emptyset) = c_{j(x)}(\emptyset) = j(x).
		\]

		\item Let $x \in \bigcup \CCC$. Then by Proposition \ref{prop:rappcanonici} for some $a\in \CCC$ with $x\in a$,
		\[
		k(x) = k([\proj_x]_\SSS) = j(\proj_x)(\cp{j\res a}^{-1}) = j^{-1}(j(x)) = x.
		\]

		\item If $\lambda = j(\gamma)$, the following diagram commutes:
		\begin{center}
			\begin{tikzpicture}[xscale=1.8,yscale=1.2]
					\node (A0_0) at (0, 0) {$V$};
					\node (A1_0) at (1, 0) {$M$};
					\node (A2_0) at (2, 0) {$W$};
					\node (A1_1) at (1, -1) {$\Ult(V,\SSS)$};
					\node (A2_1) at (2, -1) {$V[\SSS]$};
					\path (A0_0) edge [->] node [auto] {$j$} (A1_0);
					\path (A0_0) edge [->] node [auto,swap] {$j_\SSS$} (A1_1);
					\path (A1_1) edge [->] node [auto,swap] {$k$} (A1_0);
					\path (A1_0) edge[draw=none] node [sloped] {$\subseteq$} (A2_0);
					\path (A1_1) edge[draw=none] node [sloped] {$\subseteq$} (A2_1);
					\path (A2_0) edge[draw=none] node [sloped] {$\subseteq$} (A2_1);
			\end{tikzpicture}
		\end{center}
		Thus $\crit(k) \geq j(\gamma)$ and $k \circ j_\SSS(\gamma) = j(\gamma)$. Therefore $j(\gamma) \in \ran(k)$ hence $j(\gamma)$ cannot be the critical point of $k$, showing that the above inequality is strict.\footnote{Remark that in the above diagram $V[\SSS]$ is the smallest transitive model $N$ of $\ZFC$ such that $V,\bp{\SSS}\subseteq N\subseteq W$.} \qedhere
	\end{enumerate}
\end{proof}

Observe that in Definition \ref{def:derived_csu} $\cp{j \res a}^{-1} \notin M$ would imply that the derived filter $F_a$ is empty. Thus, depending on the choice of $\CCC$, there can be a limit on the maximal length attainable for a $\CCC$-system of ultrafilter derived from $j$. If $\CCC = \qp{\lambda}^{{<}\omega}$, $\cp{j \res a}^{-1}$ is always in $M$ thus there is no limit on the length of the extenders derived from $j$. If $\CCC = V_\lambda$, the maximal length is the minimal $\lambda$ such that $j[V_\lambda] \notin M$. These bounds are relevant, as shown in the following proposition.

\begin{proposition} \label{prop:tallest_tower}
	Let $\TTT \in W$ be a tower of length a limit ordinal $\lambda$, $j : V \to M \subseteq W$ be the derived embedding. Then the tallest tower derivable from $j$ is $\TTT$.
\end{proposition}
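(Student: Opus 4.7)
The plan has two parts: first, showing $\TTT$ is derivable from $j = j_\TTT$, and second, showing that no tower of length $>\lambda$ is derivable from $j$. The first is an immediate restatement of Proposition~\ref{prop:induced_system}, whose formula $A \in F_a \iff (j \res a)^{-1} \in j(A)$ is precisely the defining relation in Definition~\ref{def:derived_csu}; the required precondition $(j \res a)^{-1} \in M$ for $a \in V_\lambda$ is supplied by Proposition~\ref{prop:rappcanonici}(5).

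For the second part I would argue by contradiction. Assume a tower $\TTT'$ on $V_{\lambda'}$ with $\lambda' > \lambda$ is derivable from $j$. Since $\lambda$ is a limit ordinal, $V_\lambda$ has rank $\lambda < \lambda'$ and hence lies in $V_{\lambda'}$. The precondition of Definition~\ref{def:derived_csu} applied to $a = V_\lambda$ forces $(j \res V_\lambda)^{-1} \in M$, so in particular $j[V_\lambda] \in M$. Everything then reduces to showing $j[V_\lambda] \notin M$.

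The representation theorem for ultrapower elements provides the key tool: every $y \in M = \Ult(V, \TTT)$ has the form $[u]_\TTT$ for some $u : O_a \to V$ in $V$ with $a \in V_\lambda$, so every element of $M$ is ``supported'' on a domain of rank $< \lambda$. Assume toward contradiction $j[V_\lambda] = [u]_\TTT$ for some such $u, a$; by Theorem~\ref{thm:los}, for every $x \in V_\lambda$ the membership $j(x) \in j[V_\lambda]$ gives $\bp{f \in O_a : x \in u(f)} \in F_a$. The contradiction I would pursue goes through normality (Definition~\ref{def:csystem}(4)) applied to a regressive function of the form $h(f) \in u(f) \cap \ran(f)$: after extracting a guessed witness $y \in b \in V_\lambda$ such that $f(y) \in u(\pi_{ba}(f))$ on some $B \in F_b$, this single $y$ of rank $< \lambda$ must simultaneously witness $j$-images of unboundedly many $x < \lambda$, which clashes with $\sup_{\alpha < \lambda} j(\alpha)$ being the rank of $j[V_\lambda]$ in $M$.

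The main obstacle is making this clash rigorous---the point is that $F_a$ need not be $|V_\lambda|$-complete, so one cannot simply intersect all the witnessing sets, and one has to thread normality carefully through unboundedly many $x \in V_\lambda$. As a parallel fallback, I would use the factor map machinery: if $\TTT'$ of length $\lambda' > \lambda$ existed, then uniqueness of derivation from $j$ gives $\TTT' \res V_\lambda = \TTT$, producing factor maps $k : \Ult(V, \TTT') \to M$ from Proposition~\ref{prop:factormap} and $k_\lambda : M \to \Ult(V, \TTT')$ from the last proposition of Section~\ref{ssec:ultrapowers}, with $\crit(k) \geq \lambda'$. Now Proposition~\ref{prop:rappcanonici}(4) applied inside $\Ult(V, \TTT')$ yields $j_{\TTT'}[V_\lambda] \in \Ult(V, \TTT')$, whose transitive collapse gives $V_\lambda \in \Ult(V, \TTT')$, and then $k(V_\lambda) = V_\lambda \in M$ since $k$ is the identity on $V_{\lambda'}$. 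One closes by playing $V_\lambda \in M$ off against the rank-$<\lambda$ bound on representatives of elements of $M$.
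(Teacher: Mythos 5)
Your part (1) and your reduction of part (2) to showing $j[V_\lambda]\notin M$ are both correct and agree with the paper. The problem is that you never actually prove $j[V_\lambda]\notin M$. Your primary route via normality stalls exactly where you say it does: normality applied to a single regressive $h$ with $h(f)\in u(f)\cap\ran(f)$ produces one $y\in b$ with $f(y)\in u(\pi_{ba}(f))$ on a positive set, i.e.\ one element of $[u]_\TTT$; to contradict $[u]_\TTT=j[V_\lambda]$ you would need to control unboundedly many $x\in V_\lambda$ simultaneously, and since $F_a$ need not be $\vp{V_\lambda}$-complete this cannot be done by intersecting witnesses. You flag this obstacle but do not overcome it, so this branch is not a proof. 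The fallback branch does not close the gap either; it only relocates it. Granting the factor-map bookkeeping (and note that $V_\lambda\in\Ult(V,\TTT')$ would follow directly from $[\proj_{V_\lambda}]_{\TTT'}=V_\lambda$ in Proposition~\ref{prop:rappcanonici}(2), with no need to compute a transitive collapse inside a possibly ill-founded ultrapower), you end with ``$V_\lambda\in M$, which clashes with the rank-$<\lambda$ bound on representatives.'' But there is no such clash available for free: nothing forbids a function $u:O_a\to V$ with $a\in V_\alpha$, $\alpha<\lambda$, from representing a set of large rank ($j(V_\alpha)=[c_{V_\alpha}]_\TTT$ is already such a set), so ``$V_\lambda\notin M$'' is essentially as hard as the statement you started from and is left unargued.

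The missing idea is the paper's counting/stationarity argument. Suppose $[u]_\TTT=j[V_\lambda]$ with $u:O_a\to V$ and $a\in V_\alpha$, $\alpha<\lambda$. Cut down by $v(f)=u(f)\cap V_{\alpha+1}$, so that $[v]_\TTT=j[V_\lambda]\cap j(V_{\alpha+1})=j[V_{\alpha+1}]=[\dom_{V_{\alpha+1}}]_\TTT$ by Proposition~\ref{prop:rappcanonici}(4). \L o\'s's Theorem then puts $A=\bp{f\in O_{V_{\alpha+1}}: v(\pi_a(f))=\dom(f)}$ in $F_{V_{\alpha+1}}$, whence $\dom[A]\subseteq\ran(v)$ has cardinality at most $\vp{O_a}<\vp{V_{\alpha+1}}$; by Lemma~\ref{lem:small_ns} it is non-stationary, contradicting Proposition~\ref{prop:largeness}. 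The step your proposal lacks is precisely this use of the canonical representative $\dom_{V_{\alpha+1}}$ together with the fact that $\dom[A]$ must be stationary for every $A$ in the filter: that is what converts ``$u$ lives on a domain of rank $<\lambda$'' into an actual contradiction, and no completeness of the filters is needed for it.
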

\begin{proof}
	Since $\dom_{V_\alpha}$ represents $j[V_\alpha]$ for all $\alpha < \lambda$, $j[V_\alpha] \in M$ for all $\alpha < \lambda$ and we only need to prove that $j[V_\lambda] \notin M$. Suppose by contradiction that $u : O_a \to V$ is such that $[u]_\TTT = j[V_\lambda]$. Let $\alpha < \lambda$ be such that $a \in V_\alpha$, and let $v : O_a \to V$ be such that $v(x) = u(x) \cap V_{\alpha+1}$. Thus $[v]_\TTT = j[V_\lambda] \cap j(V_{\alpha+1}) = j[V_{\alpha+1}]$,  and by \L o\'s Theorem
	\[
	A = \bp{f \in O_{V_{\alpha+1}} : ~ v(\pi_a(f)) = \dom(f)} \in F_{V_{\alpha+1}}
	\]
	Since $\vp{\dom[A]} \leq \vp{\ran(v)} \leq \vp{O_a} < \vp{V_{\alpha+1}}$, $\dom[A]$ is a non-stationary subset of $V_{\alpha+1}$ by Lemma \ref{lem:small_ns} contradicting Proposition \ref{prop:largeness}.
\end{proof}

We now consider the relationship between different $\CCC$-systems of ultrafilters derived from a single $j$.

\begin{proposition}\label{prop:s1_subset_s2}
	Let $j: V \to M \subseteq W$ be a generic elementary embedding definable in $W$, $\CCC_1 \subseteq \CCC_2$ be directed  sets of domains in $V$, $\SSS_n$ be the $\CCC_n$-system of $V$-ultrafilters derived from $j$ for $n = 1, 2$. Then $\Ult(V,\SSS_2)$ can be factored into $\Ult(V,\SSS_1)$, and $\crit(k_1) \leq \crit(k_2)$ where $k_1$, $k_2$ are the corresponding factor maps.
\end{proposition}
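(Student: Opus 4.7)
The plan is to exhibit an intermediate factor map $k_{12}: \Ult(V, \SSS_1) \to \Ult(V, \SSS_2)$ fitting into a commutative triangle $k_1 = k_2 \circ k_{12}$, and then to deduce the critical point inequality from the general identity $\crit(k_2 \circ k_{12}) = \min(\crit(k_{12}), \crit(k_2))$ for composable elementary embeddings of models of $\ZFC$.

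The key observation that makes the whole setup work is that for every $a \in \CCC_1$ the filters $F_a^{\SSS_1}$ and $F_a^{\SSS_2}$ are literally the same subset of $\PPP^V(O_a)$: by Definition~\ref{def:derived_csu} both are $\bp{A \subseteq O_a : (j\res a)^{-1} \in j(A)}$. Consequently I can define
\[
\begin{array}{rrcl}
k_{12}: & \Ult(V, \SSS_1) & \longrightarrow & \Ult(V, \SSS_2) \\
        & [u : O_a \to V]_{\SSS_1} & \longmapsto & [u]_{\SSS_2}
\end{array}
\]
because every $u$ with $a \in \CCC_1$ belongs to $U_{\SSS_2}$ (as $\CCC_1 \subseteq \CCC_2$), and $u =_{\SSS_1} v$ (respectively $u \in_{\SSS_1} v$) holds precisely when $u =_{\SSS_2} v$ (respectively $u \in_{\SSS_2} v$): both relations are witnessed by membership of the same subset of $O_c$ in the common filter $F_c^{\SSS_1} = F_c^{\SSS_2}$, where $c = a \cup b \in \CCC_1$. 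Elementarity of $k_{12}$ then follows by applying \L o\'s's Theorem~\ref{thm:los} on both sides.

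Next I would verify $k_1 = k_2 \circ k_{12}$ by the direct computation
\[
k_2(k_{12}([u]_{\SSS_1})) = k_2([u]_{\SSS_2}) = j(u)((j \res a)^{-1}) = k_1([u]_{\SSS_1}),
\]
using Definition~\ref{def:factormap}. For the critical point inequality I appeal to the fact that if $g, h$ are composable elementary embeddings of models of $\ZFC$ then $\crit(h \circ g) = \min(\crit(g), \crit(h))$: the inequality $\geq$ is immediate since both factors fix everything below this minimum, while the inequality $\leq$ uses that elementary embeddings between models of $\ZFC$ are non-decreasing on ordinals, so whichever of the two critical points is smaller is actually moved by the composition. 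Applied to $k_1 = k_2 \circ k_{12}$ this yields $\crit(k_1) \leq \crit(k_2)$.

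No serious obstacle is expected: the whole argument is essentially bookkeeping around Definition~\ref{def:derived_csu}. The only subtle point to watch is the precise identification $F_a^{\SSS_1} = F_a^{\SSS_2}$ on $\PPP^V(O_a)$ for $a \in \CCC_1$, but this is an immediate unfolding of how both systems are derived from the same $j$.
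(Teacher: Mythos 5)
Your proposal is correct and follows essentially the same route as the paper: the paper defines the identical connecting map $[u]_{\SSS_1}\mapsto[u]_{\SSS_2}$, verifies $k_1=k_2\circ k$ by the same computation $j(u)((j\res a)^{-1})$, and concludes $\crit(k_1)\le\crit(k_2)$ from the commuting triangle. The only (harmless) difference is that you prove elementarity of the connecting map directly via \L o\'s's Theorem and the identity $F_a^{\SSS_1}=F_a^{\SSS_2}$ for $a\in\CCC_1$, whereas the paper deduces it from the elementarity of $k_1$ and $k_2$.
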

\begin{proof}
	We are in the following situation:
	\begin{center}
		\begin{tikzpicture}[xscale=3, yscale=1.2]
				\node (V) at (0, 0) {$V$};
				\node (M) at (2, 0) {$M$};
				\node (N1) at (1, -1.7) {$\Ult(V,\SSS_1)$};
				\node (N2) at (1, -0.5) {$\Ult(V, \SSS_2)$};
				\path (V) edge [->] node [auto] {$j$} (M);
				\path (V) edge [->] node  [auto,swap] {$j_1$} (N1);
				\path (V) edge [->] node  [auto,swap, pos = 0.8 ] {$j_2$} (N2);
				\path (N1) edge [->] node [auto,swap] {$k_1$} (M);
				\path (N2) edge [->] node [auto,swap, pos = 0.2 ] {$k_2$} (M);
				\path (N1) edge [->] node [auto,swap] {$k$} (N2);
		\end{tikzpicture}
	\end{center}
	where $k$ is defined as
	\[
	\begin{array}{rrcl}
		k: & \Ult(V,\SSS_1) & \longrightarrow & \Ult(V, \SSS_2) \\
		   & [u]_{\SSS_1} & \longmapsto & [u]_{\SSS_2}
	\end{array}
	\]
	Observe that $j_1$, $j_2$ and $k$ commute. Moreover given $u: O_a \to V$ with $a \in \CCC_1$,
	\[
	k_2 \circ k([u]_{\SSS_1}) = j(u)\cp{\cp{j \res a}^{-1}} = k_1([u]_{\SSS_1})
	\]
	therefore the diagram commutes. Since $k_1$ and $k_2$ are elementary, $k$ has to be elementary as well and $\crit(k_1) \leq \crit(k_2)$.
\end{proof}

Notice that the last proposition can be applied whenever $\SSS_1$ is an extender and $\SSS_2$ is a tower, both of the same length $\lambda$ and derived from the same generic elementary embedding $j : V \to M \subseteq W$. It is also possible for a ``thinner'' system of filters (i.e. an extender) to factor out a ``fatter'' one.

\begin{definition}
	Let $F$ be an ultrafilter. We denote by $\non(F)$ the minimum of $\vp{A}$ for $A \in F$. Let $\SSS$ be a $\CCC$-system of ultrafilters. We denote by $\non(\SSS)$ the supremum of $\non(F_a)+1$ for $a \in \CCC$.
\end{definition}

If the length of $\SSS$ is a limit ordinal $\lambda$, $\non(\SSS)$ is bounded by $\beth_\lambda$. If $\EEE$ is a $\gamma$-extender of regular length $\lambda > \gamma$, $\non(\SSS)$ is also bounded by $2^{{<}\lambda} + 1$.

\begin{theorem} \label{thm:e_dominates_s}
	Let $\CCC$ be a directed set of domains. Let $j: V \to M \subseteq W$ be a generic elementary embedding definable in $W$, $\SSS$ be the $\CCC$-system of filters derived from $j$, $\EEE$ be the extender of length $\lambda \supseteq j[\non(\SSS)]$ derived from $j$. Then $\Ult(V,\EEE)$ can be factored into $\Ult(V,\SSS)$, and $\crit(k_\SSS) \leq \crit(k_\EEE)$.
\end{theorem}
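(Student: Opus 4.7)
The plan is to mimic the factor-map construction underlying Propositions~\ref{prop:factormap} and~\ref{prop:s1_subset_s2}: I will build an elementary $k: \Ult(V,\SSS) \to \Ult(V,\EEE)$ with $k_\EEE \circ k = k_\SSS$, and derive the critical-point inequality from this identity by a standard argument.

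The core of the proof is showing that every $k_\SSS([u]_\SSS) = j(u)((j\res a)^{-1}) \in M$ (for $u: O_a \to V$ in $V$ with $a \in \CCC$) lies in $\ran(k_\EEE)$. Pick $A \in F_a$ of minimum cardinality $\gamma = \non(F_a)$ and a bijection $h: \gamma \to A$ in $V$. By Proposition~\ref{prop:induced_system} applied to $\SSS$, $A \in F_a$ gives $(j\res a)^{-1} \in j(A)$, so there is a unique $\eta_0 < j(\gamma)$ with $j(h)(\eta_0) = (j\res a)^{-1}$. Since $\gamma < \non(\SSS)$ by definition of $\non(\SSS)$, the hypothesis $j[\non(\SSS)] \subseteq \lambda$ forces $j(\gamma) < \lambda$, hence $\eta_0 < \lambda$ and $b = \{\eta_0\} \in [\lambda]^{{<}\omega}$. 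Now define $v \in V$ by $v: O_b \to V$, $v(f) = u(h(f(\eta_0)))$ when $f(\eta_0) < \gamma$ (arbitrary otherwise). Unfolding,
\[
k_\EEE([v]_\EEE) = j(v)\bigl((j\res b)^{-1}\bigr) = j(u)\bigl(j(h)(\eta_0)\bigr) = j(u)\bigl((j\res a)^{-1}\bigr) = k_\SSS([u]_\SSS).
\]

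Setting $k([u]_\SSS) := [v]_\EEE$ yields a well-defined map (independent of the choices of $A$ and $h$) by injectivity of the elementary $k_\EEE$. The identity $k_\EEE \circ k = k_\SSS$ is then immediate, and elementarity of $k$ follows from elementarity of both $k_\SSS$ and $k_\EEE$ by an argument analogous to the one in Proposition~\ref{prop:factormap}(1). For the critical-point inequality, let $\alpha = \crit(k_\EEE)$. Elementarity of $k$ gives $k(\alpha) \geq \alpha$, and $k_\EEE$ being order-preserving on the ordinals with $k_\EEE(\alpha) > \alpha$ yields $k_\SSS(\alpha) = k_\EEE(k(\alpha)) \geq k_\EEE(\alpha) > \alpha$; thus $\crit(k_\SSS) \leq \alpha = \crit(k_\EEE)$.

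The main obstacle is the key representation step in the middle paragraph: namely, guaranteeing that the ordinal $\eta_0$ locating $(j\res a)^{-1}$ inside $j(A)$ through a $V$-enumeration of a minimal-size $A \in F_a$ actually lies below $\lambda$. This is precisely where the hypothesis $\lambda \supseteq j[\non(\SSS)]$ is essential, applied via elementarity of $j$ to the cardinality bound $\gamma < \non(\SSS)$; the remainder of the argument is routine diagram chasing.
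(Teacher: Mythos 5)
Your proof is correct and takes essentially the same approach as the paper's: both enumerate a minimal-cardinality $A \in F_a$ in $V$, locate $(j\res a)^{-1}$ inside $j(A)$ at an index $\eta_0 < j(\non(F_a)) < \lambda$ (exactly where the hypothesis $\lambda \supseteq j[\non(\SSS)]$ enters), and set $k([u]_\SSS)$ equal to the $\EEE$-class of $u$ composed with the enumeration and evaluation at that index, then read off everything from $k_\EEE \circ k = k_\SSS$. The paper's $\rho_a$ and $\beta$ play the roles of your $h$ and $\eta_0$; your added remarks on well-definedness via injectivity of $k_\EEE$ and on deriving $\crit(k_\SSS)\leq\crit(k_\EEE)$ are details the paper leaves implicit.
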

\begin{proof}
	Let $\rho_a : \qp{\non(F_a)}^1 \to O_a$ be an enumeration of an $A \in F_a$ of minimum cardinality, so that $\cp{j \res a}^{-1} \in j(A) = \ran(j(\rho_a))$. Let $k$ be defined by
	\[
	\begin{array}{rrcl}
		k: & \Ult(V,\SSS) & \longrightarrow & \Ult(V, \EEE) \\
		   & [u : O_a \to V]_{\SSS} & \longmapsto & [u \circ \rho_a \circ \ran_{\bp{\beta}}]_{\EEE}
	\end{array}
	\]
	where $\beta < j(\non(F_a)) \leq \lambda$ is such that $j(\rho_a)(\bp{\beta}) = \cp{j \res a}^{-1}$. We are in the following situation:
	\begin{center}
		\begin{tikzpicture}[xscale=3, yscale=1.2]
				\node (V) at (0, 0) {$V$};
				\node (M) at (2, 0) {$M$};
				\node (N1) at (1, -1.7) {$\Ult(V,\SSS)$};
				\node (N2) at (1, -0.5) {$\Ult(V, \EEE)$};
				\path (V) edge [->] node [auto] {$j$} (M);
				\path (V) edge [->] node [auto,swap] {$j_\SSS$} (N1);
				\path (V) edge [->] node [auto,swap, pos = 0.8] {$j_\EEE$} (N2);
				\path (N1) edge [->] node [auto,swap] {$k_\SSS$} (M);
				\path (N2) edge [->] node [auto,swap, pos= 0.2] {$k_\EEE$} (M);
				\path (N1) edge [->] node [auto,swap] {$k$} (N2);
		\end{tikzpicture}
	\end{center}
	Observe that $j_\SSS$, $j_\EEE$ and $k$ commute. Moreover given $u: O_a \to V$ with $a \in \CCC$,
	\begin{align*}
		k_\EEE \circ k([u]_{\SSS}) &= j(u \circ \rho_a \circ \ran_{\bp{\beta}})\cp{\cp{j \res \bp{\beta}}^{-1}} \\
		&= j(u \circ \rho_a)(\bp{\beta}) \\
		&= j(u)\cp{\cp{j \res a}^{-1}} = k_\SSS([u]_\SSS)
	\end{align*}
	therefore the diagram commutes. Since $k_\SSS$ and $k_\EEE$ are elementary, $k$ has to be elementary as well and $\crit(k_\SSS) \leq \crit(k_\EEE)$.
\end{proof}

The last proposition with $j = j_\SSS$ shows that from any $\CCC$-system of filters $\SSS$ can be derived an extender $\EEE$ of sufficient length such that $\Ult(V, \SSS) = \Ult(V, \EEE)$. The derived extender $\EEE$ might have the same length as $\SSS$, e.g. when $\lambda = \beth_\lambda$ and $j[\lambda] \subseteq \lambda$. In particular, this happens in the notable case when $\SSS$ is the full stationary tower of length $\lambda$ a Woodin cardinal.

	\section{Generic large cardinals} \label{sec:generic_lc}

Generic large cardinal embeddings are analogous to classical large cardinal embeddings. The difference between the former and the latter is that the former is definable in some forcing extension of $V$ and not in $V$ itself as the latter. An exhaustive survey on this topic is given in \cite{foreman:generic_embeddings}. Most of the large cardinal properties commonly considered can be built from the following basic blocks.

\begin{definition}\label{definition:glce}
	Let $V\subseteq W$ be transitive models of $\ZFC$. Let $j: V \to M \subseteq W$ be a generic elementary embedding with critical point $\kappa$. We say that
	\begin{itemize}
		\item $j$ is $\gamma$-tall iff $j(\kappa) \geq \gamma$;\footnote{We remark that the present definition of $\gamma$-tall for an embedding does not coincide with the classical notion of $\gamma$-tall for cardinals, which is witnessed (in the present terms) by a \emph{$\gamma$-tall and $\kappa$-closed} embedding with critical point $\kappa$. We believe that the present definition is more convenient to our purposes since it avoids overlapping of concepts and simplifies the corresponding combinatorial version for $\CCC$-systems of filters.}
		\item $j$ is $\gamma$-strong iff $V_\gamma^W \subseteq M$;
		\item $j$ is ${<}\gamma$-closed iff ${}^{{<}\gamma}M \subseteq M$ from the point of view of $W$.
	\end{itemize}
\end{definition}

Notice that the definition of a large cardinal property through the existence of an embedding $j$ with (some version of) the above properties \emph{is not a first-order statement}, since it quantifies over a \emph{class} object. In the theory of large cardinals in $V$, this problem is overcome by showing that an extender $\EEE$ of sufficient length is able to capture all the aforementioned properties of $j$ in $j_\EEE$. For generic large cardinals the same can be done with some additional limitations, as shown in Section \ref{ssec:lc_firstorder}.

In contrast with the classical case, this process requires the use of $\CCC$-systems of ultrafilters \emph{in some generic extension}. However, it would be a desirable property to be able to obtain a description of such generic elementary embeddings from objects living in $V$. Natural intuition suggest the feasibility of this option (see e.g. \cite{larson:stationary_tower}). We thus introduce the following definition schema (already suggested in \cite{claverie:ideal_extenders}).
 
\begin{definition}[Claverie] \label{def:ideally_lc}
	Let $P$ be a large cardinal property of an elementary embedding (i.e. a first-order property in the class parameter $j$), $\kappa$ be a cardinal. We say that $\kappa$ has property $P$ iff there exists an elementary embedding $j : V \to M \subseteq V$  with critical point $\kappa$ and satisfying property $P$.

	We say that $\kappa$ has \emph{generically} property $P$ iff there exists a forcing extension $V[G]$ and an elementary embedding $j : V \to M \subseteq V[G]$  definable in $V[G]$ and with critical point $\kappa$ satisfying property $P$.

	We say that $\kappa$ has \emph{ideally} property $P$ iff there exist a $\CCC$-system of filters $\SS$ in $V$ such that the corresponding generic ultrapower embedding $j_{\dot{\ff}(\SS)}$ satisfies property $P$ in the corresponding generic extension.
\end{definition}

Observe that for any $\kappa$, $P(\kappa) \Rightarrow \text{ideally } P(\kappa) \Rightarrow \text{generically } P(\kappa)$. On the other side, it is not clear whether $\text{generically } P(\kappa) \Rightarrow \text{ideally } P(\kappa)$ as pointed out in \cite{claverie:ideal_extenders, cody:cox:generically_strong}. In \cite{a:thesis} an example is given suggesting that the natural procedure of inducing a $\CCC$-system of filters in $V$ from a generic elementary embedding might fail to preserve large cardinal properties, thus giving some hints against the equivalence of these two concepts.

\begin{theorem}[\cite{a:thesis}] \label{thm:failure}
	Let $\delta$ be a Woodin cardinal. Then for any $\kappa \in [\omega_1, \delta)$ there is a generically superstrong embedding $j$ with critical point $\kappa$ such that the tallest tower derivable from $j$ embeds in the original forcing in a densely incomplete way.
\end{theorem}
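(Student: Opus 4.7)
The plan is to use Woodin's full stationary tower $\PP_{{<}\delta}$ as the ``original forcing'' and to produce $j$ as the generic ultrapower induced by a suitably chosen generic filter. By standard Woodin cardinal theory (see \cite{larson:stationary_tower}), for any regular $\kappa\in[\omega_1,\delta)$ there is a condition $S_0\in\PP_{{<}\delta}$ forcing that the generic embedding $j:V\to M\subseteq V[G]$ has critical point $\kappa$ and is generically superstrong, i.e.\ $V_{j(\kappa)}^{V[G]}\subseteq M$. Fix such $S_0$ and work below it; this gives the desired generically superstrong embedding.

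Next I would identify the tallest tower $\TTT$ derivable from $j$. By Proposition \ref{prop:tallest_tower} $\TTT$ is a $V_\lambda$-system of $V$-ultrafilters for the least $\lambda$ with $j[V_\lambda]\notin M$; superstrongness pins $\lambda$ to a value at most $j(\kappa)$, in particular $\lambda<\delta$ in $V[G]$. Let $\TT=\ii(\TTT)\in V$ be the corresponding system of ideals in $V$, set $\CC=\TT^+/{=}_\TT$, and consider the immersion $i_{\dot{\ff}(\TT)}:\CC\to\RO(\PP_{{<}\delta})$ produced by the machinery of Section \ref{ssec:filters_generic}. By construction $\dot{\ff}(\TT)$ is the $\CC$-name for the canonical generic tower, and $i_{\dot{\ff}(\TT)}$ is a boolean morphism; what needs to be shown is that it fails to be a complete morphism on a dense set of conditions of $\PP_{{<}\delta}$.

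To witness the density of the failure, I would exploit that $\PP_{{<}\delta}$ has conditions of arbitrarily large rank below $\delta$ while $\CC$ only sees domains in $V_\lambda$. Given any $p\in\PP_{{<}\delta}$ below $S_0$, I would produce a maximal antichain $\{p_\alpha:\alpha<\mu\}\subseteq\PP_{{<}\delta}$ below $p$, living at ranks strictly above $\lambda$, such that each $p_\alpha$ decides a specific piece of information about $j[V_\lambda]$ which is not coded by any condition in $\CC$. The corresponding antichain in $\CC$ obtained by pulling back through $i_{\dot{\ff}(\TT)}$ will then have supremum strictly smaller than the pullback of $p$, because any $B\in\TT^+$ forcing $V_{j(\kappa)}^{V[G]}\subseteq M$ cannot decide the external data coded by the $p_\alpha$'s. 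Hence $i_{\dot{\ff}(\TT)}$ is not complete below $p$; since $p$ was arbitrary below $S_0$, the incompleteness is dense.

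The main obstacle is the combinatorial step in the previous paragraph, namely producing the antichain $\{p_\alpha\}$ and verifying that its image in $\CC$ has a sup strictly below the image of $p$. This requires using Woodiness of $\delta$ to reflect a potential witness of completeness down to some $V_\eta$ with $\lambda\leq\eta<\delta$, and then using the superstrongness of $j$ to extract a concrete stationary refinement at rank $\eta$ which cannot arise as the projection of a condition of $\CC$. Once this separation lemma is established, the density statement follows routinely from the fact that the argument can be relativised below any $p\leq S_0$ by replacing $\delta$ with the first Woodin above $\rank(p)$ inside $\delta$.
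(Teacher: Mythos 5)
There is a preliminary point: the paper does not actually prove Theorem~\ref{thm:failure} --- it is quoted from \cite{a:thesis} --- so there is no internal proof to compare against. Judged on its own terms, your proposal has a fatal structural flaw. You take the ``original forcing'' to be the stationary tower $\PP_{{<}\delta}$ itself and $j$ to be its generic ultrapower. But the generic filter for $\PP_{{<}\delta}$ \emph{is} a tower of $V$-ultrafilters of length $\delta$, and by the paper's own Proposition~\ref{prop:tallest_tower} the tallest tower derivable from $j$ is then exactly that tower: $j[V_\alpha]$ is represented by $\dom_{V_\alpha}$ for every $\alpha<\delta$, so the least $\lambda$ with $j[V_\lambda]\notin M$ is $\delta$, not some $\lambda<\delta$. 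Your claim that ``superstrongness pins $\lambda$ to a value at most $j(\kappa)$, in particular $\lambda<\delta$'' is false twice over: for the stationary tower one has $j(\kappa)=\delta$, so the bound is vacuous, and in any case superstrongness says nothing about membership of $j[V_\lambda]$ in $M$. Consequently $\ii$ of the tallest derivable tower is the non-stationary tower of height $\delta$, its quotient $\CC=\TT^+/{=}_\TT$ is $\PP_{{<}\delta}$ itself, and the immersion map $i_{\dot{\ff}(\TT)}$ sends $[A]$ to $\Qp{\check A\in\dot G}=[A]$, i.e.\ it is (a dense, complete) identity embedding. So your construction produces precisely a $j$ for which the conclusion of the theorem \emph{fails}. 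The whole content of the theorem is that one must manufacture the superstrong $j$ from a forcing that is strictly richer than every tower it induces; choosing the forcing to be a tower forecloses that from the start.

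Even setting aside the choice of forcing, the third paragraph is not an argument but a restatement of the goal. The assertion that each $p_\alpha$ ``decides a specific piece of information about $j[V_\lambda]$ which is not coded by any condition in $\CC$'', and that the pulled-back antichain therefore has supremum strictly below the pullback of $p$, is exactly the separation one is asked to prove; you supply neither the antichain nor any reason why a condition of $\CC$ cannot code the relevant data, and you acknowledge as much by calling it ``the main obstacle''. The closing remark about relativising below $p$ by ``replacing $\delta$ with the first Woodin above $\rank(p)$ inside $\delta$'' also presupposes unboundedly many Woodin cardinals below $\delta$, which is not part of the hypothesis. As it stands the proposal starts from a forcing for which the statement is false and leaves the only nontrivial step unproved.
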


Furthermore, having ideally property $P$ can be much weaker than having property $P$ in $V$: e.g. the consistency of an ideally $I_1$ cardinal follows from the consistency of a Woodin cardinal \cite{larson:stationary_tower}. Nonetheless, upper bounds on the consistency of generic large cardinals similar to those for classical large cardinals can be proven (see \cite{hamkins:kunen_inconsistency, suzuki:non_existence}), e.g. the inconsistency of a set-generic Reinhardt cardinal.
Since from a stationary tower of height a Woodin cardinal we can obtain a \emph{class}-generic Reinhardt cardinal, it is clear that the strength of a generic large cardinal very much depends on the nature of the forcing allowed to obtained it. In fact, the strength of a generic large cardinal hypothesis depends on the interaction of \emph{three parameters}, as outlined in \cite{foreman:generic_embeddings}: the size of the critical point, the closure properties of the embedding, and the nature of the forcing used to define it.
We shall not expand on the impact of the nature of forcing, while we shall spend some time on the \emph{size} of the critical point. In this setting, the trivial observation that $P(\kappa) \Rightarrow \text{ideally } P(\kappa) \Rightarrow \text{generically } P(\kappa)$ is not really satisfying, since we are interested in the consistence of \emph{small} cardinals $\kappa$ having ideally (or generically) property $P$.
However, it is sometimes possible to collapse a large cardinal in order to obtain a small generic large cardinal. Examples of positive results on this side can be found in \cite{claverie:ideal_extenders, cody:cox:generically_strong, foreman:quotient_embeddings, kakuda:generic_precipitous, magidor:precipitous}, we present and generalize some of them in Section \ref{ssec:consistency}.

Notice that having ideally property $P$ is inherently a statement on the structure of the relevant $\CCC$-system of filters $\SS$ in question. In Section \ref{ssec:combinatoric} we provide a characterization of these properties as combinatorial statements on $\SS$.

Since having a generic large cardinal property is possibly weaker than having the same property in $V$, two large cardinal properties which are inequivalent for classical large cardinals may turn out to be equivalent for their generic counterparts. In Section \ref{ssec:distinction_glcp} we show some examples of embeddings separating different generic large cardinal properties. These examples are an application of the techniques introduced throughout all this section.


\subsection{Deriving large cardinal properties from generic systems of filters} \label{ssec:lc_firstorder}

All over this section $G$ is $V$-generic for some forcing $\BB$ and $j: V \to M \subseteq V[G]$ is a generic elementary embedding definable in $V[G]$ with some large cardinal property $P$ and critical point $\kappa$. We aim to approximate $j$ via a suitable $\CCC$-system of $V$-ultrafilters $\SSS$ in $V[G]$ (with $\CCC\in V$) closely enough so as to preserve the large cardinal property in question. 

\begin{proposition}
	Let $j$ be $\gamma$-tall, $\CCC\in V$ be a directed set of domains with $\lambda\subseteq\bigcup\CCC$, $\SSS$ be the $\CCC$-system of ultrafilters of length $\lambda \geq j(\kappa)$ derived from $j$. Then $j_\SSS$ is $\gamma$-tall.
\end{proposition}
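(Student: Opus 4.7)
The plan is to use the factor map $k \colon \Ult(V,\SSS) \to M$ provided by Proposition~\ref{prop:factormap} and to track where $j_\SSS(\kappa)$ sits relative to $\lambda$.

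First I would invoke Proposition~\ref{prop:factormap} to obtain $k$ satisfying $k \circ j_\SSS = j$ together with $k \res \bigcup \CCC = \id \res \bigcup \CCC$. Since the hypothesis $\lambda \subseteq \bigcup \CCC$ says that every ordinal below $\lambda$ lies in $\bigcup \CCC$, the map $k$ fixes all such ordinals, so $\crit(k) \geq \lambda$. In particular, $k(j_\SSS(\kappa)) = j(\kappa)$, and $\gamma$-tallness of $j$ gives $j(\kappa) \geq \gamma$.

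Next I would split into two cases according to the size of $j_\SSS(\kappa)$. If $j_\SSS(\kappa) < \lambda$, then $j_\SSS(\kappa)$ lies strictly below $\crit(k)$, so $k$ fixes it, and hence $j_\SSS(\kappa) = k(j_\SSS(\kappa)) = j(\kappa) \geq \gamma$, as desired. If instead $j_\SSS(\kappa) \geq \lambda$, the chain $\lambda \geq j(\kappa) \geq \gamma$ supplied by the hypotheses directly yields $j_\SSS(\kappa) \geq \lambda \geq \gamma$.

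Since the two cases are exhaustive and both conclude $j_\SSS(\kappa) \geq \gamma$, the embedding $j_\SSS$ is $\gamma$-tall by Definition~\ref{definition:glce}. I do not expect any real obstacle here: the only subtlety is recognising that the factor map's critical point bound automatically aligns with the length $\lambda$, which is immediate once $\lambda \subseteq \bigcup \CCC$ is noted; the rest is a two-line case analysis on whether $k$ moves $j_\SSS(\kappa)$ or not.
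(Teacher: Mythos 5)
Your proof is correct and follows essentially the same route as the paper: both rest on Proposition~\ref{prop:factormap}, the identity $k \circ j_\SSS = j$, and the bound on $\crit(k)$ coming from $\lambda \subseteq \bigcup \CCC$. The only cosmetic difference is that the paper invokes the strict bound $\crit(k) > j(\kappa)$ (using clause~4 of Proposition~\ref{prop:factormap} in the boundary case $\lambda = j(\kappa)$) to conclude $j_\SSS(\kappa) = j(\kappa)$ outright, whereas your case split on whether $j_\SSS(\kappa) < \lambda$ reaches the same conclusion using only $\crit(k) \geq \lambda$.
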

\begin{proof}
	By Proposition \ref{prop:factormap}, $\crit(k) > j(\kappa)$ hence $j_\SSS(\kappa) = k(j_\SSS(\kappa)) = j(\kappa) > \gamma$.
\end{proof}

\begin{proposition}\label{prop:sys_superstrong}
	Let $j$ be $\gamma$-strong and $\lambda$ be such that either $\lambda > \gamma$ or $\lambda = j(\mu) = \gamma$ for some $\mu$. Let $\CCC\in V$ be a directed set of domains with $\lambda\subseteq\bigcup\CCC$, and $\SSS$ be the $\CCC$-system of ultrafilters derived from $j$. Then $j_\SSS$ is $\gamma$-strong; i.e. $V_\gamma^{\Ult(V,\SSS)} = V_\gamma^{V[\SSS]} = V_\gamma^{V[G]}$.
\end{proposition}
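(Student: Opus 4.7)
The plan is to apply the factor map machinery of Proposition~\ref{prop:factormap} and then sandwich the three models. I will first show that $\gamma$-strongness of $j$ yields $V_\gamma^{V[G]} = V_\gamma^M$, then use the factor map $k\colon \Ult(V,\SSS)\to M$ to transfer this equality into $\Ult(V,\SSS)$, and finally observe that $V_\gamma^{V[\SSS]}$ is trapped between the other two.

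First I would note that, since $M\subseteq V[G]$ and ranks are absolute between transitive models of $\ZF$, the inclusion $V_\gamma^M\subseteq V_\gamma^{V[G]}$ is automatic, and the $\gamma$-strongness hypothesis $V_\gamma^{V[G]}\subseteq M$ upgrades this to $V_\gamma^M = V_\gamma^{V[G]}$. Next, let $k\colon \Ult(V,\SSS)\to M$ be the factor map of Definition~\ref{def:factormap}, which by Proposition~\ref{prop:factormap} is elementary and satisfies $\crit(k)\geq\lambda$. I would now split on the two hypotheses: if $\lambda>\gamma$ then immediately $\crit(k)>\gamma$; if instead $\lambda = j(\mu) = \gamma$, then part~(4) of Proposition~\ref{prop:factormap} (applied with its $\gamma$ being our $\mu$) gives $\crit(k)>\lambda = \gamma$. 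In either case $\crit(k)>\gamma$.

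From $\crit(k)>\gamma$ I would invoke the standard fact that an elementary embedding whose critical point exceeds $\gamma$ fixes $V_\gamma$ of the source pointwise and induces $V_\gamma^{\Ult(V,\SSS)} = V_\gamma^M$. Concretely, by $\in$-induction on rank one shows $k(x)=x$ for every $x$ of $\Ult(V,\SSS)$-rank below $\gamma$: the inductive step uses that $k[x]\subseteq k(x)$, together with elementarity applied to the statement $\rank(k(x)) = k(\rank(x)) = \rank(x)$ to conclude $k(x)\subseteq V_\gamma^M$ and that every element of $k(x)$ is of the form $k(y)$ for some $y\in x$. Hence $V_\gamma^{\Ult(V,\SSS)}\subseteq V_\gamma^M$ via the identity, and the reverse inclusion follows symmetrically using that $k\res V_\gamma^{\Ult(V,\SSS)}$ is the identity on both models' common $V_\gamma$.

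Finally I would close the argument by chaining inclusions. Since $\Ult(V,\SSS)\subseteq V[\SSS]\subseteq V[G]$ (the first because $j_\SSS$ is definable from $\SSS$ in $V$, the second by definition of $V[\SSS]$) and rank is absolute across these transitive models, we get
\[
V_\gamma^{\Ult(V,\SSS)}\;\subseteq\;V_\gamma^{V[\SSS]}\;\subseteq\;V_\gamma^{V[G]}.
\]
Combining with the already established chain $V_\gamma^{\Ult(V,\SSS)} = V_\gamma^M = V_\gamma^{V[G]}$ forces all three sets to coincide, which is exactly the $\gamma$-strongness of $j_\SSS$. The only subtle step is getting $\crit(k)>\gamma$ on the nose in the boundary case $\lambda=\gamma$; this is why the hypothesis insists on the extra information $\gamma = j(\mu)$, which unlocks the strict inequality in Proposition~\ref{prop:factormap}(4) rather than the weaker $\crit(k)\geq\lambda=\gamma$ that would leave $V_\gamma$ possibly moved.
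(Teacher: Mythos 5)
Your proposal is correct and follows essentially the same route as the paper: invoke Proposition~\ref{prop:factormap} to get $\crit(k)>\gamma$ (using part~(4) in the boundary case $\lambda=j(\mu)=\gamma$), conclude $V_\gamma^{\Ult(V,\SSS)}=k(V_\gamma^{\Ult(V,\SSS)})=V_\gamma^M=V_\gamma^{V[G]}$, and then sandwich $V_\gamma^{V[\SSS]}$ between the two. You merely make explicit the rank-induction behind ``$\crit(k)>\gamma$ implies $k$ fixes $V_\gamma$ pointwise,'' which the paper leaves implicit.
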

\begin{proof}
	By Proposition \ref{prop:factormap} we have that $\crit(k) > \gamma$. Thus
	\[
	V_\gamma^{\Ult(V,\SSS)} = k(V_\gamma^{\Ult(V,\SSS)}) = V_\gamma^M = V_\gamma^{V[G]}.
	\]
	Furthermore, since $V_\gamma^{\Ult(V,\SSS)} \subseteq V_\gamma^{V[\SSS]} \subseteq V_\gamma^{V[G]}$ they must all be equal.\footnote{Once again $V[\SSS]$ is the minimal transitive model $N$ of $\ZFC$ such that $V,\bp{\SSS}\subseteq N\subseteq V[G]$.}
\end{proof}

While tallness and strongness are easily handled, in order to ensure preservation of closure we need some additional technical effort.

\begin{definition} \label{def:presaturation}
	A boolean algebra $\BB$ is ${<}\lambda$-presaturated is for any $\gamma < \lambda$ and family $\AAA = \ap{A_\alpha: ~ \alpha < \gamma}$ of maximal antichains of size $\lambda$, there are densely many $p \in \BB^+$ such that
	\[
	\forall \alpha < \gamma ~ \vp{\bp{a \in A_\gamma : ~ a \wedge p > 0}} < \lambda.
	\]
\end{definition}

\begin{proposition}
	Let $\lambda$ be a regular cardinal. A boolean algebra $\BB$ is ${<}\lambda$-presaturated if and only if it preserves the regularity of $\lambda$.
\end{proposition}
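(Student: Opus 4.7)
I would prove both directions by contrapositive, exploiting the natural correspondence between failure of presaturation and a generically cofinal map $\gamma \to \lambda$.

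For the forward direction, suppose $\BB$ is ${<}\lambda$-presaturated and toward a contradiction that some $p\in\BB^+$ forces $\cf(\check{\lambda})=\check{\gamma}<\check{\lambda}$, witnessed by a name $\dot{f}$ for a cofinal map $\check\gamma\to\check\lambda$. For each $\alpha<\gamma$ I would construct a maximal antichain $A_\alpha$ of $\BB$ of size $\lambda$ (padding by splitting if needed) whose elements below $p$ decide $\dot{f}(\alpha)$, assigning a value $\beta_\alpha(a)<\lambda$. Applying presaturation I obtain $q\leq p$ meeting each $A_\alpha$ in fewer than $\lambda$ pieces, so that $q$ forces each $\dot{f}(\alpha)$ into a set of ${<}\lambda$ ordinals below $\lambda$; by regularity of $\lambda$ in $V$ these sets are bounded, and taking the sup over the $\gamma<\lambda$ many bounds yields $\mu<\lambda$ with $q\Vdash\ran(\dot{f})\subseteq\check\mu+1$, contradicting cofinality.

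For the reverse direction, suppose $\BB$ preserves the regularity of $\lambda$ and that presaturation fails: there are $\gamma<\lambda$, maximal antichains $\ap{A_\alpha:\alpha<\gamma}$ of size $\lambda$, and $p\in\BB^+$ such that no $q\leq p$ meets each $A_\alpha$ in fewer than $\lambda$ elements. Fix in $V$ enumerations $A_\alpha=\bp{a_\alpha^\beta:\beta<\lambda}$, and in $V[G]$ (with $p\in G$) define $f:\gamma\to\lambda$ by sending $\alpha$ to the unique $\beta$ with $a_\alpha^\beta\in G$. The heart of the argument is to show $f$ is cofinal in $\lambda$: if some $q\in G$ with $q\leq p$ forced $f(\alpha)<\check\beta^*$ uniformly for a single $\beta^*<\lambda$, then $q$ would be incompatible with $a_\alpha^\xi$ for every $\xi\geq\beta^*$ (any common refinement would force $f(\alpha)=\check\xi$), confining $\bp{a\in A_\alpha:a\wedge q>0}$ to a set of size ${<}\lambda$ and contradicting the choice of $p$. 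Cofinality of $f$ then gives $\cf^{V[G]}(\lambda)\leq\gamma<\lambda$, the desired contradiction.

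The main obstacle I foresee is the bookkeeping in the forward direction: turning the antichains below $p$ that decide $\dot{f}(\alpha)$ into maximal antichains of $\BB$ of size exactly $\lambda$ as demanded by the stated definition. Once this is settled, the combinatorial content of both implications reduces to a single application of the regularity of $\lambda$ inside $V$.
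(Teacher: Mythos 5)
The paper states this proposition without proof, so there is nothing to compare against; judged on its own, your argument is the standard one and both directions are sound. The forward direction correctly reduces to two applications of the regularity of $\lambda$ in $V$ (once to bound each set of $<\lambda$ many candidate values of $\dot f(\alpha)$, once to take the supremum over $\gamma<\lambda$ many bounds), and the reverse direction's key computation --- that a bound $q\Vdash\ran(\dot f)\subseteq\check\beta^*$ forces $q$ to be incompatible with $a_\alpha^\xi$ for all $\xi\geq\beta^*$, so that $q$ would witness exactly the presaturation property assumed to fail below $p$ --- is exactly right. The bookkeeping obstacle you flag in the forward direction is real but is an artifact of the wording of Definition~\ref{def:presaturation}, which quantifies over maximal antichains of $\BB$ of size (exactly) $\lambda$: the antichains you actually need, namely $\bp{\Qp{\dot f(\check\alpha)=\check\beta}_\BB\wedge p : \beta<\lambda}\setminus\bp{\0}$, are maximal only below $p$ and have size at most $\lambda$. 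Under the reading the paper itself requires for its application in Lemma~\ref{lem:sequenzegeneriche} (antichains relative to a condition, of size at most $\lambda$; note also that any $\alpha$ for which this antichain has size $<\lambda$ needs no presaturation at all, since its set of possible values is already small), your argument goes through verbatim and no padding is needed. So this is a definitional wrinkle in the paper rather than a gap in your proof.
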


\begin{lemma} \label{lem:sequenzegeneriche}
	Let $\SSS$ be a $\CCC$-system of ultrafilters in $V[G]$ and $N = \Ult(V,\SSS)$ be such that:
	\begin{itemize}
		\item $V[G]$ is a ${<}\lambda^+$-cc forcing extension for some $\lambda$ regular in $V[G]$;
		\item $V_\lambda^{V[G]} = V_\lambda^N$ and $\CCC$ has length at least $\lambda$;
		\item $N$ is closed for ${<}\lambda$-sequences in $V$.
	\end{itemize}
	Then $N$ is closed for ${<}\lambda$-sequences in $V[G]$.
\end{lemma}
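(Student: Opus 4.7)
The plan is to decompose $\sigma$ into a short ordinal sequence $\tau$ living in $V_\lambda^{V[G]} = V_\lambda^N \subseteq N$, plus $V$-data whose $j_\SSS$-images are accessible inside $N$; the two pieces are then recombined to produce $\sigma \in N$.

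For the encoding, take $\sigma = \langle x_\alpha : \alpha < \mu\rangle$ with $\mu < \lambda$ and pick $V$-representatives $u_\alpha : O_{a_\alpha} \to V$ of each $x_\alpha = [u_\alpha]_\SSS$. Regularity of $\lambda$ in $V[G]$ (and hence $V$) together with $\mu < \lambda$ forces $\sup_\alpha \rank(a_\alpha) < \lambda$, so there exists a common $a \in \CCC$ with $a \supseteq a_\alpha$ for every $\alpha$ (e.g.\ $a = V_\beta$ for $\beta = \sup_\alpha \rank(a_\alpha) + 1$ in the tower case); lift each $u_\alpha$ via compatibility to a function $O_a \to V$ in $V$, still denoted $u_\alpha$ and preserving $[u_\alpha]_\SSS$. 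By $<\lambda^+$-cc I choose in $V$ sets $S_\alpha$ of size $\leq \lambda$ with $u_\alpha \in S_\alpha$; enumerate $S_\alpha = \{s^\alpha_\gamma : \gamma < |S_\alpha|\}$ in $V$ and let $q_\alpha < \lambda$ be the unique index with $u_\alpha = s^\alpha_{q_\alpha}$. Then $\tau = \langle q_\alpha : \alpha < \mu\rangle \in V[G]$ has $\rank(\tau) < \lambda$ by regularity, so $\tau \in V_\lambda^{V[G]} = V_\lambda^N \subseteq N$.

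For the assembly, define $H \in V$ by $H(\alpha,\gamma) = s^\alpha_\gamma$, so $j_\SSS(H) \in N$. Let $c < \lambda$ be an ordinal bounding $\mu$ and every $q_\alpha$, so that $c \in \CCC$; by Proposition~\ref{prop:rappcanonici}(5), both $j_\SSS \restriction c$ and $(j_\SSS \restriction a)^{-1} = [\id_a]_\SSS$ lie in $N$. Since $\SSS$ is the $\CCC$-system of ultrafilters derived from $j_\SSS$ (Proposition~\ref{prop:induced_system}), the identity $[u]_\SSS = j_\SSS(u)\bigl((j_\SSS \restriction a)^{-1}\bigr)$ holds for every $u : O_a \to V$ in $V$, yielding
\[
x_\alpha \;=\; j_\SSS(H)\bigl((j_\SSS \restriction c)(\alpha),\,(j_\SSS \restriction c)(q_\alpha)\bigr)\bigl((j_\SSS \restriction a)^{-1}\bigr)
\]
uniformly in $\alpha$, so Replacement inside $N$ delivers $\sigma = \{(\alpha,x_\alpha):\alpha<\mu\} \in N$.

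The main obstacle is the index-translation between the external ordinals $\alpha,q_\alpha$ composing $\tau$ and their $j_\SSS$-images required for the internal evaluation of $j_\SSS(H)$ inside $N$: it is handled by the fact that all relevant indices fit into a single $c \in \CCC$, for which $j_\SSS \restriction c$ is itself an $N$-element via Proposition~\ref{prop:rappcanonici}(5). A secondary point is the choice of a common bounding domain $a \in \CCC$ for the $a_\alpha$'s, which relies on regularity of $\lambda$ and on the closure properties of $\CCC$ implicit in the $V$-closure hypothesis on $N$ (and is automatic in the tower setting).
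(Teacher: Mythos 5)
Your overall strategy --- splitting $\sigma$ into a $V$-coded matrix of candidate representatives plus a small selector sequence $\tau$ sitting in $V_\lambda^{V[G]}=V_\lambda^N$, then reassembling inside $N$ --- is the same as the paper's, and your use of regularity of $\lambda$ to bound the realized indices $q_\alpha$ in place of an explicit presaturation argument is a reasonable variant. The assembly via $j_\SSS(H)$ and $[\id_a]_\SSS$ is sound once its ingredients are actually available in $N$.

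However, there is a genuine gap at the very first step: you need a single $a\in\CCC$ with $a\supseteq a_\alpha$ for all $\alpha<\mu$, and later you need the ordinal $c<\lambda$ to be an element of $\CCC$ so that Proposition \ref{prop:rappcanonici}(5) puts $j_\SSS\res c$ into $N$. Both require $\CCC$ to be ${<}\lambda$-directed (essentially that $V_\lambda\subseteq\CCC$), which is not among the hypotheses of the lemma and fails for ideal extenders, where $\CCC=[\lambda']^{{<}\omega}$ is closed only under finite unions and contains no infinite ordinal as an element. Your closing remark that the needed closure of $\CCC$ is ``implicit in the $V$-closure hypothesis on $N$'' is not correct: closure of $N$ under ${<}\lambda$-sequences from $V$ says nothing about unions existing in $\CCC$. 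Moreover, the claim $\sup_\alpha\rank(a_\alpha)<\lambda$ does not follow from regularity alone, since $\CCC$ may have length greater than $\lambda$ and the chosen representatives may have domains of rank at least $\lambda$. The paper's proof avoids all of this by never merging domains: it applies the $V$-closure hypothesis directly to the ${<}\lambda$-sized $V$-sequence of all candidate representatives $u^\alpha_\beta$ (with their original, varying domains), tagging each entry with the canonical representatives $\ran_{\bp{x}}$ of elements of $\CCC\cap V_\lambda$, so that the selection can be performed inside $N$ using a small set $Y\in V_\lambda^{V[G]}=V_\lambda^N$ with no appeal to $j_\SSS\res c$. Your argument does go through for towers (where $\CCC=V_{\lambda'}$), but as written it proves a strictly weaker statement than the lemma.
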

\begin{proof}
	Let $\dot{s}$ be the name for a sequence of length $\gamma < \lambda$ of elements of $N$. Since $\lambda$ is regular in $V[G]$, the forcing $\CC$ which defines $V[G]$ is ${<}\lambda$-presaturated. Moreover, $\CC$ is ${<}\lambda^+$-cc hence for any $\alpha < \gamma$ there are at most $\lambda$-many possibilities for $\dot{s}(\alpha)$. Therefore we can apply presaturation and find a condition $p \in G$ such that 
	\[
	p \Vdash \dot{s} = \bp{\ap{\ap{\alpha,[u^\alpha_\beta]_{\dot{\SSS}}},q^\alpha_\beta}: \alpha < \gamma, \beta < \mu}, 
	\]
	for some $\mu < \lambda$.
	
	Let $\ap{x_\alpha : ~ \alpha < \lambda}$ be a (partial) enumeration of $\CCC \cap V_\lambda$. Define $t: \gamma \times \mu \to V$ so that $t(\alpha, \beta) = \ap{u^\alpha_\beta, \ran_{\bp{x_\alpha}}, \ran_{\bp{x_\beta}}}$ is a sequence in $V$. Since $N$ is closed for sequences in $V$, the sequence represented by $t$ is in $N$; i.e. 
	\[
	X = \bp{\ap{[u^\alpha_\beta]_\SSS,\ap{\bp{x_\alpha}, \bp{x_\beta}}}: ~ \alpha < \gamma, \beta < \delta} \in N.
	\] 
	Moreover, $Y\in V[G]$ where $Y = \bp{\ap{\bp{x_\alpha}, \bp{x_\beta}}: ~ q^\alpha_\beta \in G}$. Since $Y \in V_\lambda^{V[G]} = V_\lambda^N$, inside $N$ we can define $\val_G(\dot{s}) = \bp{\ap{\alpha,[u^\alpha_\beta]_\SSS} \in N: ~ \exists y \in Y ~ \ap{u^\alpha_\beta,y} \in X}$.
\end{proof}

\begin{theorem} \label{thm:sys_closure}
	Let $j$ be ${<}\lambda$-closed with $\lambda$ regular cardinal, $V[G]$ be a ${<}\lambda^+$-cc forcing extension. Let $\CCC\in V$ be a ${<}\lambda$-directed set of domains in $V$, let $\SSS$ be the $\CCC$-system of filters derived from $j$. Then $j_\SSS$ is ${<}\lambda$-closed.
\end{theorem}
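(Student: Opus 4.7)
The plan is to apply Lemma~\ref{lem:sequenzegeneriche} to $N = \Ult(V, \SSS)$, which will directly yield ${<}\lambda$-closure of $N$ in $V[G]$, i.e.\ ${<}\lambda$-closure of $j_\SSS$. The ${<}\lambda^+$-cc hypothesis on the forcing giving $V[G]$ is part of our assumptions, so it remains to verify (i) $\CCC$ has length at least $\lambda$, (ii) $V_\lambda^{V[G]} = V_\lambda^N$, and (iii) $N$ is closed under ${<}\lambda$-sequences lying in $V$. For (i), I enlarge $\CCC$ if needed so that $\lambda\subseteq \bigcup\CCC$; this is harmless because ${<}\lambda$-directedness of $\CCC$ forces every $a \in \CCC$ to have size less than $\lambda$, so by ${<}\lambda$-closure of $j$ we have $(j\res a)^{-1}\in M$ for every $a \in \CCC$, and the derived system remains well-defined.

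For (ii), by Proposition~\ref{prop:factormap} the factor map $k\colon N\to M$ has $\crit(k)\geq\lambda$, since $\bigcup\CCC\supseteq\lambda$. In particular $k$ fixes every set of rank below $\lambda$, so $V_\lambda^N = V_\lambda^M$. For the other equality, any $x\in V_\lambda^{V[G]}$ has rank $\alpha<\lambda$, and its transitive closure can be enumerated by a sequence of length less than $\lambda$ in $V[G]$; by ${<}\lambda$-closure of $j$ this coding sequence lies in $M$, so $x\in V_\lambda^M$, giving $V_\lambda^M = V_\lambda^{V[G]}$.

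For (iii), consider $\vec x = \langle x_\alpha : \alpha < \gamma\rangle\in V$ with $\gamma<\lambda$ and $x_\alpha\in N$ for all $\alpha$. Apply $k$ componentwise to obtain $k\circ\vec x\in M^\gamma$, which lies in $M$ by ${<}\lambda$-closure of $j$. Because $\crit(k)\geq\lambda > \gamma$, the action of $k$ on each $x_\alpha$ is traceable through the ultrapower representation, and one can reconstruct inside $N$ a sequence of representative functions $\langle u_\alpha : \alpha < \gamma\rangle$ realizing $\vec x$ as an element of $N$.

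With (i)--(iii) in hand, Lemma~\ref{lem:sequenzegeneriche} upgrades ${<}\lambda$-closure of $N$ under $V$-sequences to ${<}\lambda$-closure under $V[G]$-sequences, which is the desired conclusion. The main obstacle will be step (iii): although morally a $V$-statement, extracting a uniform $V$-sequence of ultrapower representatives from $\vec x$ via the factor map requires the critical-point bound on $k$ together with the ${<}\lambda^+$-cc of the forcing, and must be reconciled with the fact that $\SSS\notin V$.
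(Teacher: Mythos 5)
Your overall architecture matches the paper's: show that $N=\Ult(V,\SSS)$ is closed under ${<}\lambda$-sequences lying in $V$, check $V_\lambda^N=V_\lambda^{V[G]}$, and then invoke Lemma~\ref{lem:sequenzegeneriche} to upgrade to sequences in $V[G]$. The problem is that your step (iii) is the entire content of the theorem, and for it you offer no argument --- only the assertion that the action of $k$ is ``traceable'' and that one ``can reconstruct'' a sequence of representatives inside $N$. This route does not work: knowing that $k\circ\vec{x}$ lies in $M$ (by ${<}\lambda$-closure of $j$) gives no way to pull the sequence back into $N$, since $k$ need not be surjective and $N$ may be a proper submodel of $M$; the whole difficulty is that $N$ could a priori miss sequences that $M$ contains. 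Neither the bound $\crit(k)\geq\lambda$ nor the ${<}\lambda^+$-cc (which you name as the needed ingredients for (iii)) addresses this --- the chain condition is used only inside Lemma~\ref{lem:sequenzegeneriche}, not here.

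What actually proves (iii) is a combinatorial construction inside the ultrapower, and it is the one place where ${<}\lambda$-directedness of $\CCC$ does real work --- a hypothesis your proposal never exploits for this purpose (your only use of it, to claim that every $a\in\CCC$ has size ${<}\lambda$, is false in general: closure under small unions does not bound the size of the elements). Given representatives $u_\alpha:O_{a_\alpha}\to V$ in $V$ for $\alpha<\gamma<\lambda$, directedness yields a single $b\in\CCC$ with $b\supseteq\bigcup_{\alpha<\gamma}a_\alpha$ and $\vp{b}\geq\gamma$; fixing an enumeration $\ap{x_\alpha:\alpha<\gamma}$ of $b$, one defines $v:O_b\to V$ by $v(f)=\bp{u_\alpha(\pi_{ba_\alpha}(f)):x_\alpha\in\dom(f)}$. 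Fineness guarantees that each $[u_\alpha]_\SSS$ appears in $[v]_\SSS$, and normality guarantees that nothing else does, so $[v]_\SSS=\bp{[u_\alpha]_\SSS:\alpha<\gamma}\in N$. Without this (or an equivalent) construction the proof is incomplete. Two secondary points: enlarging $\CCC$ in step (i) changes $\SSS$ and hence the embedding the theorem is about; and in step (ii) the claim that every $x\in V_\lambda^{V[G]}$ has transitive closure enumerable in fewer than $\lambda$ steps fails unless $\lambda$ is (say) a $\beth$-fixed point --- the paper routes this requirement through Proposition~\ref{prop:sys_superstrong} instead.
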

\begin{proof}
	Let $u_\alpha: O_{a_\alpha} \to V$ for $\alpha < \gamma$ be a sequence of length $\gamma < \lambda$ in $V$ of elements of $\Ult(V, \SSS)$. Since $\CCC$ is ${<}\lambda$-directed, there is a $b \supseteq \bigcup\bp{ a_\alpha : {\alpha < \gamma}} \in \CCC$ such that $\vp{b} \geq \gamma$. Let $\ap{x_\alpha : ~ \alpha < \gamma}$ be a (partial) enumeration of $b$. Define
	\begin{align*}
		v: O_b &\to V \\
		f &\mapsto \bp{u_\alpha(f): ~ x_\alpha \in \dom(f)}
	\end{align*}
	so that by fineness and normality $[v]_\SSS = \bp{[u_\alpha]_\SSS: ~ \alpha < \gamma}$. Thus $\Ult(V, \SSS)$ is closed under ${<}\lambda$-sequences in $V$ and is $\lambda$-strong by Proposition \ref{prop:sys_superstrong}. We can apply Lemma \ref{lem:sequenzegeneriche} to obtain that $j_\SSS$ is ${<}\lambda$-closed.
\end{proof}

Note that since in the hypothesis of the previous theorem $j$ is ${<}\lambda$-closed, it is always possible to derive a system of filters with a ${<}\lambda$-directed set of domains. In particular, it is possible to derive towers of length $\lambda$ and $\lambda$-extenders of any length. Thus the only significant limitation is the hypothesis that $V[G]$ is ${<}\lambda^+$-cc where $\lambda$ is the amount of closure required for $M$. However, this hypothesis is satisfied in most classical examples of generic elementary embeddings with high degrees of closure, as e.g. the full stationary tower of length a Woodin cardinal.

It is also possible to ensure the same closure properties with non-directed system of filters, as e.g. extenders. This can be done by means of Theorem \ref{thm:e_dominates_s} and the following remarks.


\subsection{Consistency of small generic large cardinals} \label{ssec:consistency}

In this section we shall prove that in most cases the assertion that a small cardinal (e.g. $\omega_1$) has generically or ideally property $P$ consistently follows from the existence of any such cardinal (Corollary \ref{cor:small_glc}). Similar results were proved independently in \cite{kakuda:generic_precipitous, magidor:precipitous} and echoed by \cite{claverie:ideal_extenders}; we generalize them to $\CCC$-system of filters, obtaining a simpler proof.

In the following we shall need to lift embeddings and systems of filters in forcing extensions. We refer to \cite[Chp. 9]{cummings:iterated_embeddings} for a complete treatment of the topic. Recall that if $j : V \to M$ is an elementary embedding and $\BB \in V$ is a boolean algebra, $j$ is also an elementary embedding of the boolean valued model $V^\BB$ into $M^{j(\BB)}$. Furthermore, $j$ can be lifted to the generic extensions $V[G]$ and $M[H]$ where $G$ is $V$-generic for $\BB$ and $H$ is $j(\BB)$-generic for $M$ whenever $j[G] \subseteq H$.

For sake of simplicity, we shall focus on the boolean valued models approach and avoid explicit use of generic filters. This will be convenient to handle several different forcing notions at the same time. All the proofs will then be carried out in $V$ using names and explicitly mentioning in which boolean valued model $V^\BB$ every sentence is to be interpreted.

\begin{definition}
	Let $\BB$ be a complete boolean algebra, and $\dot{\CC}$ be a $\BB$-name for a complete boolean algebra. We denote by $\BB \ast \dot{\CC}$ the boolean algebra defined in $V$ whose elements are the equivalence classes of $\BB$-names for elements of $\dot{\CC}$ (i.e. $\dot{a}\in V^{\BB}$ such that $\Qp{\dot{a}\in\dot{\CC}}_{\BB}=\1$) modulo the equivalence relation $\dot{a} \approx \dot{b} ~ \Leftrightarrow ~ \Qp{\dot{a} = \dot{b}}_\BB = \1$.
\end{definition}

We refer to \cite{a:viale:notes_forcing, a:viale:semiproper_iterations} for further details on two-step iterations and iterated forcing.

\begin{definition}
	Let $\SS$ be a $\CCC$-system of filters, $\CC$ be a cBa. Then $\SS^\CC = \bp{F_a^\CC : ~ a \in \CCC}$ where $F_a^\CC = \bp{A \subseteq \cp{O_a}^{V^\CC} : ~ \exists B \in \check{F}_a ~ A \supseteq B}$.
\end{definition}

We remark that the following theorem is built on the previous results by Kakuda and Magidor \cite{kakuda:generic_precipitous, magidor:precipitous} for single ideals and by Claverie \cite{claverie:ideal_extenders} for ideal extenders.

\begin{theorem} \label{thm:coll_embedding}
	Let $j : V \to M \subseteq V^\BB$ be elementary with critical point $\kappa$, and $\CC \in V$ be a ${<}\kappa$-cc cBa. Then $\BB \ast j(\CC)$ factors into $\CC$, and the embedding $j$ lifts to $j^\CC : V^\CC \to M^{j(\CC)}$.
	\begin{center}
	\begin{tikzpicture}[xscale=2,yscale=1]
			\node (V) at (0, 1) {$V$};
			\node (M) at (1, 1) {$M$};
			\node (VB) at (2, 1) {$V^\BB$};
			\node (VC) at (0, 0) {$V^{\CC}$};
			\node (MC) at (1, 0) {$M^{j(\CC)}$};
			\node (VBC) at (2, 0) {$V^{\BB \ast j(\CC)}$};
			\path (V) edge [->] node [auto] {$j$} (M);
			\path (VC) edge [->] node [auto,swap] {$j^\CC$} (MC);
			\path (V) edge[draw=none] node [sloped] {$\subseteq$} (VC);
			\path (VB) edge[draw=none] node [sloped] {$\subseteq$} (VBC);
			\path (M) edge[draw=none] node [sloped] {$\subseteq$} (MC);
			\path (M) edge[draw=none] node [sloped] {$\subseteq$} (VB);
			\path (MC) edge[draw=none] node [sloped] {$\subseteq$} (VBC);
	\end{tikzpicture}
	\end{center}

	Furthermore, if $\BB = \SS = \ap{F_a : ~ a \in \CCC}$ is a $\ap{\kappa, \lambda}$-system of filters and $j = j_{\dot{\ff}(\SS)}$, then $\CC \ast \SS^\CC$ is isomorphic to $\SS \ast j(\CC)$ and $j^\CC$ is the embedding induced by $\SS^\CC$.
	\begin{center}
	\begin{tikzpicture}[xscale=2,yscale=1]
			\node (V) at (0, 1) {$V$};
			\node (M) at (1, 1) {$M$};
			\node (VB) at (2, 1) {$V^\SS$};
			\node (VC) at (0, 0) {$V^{\CC}$};
			\node (MC) at (1, 0) {$M^{j(\CC)}$};
			\node (VCB) at (2, 0) {$V^{\SS \ast j(\CC)}$};
			\node (VBC) at (3, 0) {$V^{\CC \ast \SS^\CC}$};
			\path (V) edge [->] node [auto] {$j_{\dot{\ff}(\SS)}$} (M);
			\path (VC) edge [->] node [auto,swap] {$j_{\dot{\ff}(\SS^\CC)}$} (MC);
			\path (V) edge[draw=none] node [sloped] {$\subseteq$} (VC);
			\path (VB) edge[draw=none] node [sloped] {$\subseteq$} (VCB);
			\path (VCB) edge[draw=none] node [sloped] {$=$} (VBC);
			\path (M) edge[draw=none] node [sloped] {$\subseteq$} (MC);
			\path (M) edge[draw=none] node [sloped] {$\subseteq$} (VB);
			\path (MC) edge[draw=none] node [sloped] {$\subseteq$} (VCB);
	\end{tikzpicture}
	\end{center}
\end{theorem}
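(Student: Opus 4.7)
I would define $i : \CC \to \BB \ast j(\CC)$ by $i(c) = j(c)$, regarding $j(c) \in M \subseteq V^\BB$ as a $\BB$-name of boolean value $\1$ for an element of $j(\CC)$. This is a morphism of boolean algebras because $j \res \CC$ is an elementary map into $j(\CC)^M$. To check $i$ is a \emph{complete} embedding it suffices to verify that maximal antichains are sent to predense sets: if $A \subseteq \CC$ is a maximal antichain in $V$ then $|A| < \kappa = \crit(j)$ by ${<}\kappa$-cc of $\CC$, hence $j(A) = j[A]$, and by elementarity $j[A]$ is a maximal antichain of $j(\CC)$ in $M$; since being a maximal antichain of a fixed cBa is absolute between transitive models, $j[A]$ remains maximal in $V^\BB$, so that $i[A]$ is predense in $\BB \ast j(\CC)$. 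The lift is $j^\CC(\dot{x}) = j(\dot{x})$ (since $\dot{x} \in V$ forces $j(\dot{x}) \in M$ to be a $j(\CC)$-name), and its elementarity follows from the implication $\Qp{\phi(\dot{x})}_\CC = b \implies \Qp{\phi(j(\dot{x}))}^M_{j(\CC)} = j(b)$ combined with the defining property of $i$, namely $b \in H \iff j(b) \in K$ for generic filters $H$ of $\CC$ and $K$ of $j(\CC)$ matched by $i$.

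\paragraph{Part 2.} Assume now $\BB = \SS$ and $j = j_{\dot{\ff}(\SS)}$. I would set up the isomorphism $\CC \ast \SS^\CC \cong \SS \ast j(\CC)$ at the level of generic filters. Given a $V$-generic $(G, K)$ for $\SS \ast j(\CC)$, put $H = j_G^{-1}[K]$ (which is $V$-generic for $\CC$ by Part 1), and let $G^\CC$ be the $\CCC$-system of $V[H]$-ultrafilters derived (in the sense of Definition \ref{def:derived_csu}) from the lifted embedding $j^\CC : V[H] \to M[K]$. By Proposition \ref{prop:induced_system} applied to $j_G$ together with $j^\CC \res V = j_G$, we obtain $G^\CC \cap V = G$, exhibiting $G^\CC$ as a $V[H]$-ultrafilter extension of $G$ within the system $\SS^\CC$. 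Conversely, from a $V$-generic $(H, G^\CC)$ for $\CC \ast \SS^\CC$ one reads off $G = G^\CC \cap V$ (a $V$-generic for $\SS$) and recovers $K$ as the filter on $j(\CC)^G$ generated by the image of $H$ under the lift $j^\CC : V[H] \to \Ult(V[H], G^\CC)$. Granted these assignments are mutual inverses, the induced bijection of dense subsets yields the desired isomorphism of complete boolean algebras. The identity $j^\CC = j_{\dot{\ff}(\SS^\CC)}$ then follows from Proposition \ref{prop:rappcanonici} applied inside $V[H]$: both embeddings agree on $V$ and on the canonical representatives for elements of $\bigcup \CCC$, which determine the embedding completely.

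\paragraph{Main obstacle.} The delicate step is verifying the $V[H]$-genericity of $G^\CC$ for $\SS^\CC$: since the $\SS^\CC$-positive sets in $V[H]$ strictly extend those of $\SS$ in $V$, the $V$-genericity of $G$ does not \emph{a priori} imply genericity of $G^\CC$ over $V[H]$. The resolution combines the ${<}\kappa$-cc of $\CC$ with the ${<}\kappa$-completeness of $\SS$ (Proposition \ref{prop:completeness}) to show that every $V[H]$-maximal antichain of $\SS^\CC$ is coded by a $V$-maximal antichain of the iteration $\SS \ast j(\CC)$, which is met by $(G, K)$. Equivalently, one may exhibit an explicit cBa isomorphism between the quotient $(\SS \ast j(\CC))/i[\CC]$ computed in $V^\CC$ and the forcing $\SS^\CC$, by representing each $\SS$-name for an element of $j(\CC)$ as a $\CC$-name for an $\SS^\CC$-positive subset of some $(O_a)^{V^\CC}$, and dually; this is the technical heart of the argument.
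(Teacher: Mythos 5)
Your Part 1 is correct and is essentially the paper's argument: the map $p \mapsto j(p)$ preserves $\leq$ and $\perp$ by elementarity, and the ${<}\kappa$-cc of $\CC$ gives $j(\AAA) = j[\AAA]$ for maximal antichains $\AAA$, so the embedding is complete and $j$ lifts.

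Part 2, however, has a genuine gap. You reduce the isomorphism $\CC \ast \SS^\CC \cong \SS \ast j(\CC)$ to the claim that the system $G^\CC$ derived from the lifted embedding $j^\CC : V[H] \to M[K]$ is $V[H]$-generic for $\SS^\CC$, you correctly identify this as the delicate point, and then you do not prove it: you only name the ingredients (${<}\kappa$-cc of $\CC$, ${<}\kappa$-completeness of $\SS$) and assert that "every $V[H]$-maximal antichain of $\SS^\CC$ is coded by a $V$-maximal antichain of the iteration $\SS \ast j(\CC)$." As stated this is circular: a maximal antichain of $\SS^\CC$ in $V[H]$ is naturally coded by a maximal antichain of $\CC \ast \SS^\CC$ in $V$, and transporting it to $\SS \ast j(\CC)$ is exactly the isomorphism you are trying to establish. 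The entire content of the "furthermore" clause lives in this step. The paper resolves it non-circularly by writing down an explicit map
\[
i_2 : \CC \ast \SS^\CC \longrightarrow \SS \ast j(\CC), \qquad \dot{A} \subseteq O_a \longmapsto \Qp{[\id_a]_{\dot{\ff}(\SS)} \in j(\dot{A})}_{\SS \ast j(\CC)},
\]
and verifying by hand that it preserves $\leq$ and $\perp$ (here the ${<}\kappa$-cc of $\CC$ enters, via maximal antichains of $\CC$ deciding which $C \in I_c$ covers $\pi_c^{-1}[\dot{A}] \setminus \pi_c^{-1}[\dot{B}]$) and that it has dense image (given $u^p : A \to \CC$ representing a condition of $\SS \ast j(\CC)$, its preimage is $\dot{B} = \bp{x \in \check{A} : \check{u}^p(x) \in \dot{G}_\CC}$). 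This computation is precisely the "technical heart" you deferred, and without it the proof is not complete.

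A secondary, smaller gap: your final step identifies $j^\CC$ with $j_{\dot{\ff}(\SS^\CC)}$ by appealing to Proposition \ref{prop:rappcanonici}, i.e.\ to agreement on $j \res V$ and on the points $(j \res a)^{-1}$. That only shows the two embeddings have the same derived system; to conclude they are equal you must also show that the factor map $k : \Ult(V[H], G^\CC) \to M[K]$ is surjective, i.e.\ that every element of $M[K]$ is of the form $j^\CC(u)\cp{(j^\CC \res a)^{-1}}$ for some $u \in V[H]$. The paper handles this by showing that elements of both models are represented by functions $u : O_a \to V^\CC$ lying in $V$ and that the two models interpret such functions identically (another place where ${<}\kappa$-cc together with ${<}\kappa$-completeness of $I_a$ is used, to replace a $\CC$-name for a set in $I_a^\CC$ by a single $B \in I_a$). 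This needs to be said, even if briefly.
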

\begin{proof}
	For the first part, consider the embedding:
	\[
	\begin{array}{rrcl}
		i_1: & \CC & \longrightarrow  & \BB \ast j(\CC) \\
		   & p & \longmapsto & j(p)
	\end{array}
	\]
	By elementarity of $j$, $i_1$ must preserve $\leq$, $\perp$. Given any maximal antichain $\AAA$, $\CC$ is ${<}\kappa$-cc hence $j[\AAA] = j(\AAA)$ which is maximal again by elementarity of $j$. Then $i_1$ is a complete embedding hence $\BB \ast j(\CC)$ is a forcing extension of $\CC$. Thus we can lift $j$ to a generic elementary embedding $j^\CC$.

	For the second part, consider the embedding:
	\[
	\begin{array}{rrcl}
		i_2: & \CC \ast \SS^\CC & \longrightarrow  & \SS \ast j(\CC) \\
		   & \dot{A} \subseteq O_a &\longmapsto & \Qp{[\id_a]_{\dot{\ff}(\SS)} \in j(\dot{A})}_{\SS \ast j(\CC)}
	\end{array}
	\]
	This map is well-defined since the set of $\dot{A} \in \CC \ast \SS^\CC$ such that $\dot{A} \subseteq O_a$ for some fixed $a \in \CCC$ is dense in $\CC \ast \SS^\CC$. Suppose now that $\dot{A} \leq_{\CC \ast \SS^\CC} \dot{B}$ with $\dot{B} \subseteq O_b$, $b \in \CCC$, $c = a \cup b$. Then,
	\begin{align*}
		\1 &\Vdash_\CC \cp{\pi^{-1}_c[\dot{A}] \setminus \pi^{-1}_c[\dot{B}]} \in I_c^\CC \quad\Rightarrow \\
		\1 &\Vdash_\CC \exists C \in I_c ~ \cp{\pi^{-1}_c[\dot{A}] \setminus \pi^{-1}_c[\dot{B}]} \subseteq C
	\end{align*}
	and we can find a maximal antichain $\AAA \subseteq \CC$ such that $p \Vdash_\CC \cp{\pi^{-1}_c[\dot{A}] \setminus \pi^{-1}_c[\dot{B}]} \subseteq \check{C}_p$ for every $p \in \AAA$ and corresponding $C_p \in I_c \Rightarrow \1 \Vdash_\CC [\id_c]_{\dot{\ff}(\SS)} \notin j(C_p)$. Thus by elementarity of $j$, for all $p \in \AAA$ we have that
	\[
		j(p) \Vdash_{j(\CC)} \cp{\pi^{-1}_c[j(\dot{A})] \setminus \pi^{-1}_c[j(\dot{B})]} \subseteq j(\check{C}_p) \not\ni [\id_c]_{\dot{\ff}(\SS)} 
	\]
	and since $j[\AAA]$ is maximal in $j(\CC)$,
	\begin{align*}
		& \1 \Vdash_{j(\CC)} [\id_{a \cup b}]_{\dot{\ff}(\SS)}  \notin \cp{\pi^{-1}_c[j(\dot{A})] \setminus \pi^{-1}_c[j(\dot{B})]} \quad\Rightarrow \\
		& \1 \Vdash_{j(\CC)} [\id_b]_{\dot{\ff}(\SS)}  \in j(\dot{B}) \vee [\id_a]_{\dot{\ff}(\SS)}  \notin j(\dot{A}) \quad\Rightarrow \\
		& i_2(\dot{B}) \vee \neg i_2(\dot{A}) = \1 \Rightarrow i_2(\dot{A}) \leq i_2(\dot{B})
	\end{align*}
	Thus $i_2$ preserves $\leq$. Preservation of $\perp$ is easily verified by a similar argument, replacing everywhere $\dot{A} \setminus \dot{B}$ with $\dot{A} \cap \dot{B}$.

	We still need to prove that $i_2$ has a dense image. Fix $[u^p]_{\dot{\ff}(\SS)} \in \SS \ast j(\CC)$, so that $u^p : A \to \CC$, $A \in I_a^+$, $a \in \CCC$. Let $\dot{B} = \bp{x \in \check{A} : ~ \check{u}^p(x) \in \dot{G}_\CC}$ be in $V^\CC$. Then,
	\begin{align*}
		i_2(\dot{B}) &= \Qp{[\id_a]_{\dot{\ff}(\SS)}  \in j(\check{A}) ~\wedge~ j(\check{u}^p)([\id_a]_{\dot{\ff}(\SS)} ) \in j(\dot{G}_\CC)}_{\SS \ast j(\CC)} \\
		&= \Qp{\check{A} \in \dot{\ff}(\SS) ~\wedge~ [\check{u}^p]_{\dot{\ff}(\SS)} \in \dot{G}_{j(\CC)}}_{\SS \ast j(\CC)} = [u^p]_{\dot{\ff}(\SS)}
	\end{align*}
	hence $V^{\SS \ast j(\CC)} = V^{\CC \ast \SS^\CC}$ is the forcing extension of $V^\CC$ by $\SS^\CC$.

	Finally, we prove that $j^\CC$ is the generic ultrapower embedding derived from $\SS^\CC$. We can directly verify that $\SS^\CC$ satisfies filter property, fineness and compatibility. This is sufficient to define an ultrapower $N = \Ult(V^\CC, \dot{\ff}(\SS^\CC))$ and prove \L o\'s Theorem for it. The elements of $N$ are represented by $\CC$-names for functions $\dot{v}: O_a^{V^\CC} \to V^\CC$. Since $F_a^\CC$ concentrates on $O_a^V$ for all $a \in \CCC$, we can assume that $\dot{v}: \check{O}_a \to V^\CC$. Furthermore, we can replace $\dot{v}$ by a function $u: O_a \to V^\CC$ in $V$ mapping $f \in O_a$ to a name for $\dot{v}(\check{f})$. These functions can then represent both all elements of $N$ and all elements of $M^{j(\CC)}$. Furthermore, $N$ and $M^{j(\CC)}$ must give the same interpretation to them. In fact, given $u_n : O_a \to V^\CC$ for $n = 1, 2$ and $\dot{A} \in \cp{I_a^\CC}^+$:
	\begin{align*}
		& \dot{A} \Vdash_{\CC \ast \SS^\CC} [u_1]_{\dot{\ff}(\SS^\CC)} = [u_2]_{\dot{\ff}(\SS^\CC)} \iff \\
		& \1 \Vdash_\CC \bp{f \in \dot{A} : ~ u_1(f) \neq u_2(f)} \in I_a^\CC \iff \\
		& \exists B \in I_a ~ \1 \Vdash_\CC \forall f \in \dot{A} \setminus \check{B} ~ \dot{v}_1(f) = \dot{v}_2(f) \iff \\
		& \exists B \in F_a ~ \1 \Vdash_\CC \forall f \in \dot{A} \cap \check{B} ~ \dot{v}_1(f) = \dot{v}_2(f) \iff \\
		& \exists B \in F_a ~ \forall f \in B ~ \1 \Vdash_\CC f \in \dot{A} \rightarrow u_1(f) = u_2(f) \iff \\
		& \bp{f \in O_a : ~ \1 \Vdash_\CC f \in \dot{A} \rightarrow u_1(f) = u_2(f)} \in F_a \iff \\
		& \1 \Vdash_{\SS \ast j(\CC)} [\id_a]_{\dot{\ff}(\SS)} \in j(\dot{A}) \rightarrow [u_1]_{\dot{\ff}(\SS)} = [u_2]_{\dot{\ff}(\SS)} \iff \\
		& i_2(\dot{A}) = \Qp{[\id_a]_{\dot{\ff}(\SS)} \in j(\dot{A})}_{\SS \ast j(\CC)} \Vdash_{\SS \ast j(\CC)} [u_1]_{\dot{\ff}(\SS)} = [u_2]_{\dot{\ff}(\SS)}
	\end{align*}
	and the above reasoning works also replacing $=$ with $\in$. The second passage uses essentially that $\CC$ is ${<}\kappa$-cc and $\SS$ is a $\ap{\kappa, \lambda}$-system of filters. In fact, in this setting given $\dot{A} \in I_a^\CC$ there are less than $\kappa$ possibilities for a $B \in I_a$, $p \Vdash \check{B} \supseteq (\dot{A} \cap \check{O}_a)$, hence we can find a single such $B$ by ${<}\kappa$-completeness of $I_a$ (see Proposition \ref{prop:completeness}).
\end{proof}

\begin{corollary}
	Let $\SS$ be a $\ap{\kappa, \lambda}$-system of filters, $\CC$ be a ${<}\kappa$-cc cBa. Then $\SS^\CC$ is a $\CCC$-system of filters.
\end{corollary}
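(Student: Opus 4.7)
My plan is to verify directly the four axioms of a $\CCC$-system of filters for $\SS^\CC = \bp{F_a^\CC : a \in \CCC}$ inside $V^\CC$. The cornerstone of the argument is the alternative characterization
\[
F_a^\CC = \bp{A \in \PPP^{V^\CC}(O_a^{V^\CC}) : A \cap O_a^V \in F_a},
\]
dually $I_a^\CC = \bp{A \in \PPP^{V^\CC}(O_a^{V^\CC}) : A \cap O_a^V \in I_a}$, which is immediate from the defining closure under supersets: if $A \supseteq B$ with $B \in F_a$ then $A \cap O_a^V \supseteq B \in F_a$, and conversely $B := A \cap O_a^V$ already witnesses $A \in F_a^\CC$. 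Using this characterization, the filter property and fineness transfer routinely from the corresponding properties of $\SS$; compatibility follows from the simple identity $\pi_{ba}^{-1}[A] \cap O_b^V = \pi_{ba}^{-1}[A \cap O_a^V]$ (every $f \in O_b^V$ automatically satisfies $f \res a \in O_a^V$) combined with compatibility of $\SS$.

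The hard part is normality: a regressive function $u : A \to V^\CC$ lives in $V^\CC$ and need not be a $V$-function, so the normality of $\SS$ cannot be applied directly. The approach will be to invoke the first part of Theorem~\ref{thm:coll_embedding}, whose proof requires only the ${<}\kappa$-cc of $\CC$ and not the corollary itself, in order to lift $j = j_{\dot{\ff}(\SS)}$ to an elementary embedding $j^\CC : V^\CC \to M^{j(\CC)}$ definable in $V^{\SS \ast j(\CC)}$. Since $(j^\CC \res a)^{-1} = (j \res a)^{-1} \in M \subseteq M^{j(\CC)}$ for every $a \in \CCC$, Proposition~\ref{prop: derived_system} can be applied inside $V^\CC$ to produce the $\CCC$-system of $V^\CC$-ultrafilters $\dot{\SSS}'$ derived from $j^\CC$, which satisfies all four axioms, including normality, in the further extension.

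A direct unpacking via Proposition~\ref{prop:induced_system} and the defining formula $A \in F'_a \iff (j^\CC \res a)^{-1} \in j^\CC(A)$ will then identify $\dot{\SSS}'$ with $\dot{\ff}(\SS^\CC)$. Normality of $\SS^\CC$ in $V^\CC$ is thus deduced by a density and forcing-theorem argument analogous to the $\ii(\dot{\SSS})$ half of the proof of Proposition~\ref{prop:ideal_generic}, pulling back a generic witness for normality (living in $V^{\SS \ast j(\CC)}$) to a concrete $b \in \CCC$, $B \in I_b^{\CC+}$ and $y \in b$ in the ground model $V^\CC$. The main obstacle is precisely this last extraction, and it is here that the ${<}\kappa$-completeness of $\SS$ (Proposition~\ref{prop:completeness}) together with the ${<}\kappa$-cc of $\CC$ become essential, exactly as in the final chain of equivalences in the proof of Theorem~\ref{thm:coll_embedding}, to replace $V^\CC$-ideal data by $V$-ideal data of size bounded below $\kappa$.
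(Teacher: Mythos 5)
Your argument is correct and, for the only nontrivial axiom (normality), follows exactly the paper's route: the paper's proof is the one-line observation that $\SS^\CC$ is the system derived from the lifted embedding $j_{\dot{\ff}(\SS)}^\CC$ of Theorem~\ref{thm:coll_embedding}, so that Propositions~\ref{prop: derived_system} and~\ref{prop:ideal_generic} apply; your direct verification of the filter property, fineness and compatibility is additional but harmless. One small repair: since $F_a$ is a filter on the boolean algebra $\PPP^V(O_a)$ and $A \cap O_a^V$ need not belong to $V$ when $A \in V^\CC$, your characterization should read ``$A \in F_a^\CC$ iff $A \cap O_a^V$ \emph{contains} some $B \in F_a$'' (dually, $A \in I_a^\CC$ iff $A \cap O_a^V$ is contained in some $C \in I_a$); all of your subsequent uses go through verbatim with this corrected formulation.
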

\begin{proof}
	Since $\SS^\CC$ is the $\CCC_\SS$-system of filters in $V$ derived from $j_{\dot{\ff}(\SS)}^\CC$, it is a $\CCC$-system of filters by Propositions \ref{prop: derived_system} and \ref{prop:ideal_generic}.
\end{proof}

\begin{proposition} \label{prop:coll_closure_strength}
	Let $j : V \to M \subseteq V^\BB$ be elementary with critical point $\kappa$, $\gamma < \kappa$ be a cardinal, and $j^\CC : V^\CC \to M^{j(\CC)}$ be obtained from $j$ and $\CC = \Coll(\gamma, {<}\kappa)$. Suppose that $j(\kappa)$ is regular in $V^\BB$.

	If $j$ is ${<}\delta$-closed with $\delta \geq j(\kappa)$, then $j^\CC$ is ${<}\delta$-closed. If $j$ is $\delta$-strong with $\delta \geq j(\kappa)$, then $j^\CC$ is $\delta$-strong.
\end{proposition}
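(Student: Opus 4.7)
The plan is to exploit the factorization from Theorem~\ref{thm:coll_embedding}. Let $K$ be the $j(\CC)$-generic over $V^\BB$ induced by the generic for $\BB\ast j(\CC)$, so $V^{\BB\ast j(\CC)}=V^\BB[K]$, $M^{j(\CC)}=M[K]$, and $j^\CC$ is $j$ extended by $K$. In both items I reduce the property of $j^\CC$ inside $V^\BB[K]$ to the corresponding property of $j$ inside $V^\BB$, absorbing the forcing $j(\CC)\in M$, whose cardinality in $V^\BB$ is $j(\kappa)\leq\delta$.

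For the strongness clause, given $x\in V_\delta^{V^\BB[K]}$ a standard nice-name construction yields a $j(\CC)$-name $\dot x\in V^\BB$ for $x$ of rank below $\delta$ (since $|j(\CC)|\leq\delta$, modulo a harmless $+\omega$ adjustment). The $\delta$-strongness of $j$ then puts $\dot x\in V_\delta^{V^\BB}\subseteq M$, hence $x=\val_K(\dot x)\in M[K]=M^{j(\CC)}$.

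For the closure clause, given a sequence $\vec x=\langle x_\alpha:\alpha<\eta\rangle\in V^\BB[K]$ with $\eta<\delta$ and every $x_\alpha\in M[K]$, fix a $j(\CC)$-name $\dot{\vec x}\in V^\BB$. The key preliminary is that $j(\CC)$ is $j(\kappa)$-cc in $V^\BB$: $\CC$ is $\kappa$-cc in $V$ (built into the applicability of Theorem~\ref{thm:coll_embedding}), so by elementarity $j(\CC)$ is $j(\kappa)$-cc in $M$; and any antichain of $j(\CC)$ in $V^\BB$, being a sequence into $M$ of length bounded by $j(\kappa)\leq\delta$, is pulled back into $M$ by $<\delta$-closure and therefore has size $<j(\kappa)$. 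Using this chain condition I pick in $V^\BB$, for every $\alpha<\eta$, a maximal antichain $A_\alpha\subseteq j(\CC)$ of size $<j(\kappa)$ deciding $\dot{\vec x}(\check\alpha)$, together with witnesses $\dot y_{p,\alpha}\in M$ for $p\in A_\alpha$ satisfying $p\Vdash\dot{\vec x}(\check\alpha)=\dot y_{p,\alpha}$. Each map $p\mapsto\dot y_{p,\alpha}$ is a $<j(\kappa)$-sequence into $M$ living in $V^\BB$, hence in $M$ by $<\delta$-closure; the outer $\alpha$-indexed sequence of pairs $(A_\alpha,\,p\mapsto\dot y_{p,\alpha})$ is a $<\delta$-sequence into $M$ from $V^\BB$, again in $M$ by $<\delta$-closure. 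Reassembling this data into a single $j(\CC)$-name $\dot{\vec x}'\in M$ with $\val_K(\dot{\vec x}')=\vec x$ concludes $\vec x\in M[K]=M^{j(\CC)}$.

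The main obstacle is establishing the $j(\kappa)$-chain condition of $j(\CC)$ inside $V^\BB$: the absorption into $M$ of an antichain requires just enough closure, and the boundary case $\delta=j(\kappa)$ is particularly tight, since a putative bad antichain may have length exactly $j(\kappa)$ and is not strictly below $\delta$; it is here that the hypothesis that $j(\kappa)$ is regular in $V^\BB$ is used to rule out pathologies. Once this chain condition is in place, the nice-name and reassembly arguments reduce to the standard absorption of a small forcing by a sufficiently closed or strong embedding.
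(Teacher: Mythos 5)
Your overall strategy --- factor through Theorem~\ref{thm:coll_embedding}, establish that $j(\CC)$ is ${<}j(\kappa)$-cc in $V^\BB$, and then absorb $j(\CC)$-names into $M$ using closure or strongness --- is the same as the paper's, and your reassembly of names in the closure clause is correct modulo the chain condition. The genuine gap is exactly where you flag it, and it is not repaired by remarking that regularity ``rules out pathologies.'' Your derivation of the chain condition (elementarity gives the $j(\kappa)$-cc in $M$; closure pulls antichains of $V^\BB$ back into $M$) is circular precisely in the boundary case $\delta=j(\kappa)$: to pull an antichain into $M$ you need it to have size ${<}\delta=j(\kappa)$, which is what you are trying to prove, and the fact that all proper initial segments are small antichains lying in $M$ yields no contradiction with the $j(\kappa)$-cc of $j(\CC)$ in $M$. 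Since $\delta=j(\kappa)$ is the case the paper actually needs (superstrongness and almost hugeness in Corollary~\ref{cor:small_glc} and Section~\ref{ssec:distinction_glcp}), this cannot be waved away. The paper's route avoids closure entirely: the ordering of the L\'evy collapse is absolute between transitive models, so $j(\CC)=\Coll(\gamma,{<}j(\kappa))^M$ is a suborder of $\Coll(\gamma,{<}j(\kappa))^{V^\BB}$, and the latter is ${<}j(\kappa)$-cc \emph{computed in $V^\BB$} by the direct combinatorial argument that uses only the regularity of $j(\kappa)$ in $V^\BB$. You should replace your closure-based derivation of the chain condition by this one.

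A second, related gap is in the strongness clause: the claim that a nice name $\dot x$ has rank below $\delta$ ``since $|j(\CC)|\le\delta$'' confuses cardinality with rank. Conditions of $j(\CC)$ have rank roughly $j(\kappa)$, so when $\delta=j(\kappa)$ a nice name built from antichains of $j(\CC)$ has rank at least $\delta$, and $\delta$-strongness (i.e.\ $V_\delta^{V^\BB}\subseteq M$) does not place it in $M$; the ``$+\omega$ adjustment'' only helps when $\delta>j(\kappa)$. Here too the chain condition is needed, together with a further coding step: each bit of $x$ is decided by an antichain of size ${<}j(\kappa)$, so after replacing conditions of $j(\CC)$ by ordinals below $j(\kappa)$ via a bijection in $M$, the name is coded by a subset of $\mu\times j(\kappa)$ of size ${<}\delta$, which is \emph{bounded} below $j(\kappa)$ in the second coordinate because $j(\kappa)$ is regular in $V^\BB$, hence lies in $V_\eta^{V^\BB}$ for some $\eta<\delta$ and is caught by $\delta$-strongness. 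This is the content of the paper's phrase that the name ``is coded by a subset of $M$ of size less than $\delta$''; without it the strongness argument does not close in the boundary case.
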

\begin{proof}
	Since $j(\kappa)$ is regular in $V^\BB$, $\Coll(\gamma, {<}j(\kappa))$ is ${<}j(\kappa)$-cc in $V^\BB$. Moreover, the order on the L\'evy collapse is absolute between transitive models thus $j(\CC) = \Coll(\gamma, {<}j(\kappa))^M$ is a suborder of $\Coll(\gamma, {<}j(\kappa))$. Hence $j(\CC)$ is also ${<}j(\kappa)$-cc in $V^\BB$.

	First, suppose that $j$ is ${<}\delta$-closed and let $\sigma$ be a $j(\CC)$-name for a sequence of ordinals of size $\mu < \delta$. Since $\sigma(i)$ for $i < \mu$ is decided by an antichain of size less than $j(\kappa)$, the whole $\sigma$ is coded by a subset of $M$ of size less than $\delta + j(\kappa) = \delta$. Thus $\sigma \in M$ hence is evaluation is in $M^{j(\CC)}$.
	
	Suppose now that $j$ is $\delta$-strong and let $\sigma$ be a $j(\CC)$-name for a subset of $\mu < \delta$. Then $\sigma$ is coded by a subset of $M$ of size less than $\delta + j(\kappa) = \delta$ as before, hence $\sigma$ is in $M$ and its evaluation in $M^{j(\CC)}$.
\end{proof}

\begin{corollary} \label{cor:small_glc}
	Let $P$ be a property among $(n)$-huge, almost $(n)$-huge (for $n > 0$), $\alpha$-superstrong (for $\alpha > \kappa$), $(n)$-superstrong (for $n > 1$).

	If $\kappa$ is generically (resp. ideally) $P$, then it is so after $\Coll(\gamma, {<}\kappa)$ for any $\gamma < \kappa$. Thus the existence of a generically (resp. ideally) $P$ cardinal is equiconsistent with $\omega_1$ being such a cardinal.
\end{corollary}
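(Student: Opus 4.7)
The plan is to combine Theorem \ref{thm:coll_embedding} with Proposition \ref{prop:coll_closure_strength}, applied to the L\'evy collapse $\CC = \Coll(\gamma,{<}\kappa)$, in order to lift a witnessing embedding into the generic extension $V^\CC$ while preserving all the relevant closure and strength; the equiconsistency claim will then follow by specialising $\gamma = \omega$.

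I would fix $\gamma < \kappa$ and note that $\CC = \Coll(\gamma,{<}\kappa)$ is ${<}\kappa$-cc in $V$. Starting from a witness $j : V \to M \subseteq V^\BB$ of property $P$ at $\kappa$, with the additional stipulation $\BB = \SS$ and $j = j_{\dot{\ff}(\SS)}$ in the \emph{ideally} case, Theorem \ref{thm:coll_embedding} produces a lift $j^\CC : V^\CC \to M^{j(\CC)}$ definable in a forcing extension of $V^\CC$, whose critical point is again $\kappa$, but in $V^\CC$ this is $(\gamma^+)^{V^\CC}$. In the ideally case the second half of Theorem \ref{thm:coll_embedding} moreover identifies $j^\CC$ with the ultrapower embedding induced by the $\CCC$-system of filters $\SS^\CC \in V^\CC$, so that $\kappa$ remains ideally $P$ provided $P$ survives the lift.

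I would then check survival of $P$ by applying Proposition \ref{prop:coll_closure_strength} for each item in the list. Every property mentioned is built up from ${<}\delta$-closure and/or $\delta$-strength hypotheses, with $\delta$ of the form $j^n(\kappa)$ (for the huge-type and $n$-superstrong variants) or $\alpha$ (for $\alpha$-superstrongness). The arithmetic restrictions $n > 0$, $n > 1$, and $\alpha > \kappa$ are tailored precisely so as to guarantee $\delta \geq j(\kappa)$, which is the first hypothesis of Proposition \ref{prop:coll_closure_strength}. Its remaining hypothesis, regularity of $j(\kappa)$ in $V^\BB$, is then automatic in each case: the closure or strength available to $j$ forces any candidate cofinal sequence in $j(\kappa)$ of length ${<}j(\kappa)$ to lie in $M$, contradicting the fact that $j(\kappa)$ is inaccessible in $M$. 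Proposition \ref{prop:coll_closure_strength} then yields that $j^\CC$ preserves the relevant closure and strength, and hence witnesses $P$ at $\kappa$ inside $V^\CC$.

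The equiconsistency statement follows by choosing $\gamma = \omega$, so that $\kappa$ becomes $\omega_1$ in $V^\CC$, yielding a model in which $\omega_1$ is generically (resp.\ ideally) $P$; the reverse direction is trivial. The main obstacle is the case-by-case verification that $\delta \geq j(\kappa)$ and that $j(\kappa)$ is regular in $V^\BB$ for every $P$ in the list; this is also the reason for excluding $1$-superstrongness from the statement, since in that case only $V_{j(\kappa)} \subseteq M$ is available and a cofinal sequence of length ${<}j(\kappa)$ need not be coded inside $M$.
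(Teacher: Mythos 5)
Your proposal is correct and follows exactly the route the paper intends: the corollary is stated without proof precisely because it is the direct combination of Theorem \ref{thm:coll_embedding} (lifting $j$, and in the ideally case identifying $j^\CC$ with the ultrapower by $\SS^\CC$) with Proposition \ref{prop:coll_closure_strength}. Your added verification that $j(\kappa)$ is regular in $V^\BB$ for each listed property, and your explanation of why $1$-superstrongness must be excluded, match the paper's own remark immediately following the corollary.
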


Note that the previous corollary applies only to generically and ideally $P$: the existence of a large cardinal with property $P$ in $V$ is usually stronger than $\omega_1$ being generically $P$. Due to the fact that a generically superstrong cardinal does not guarantee that $j(\kappa)$ is regular in $V^\BB$, the previous result does not apply to superstrong cardinals. We recall that the case of a strong cardinal was already treated in \cite[Corollary 4.14]{claverie:ideal_extenders}, which showed the following.

\begin{theorem}
	The existence of a strong cardinal is equiconsistent with $\omega_1$ being ideally strong.
\end{theorem}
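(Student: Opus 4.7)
The plan is to prove the two directions of the equiconsistency separately, with the forward direction (a strong cardinal yields $\omega_1$ ideally strong after collapse) proceeding as a direct adaptation of Corollary \ref{cor:small_glc}, while the reverse direction (the consistency lower bound) requires standard inner model theory outside the scope of the present framework.

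For the forward direction, I would proceed as follows. Let $\kappa$ be strong in $V$, and set $\CC = \Coll(\omega, {<}\kappa)$, so that $\kappa = \omega_1^{V^\CC}$ and $\CC$ is ${<}\kappa$-cc. Interpreting ``strong'' as ``$\gamma$-strong for every $\gamma$'', it suffices to show that for each $\gamma \geq \kappa$ there is a $\CCC$-system of filters in $V^\CC$ whose generic ultrapower is $\gamma$-strong. Fix such a $\gamma$. In $V$, I would select a classical embedding $j: V \to M$ with $V_\gamma^V \subseteq M$ and $\crit(j) = \kappa$, then let $\EEE \in V$ be the $[\gamma]^{{<}\omega}$-extender derived from $j$ via Definition \ref{def:derived_csu}. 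A routine check yields that $j_\EEE$ is $\gamma$-strong and $j_\EEE(\kappa) \leq \gamma$. Applying Theorem \ref{thm:coll_embedding} with trivial $\BB$, $\EEE^\CC$ is a $\CCC$-system of filters in $V^\CC$ whose generic ultrapower embedding $j_\EEE^\CC$ lifts $j_\EEE$ and has critical point $\kappa = \omega_1^{V^\CC}$. Since $j_\EEE \in V$, regularity of $j_\EEE(\kappa)$ is witnessed in $V$ itself (as $\kappa$ measurable makes $j_\EEE(\kappa)$ inaccessible in the ultrapower, hence regular in $V$), so Proposition \ref{prop:coll_closure_strength} applies with $\delta = \gamma \geq j_\EEE(\kappa)$ and shows that $j_\EEE^\CC$ is $\gamma$-strong in the generic extension. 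Varying $\gamma$, this witnesses ideal strongness of $\omega_1^{V^\CC}$.

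For the reverse direction, which is the consistency lower bound, one extracts from $\omega_1$ being ideally strong a transitive inner model of $\ZFC$ containing a strong cardinal. The standard approach invokes Mitchell-style core model theory: the generic ultrapower embeddings of arbitrarily high strength reflect to $K^V$, yielding a strong cardinal there. I would appeal directly to the treatment in Claverie \cite{claverie:ideal_extenders}.

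The main obstacle I expect is the hypothesis $\delta \geq j(\kappa)$ in Proposition \ref{prop:coll_closure_strength}, together with the regularity of $j(\kappa)$. The remark following Corollary \ref{cor:small_glc} warned that generically superstrong fails to guarantee regularity of $j(\kappa)$; here this is sidestepped precisely because $\kappa$ is strong in $V$ (not merely generically so), putting the embedding in $V$ and making regularity automatic. The inequality $\delta \geq j(\kappa)$ is secured by the observation that extenders of length $\gamma$ derived from $\gamma$-strong embeddings automatically satisfy $j_\EEE(\kappa) \leq \gamma$, so no additional work is needed.
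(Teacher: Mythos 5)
First, a remark on provenance: the paper does not prove this statement at all --- it is quoted from \cite[Corollary 4.14]{claverie:ideal_extenders}, and the only hint offered is that the preservation direction can be carried out ``as in Proposition \ref{prop:coll_closure_strength}''. Your skeleton (derive an extender in $V$, transfer it through $\Coll(\omega,{<}\kappa)$ via Theorem \ref{thm:coll_embedding}, preserve strength via Proposition \ref{prop:coll_closure_strength}, delegate the consistency lower bound to core model theory) is therefore the intended route, and deferring the reverse direction is consistent with what the paper itself does.

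There is, however, a genuine gap in the forward direction, located exactly where you declare the obstacle ``sidestepped''. Proposition \ref{prop:coll_closure_strength} requires $j(\kappa)$ to be regular in $V^\BB$ (here: in $V$) and $\delta \geq j(\kappa)$, and both of your justifications fail. Inaccessibility of $j_\EEE(\kappa)$ in $\Ult(V,\EEE)$ does not transfer to $V$, because an extender ultrapower is not sufficiently closed: the counting argument behind Lemma \ref{lem:max_push} gives $j_\EEE(\kappa) < \cp{(\vp{\lambda}\cdot 2^\kappa)^+}^V$ for an extender of length $\lambda$ with supports $\leq\kappa$, while also $j_\EEE(\kappa) \geq \lambda$; so for the lengths needed to capture $V_\gamma$ the ordinal $j_\EEE(\kappa)$ is typically not even a cardinal of $V$, let alone regular. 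Likewise $j_\EEE(\kappa) \leq \gamma$ fails whenever $j(\kappa) > \gamma$, which is the normal situation for a strongness embedding (and arranging $j(\kappa) < \gamma$ would require genuinely inaccessible, indeed measurable, cardinals of $V$ above $\kappa$, which a mere strong cardinal does not provide). The hypothesis is not cosmetic: since $j(\CC) = \Coll(\omega, {<}j_\EEE(\kappa))^M$ consists of finite conditions it coincides with $\Coll(\omega,{<}j_\EEE(\kappa))$ computed in $V$, and if $j_\EEE(\kappa)$ is not regular in $V$ this forcing makes $j_\EEE(\kappa)$ countable in $V^{j(\CC)}$ while $M^{j(\CC)}$ still believes it is $\omega_1$; the resulting surjection of $\omega$ onto $j_\EEE(\kappa)$ lies in $\cp{V_{j_\EEE(\kappa)+2}}^{V^{j(\CC)}} \setminus M^{j(\CC)}$, so the lifted embedding is not $\delta$-strong in the sense of Definition \ref{definition:glce} for any $\delta > j_\EEE(\kappa)+1$. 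Closing the gap requires either an argument that for every target $\delta$ the witnessing embedding can be chosen with $j(\kappa)$ an honest regular cardinal of $V$ below the strength achieved (not automatic), or weakening ``$\gamma$-strong'' to the capture of $V_\gamma^{V^\CC}$ (equivalently of ground-model-coded sets, which is in effect Claverie's formulation); in the latter case the transfer is immediate since $M \subseteq M^{j(\CC)}$ and the $\CC$-generic belongs to $M^{j(\CC)}$. As written, your proposal does not close this gap.
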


As in Proposition \ref{prop:coll_closure_strength}, it is possible to prove that forcing with $\Coll(\gamma, {<}\kappa)$ with $\kappa$ a strong cardinal preserves the ideally strongness of $\kappa$. However, starting with an ideally strong cardinal would not suffice in this case. In order to get a $j^\CC$ with strength $\gamma$ we need an embedding $j : V \to M \subseteq V^\BB$ with enough strength so as to contain in $M$ a name for $V_\gamma^\BB$. Although, since the complexity of such a name depends on $\BB$, and $\BB$ depends on the amount of strength that we wish to achieve, there is no hope to sort out this circular reference. On the other hand, a generically strong cardinal is preserved under \emph{Cohen} forcing under some assumptions \cite{cody:cox:generically_strong}.

Notice that the previous corollary does not apply also to generically supercompact cardinals. However, this is not surprising since $\kappa = \gamma^+$ being generically supercompact is equivalent to being generically almost huge: in fact, if $j: V \to M \subseteq V[G]$ is a $\gamma$-closed embedding obtained by $\gamma$-supercompactness, it is also almost huge since $j(\kappa) = (\gamma^+)^{V[G]}$. Thus such a preservation theorem for supercompactness would in turn imply the equiconsistency of generically supercompactness and generically almost hugeness, which is not expected to hold. However, if we restrict the class of forcing to \emph{proper} forcings, is possible to obtain a similar preservation theorem \cite{foreman:quotient_embeddings}.


\subsection{Combinatorial equivalents of ideally large cardinal properties} \label{ssec:combinatoric}

The \emph{ideal} properties of cardinals given in Definition \ref{def:ideally_lc} are inherently properties of a $\CCC$-system of filters, it is therefore interesting to reformulate them in purely combinatorial terms. In this section we review the main results on this topic present in literature, adapted to the paradigm introduced in Section \ref{sec:system_filters}; and we integrate them with a characterization of strongness that, to our knowledge, is not yet present in literature.

\subsubsection{Critical point and tallness}

In order to express any large cardinal property, we need to be able to identify the critical point of an embedding $j_{\dot{\ff}(\SS)}$ derived from some $\CCC$-system of filters $\SS$.

\begin{definition}
	Let $\SS$ be a $\CCC$-system of filters in $V$. The \emph{completeness} of $\SS$ is the minimum of the completeness of $F_a$ for $a \in \CCC$, i.e. the unique cardinal $\kappa$ such that every $F_a$ is ${<}\kappa$-complete and there is an $F_a$ that is not ${<}\kappa^+$-complete.

	We say that $\SS$ has \emph{densely} completeness $\kappa$ iff it has completeness $\kappa$ and there are densely many $B \in \SS^+$ disproving ${<}\kappa^+$-completeness (i.e. that are the union of $\kappa$ sets in the relevant ideal).
\end{definition}

\begin{proposition}
	Let $\SS$ be a $\CCC$-system of filters in $V$. Then the following are equivalent:
	\begin{enumerate}
		\item the ultrapower map $\dot{k} = j_{\dot{\ff}(\SS)}$ has critical point $\kappa$ with boolean value $\1$;
		\item $\SS$ is a $\ap{\kappa, \lambda}$-system of filters;
		\item $\SS$ has densely completeness $\kappa$;
	\end{enumerate}
	Moreover, if $\kappa \in \CCC$ then the statements above are also equivalent to
	\begin{enumerate}
		\item[4.] $\bp{\id \res \alpha : ~ \alpha < \kappa} \in F_\kappa$.
	\end{enumerate}
\end{proposition}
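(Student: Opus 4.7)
The equivalence $(1) \Leftrightarrow (2)$ is precisely Proposition~\ref{prop:critical_point}. The plan is then to close the cycle via $(2) \Rightarrow (3) \Rightarrow (1)$, and to handle $(1) \Leftrightarrow (4)$ separately under the hypothesis $\kappa \in \CCC$.

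For $(2) \Rightarrow (3)$: Proposition~\ref{prop:completeness} gives ${<}\kappa$-completeness of every $F_a$, so $\SS$ has completeness at least $\kappa$. For the dense failure of ${<}\kappa^+$-completeness, fix $A \in I_a^+$ and pick some $x \in \bigcup\CCC$ of rank $\kappa$ (available in the non-trivial case $\lambda > \kappa$ via $\rank(\CCC) = \lambda$ and transitivity of $\bigcup\CCC$); then $\bp{x} \in \CCC$ by the ideal property, and $b = a \cup \bp{x} \in \CCC$. Set
\[
B = \pi^{-1}_{ba}[A] \cap \bp{f \in O_b : \rank(f(x)) < \kappa}.
\]
The intersected set lies in $F_b$ by Proposition~\ref{prop:induced_system}, since $(j\res b)^{-1}(j(x)) = x$ has rank $\kappa < j(\kappa)$ with boolean value $\1$; hence $B \in I_b^+$. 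Partition $B = \bigsqcup_{\alpha<\kappa} B_\alpha$ with $B_\alpha = \bp{f \in B : \rank(f(x)) = \alpha}$; each $B_\alpha \in I_b$ because $B_\alpha \in F_b$ would, by Proposition~\ref{prop:induced_system}, require $\rank(x) = j(\alpha) = \alpha < \kappa$. Thus $B \leq_\SS A$ is the required witness.

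For $(3) \Rightarrow (1)$: If some $[A]_\SS$ forced $\crit(j_{\dot{\ff}(\SS)}) = \mu < \kappa$, then $\mu$ would be represented in the ultrapower by some $V$-function $f : O_{a'} \to \mu$ below a refinement $[A']_\SS$, and the partition $\bp{A' \cap f^{-1}(\alpha) : \alpha < \mu}$ of $A'$ into $\mu$ sets in $I_{a'}$ would violate ${<}\kappa$-additivity of $I_{a'}$ (since $A' \in I_{a'}^+$). Hence $\crit \geq \kappa$ with boolean value $\1$. Conversely, any witness $B = \bigcup_{\alpha<\kappa} B_\alpha \in I_b^+$ with $B_\alpha \in I_b$ exhibits, below $[B]_\SS$, a $\kappa$-sequence $\bp{O_b \setminus B_\alpha}_{\alpha<\kappa}$ of $\dot{F}_b$-sets whose intersection $O_b \setminus B$ is forced out of $\dot{F}_b$; this forces $\crit(j_{\dot{F}_b}) \leq \kappa$ and hence $\crit(j_{\dot{\ff}(\SS)}) \leq \kappa$ below $[B]_\SS$. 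Density of such $B$ lifts the inequality to boolean value $\1$, giving $\crit = \kappa$.

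For $(1) \Leftrightarrow (4)$ under $\kappa \in \CCC$: Proposition~\ref{prop:induced_system} and elementarity combine to give $\bp{\id\res\alpha : \alpha < \kappa} \in F_\kappa$ iff $\1 \Vdash_\SS (j\res\kappa)^{-1} \in \bp{\id\res\alpha : \alpha < j(\kappa)}$. A case analysis on $\crit(j)$ closes this: if $\crit(j) < \kappa$ then $j[\kappa]$ skips the critical point and $(j\res\kappa)^{-1}$ fails to have an ordinal as domain; if $\crit(j) > \kappa$ then $j(\kappa) = \kappa$ and $(j\res\kappa)^{-1} = \id\res\kappa$ would require the impossible $\alpha = \kappa < j(\kappa) = \kappa$; only $\crit(j) = \kappa$ yields $(j\res\kappa)^{-1} = \id\res\kappa$ with $\alpha = \kappa < j(\kappa)$ witnessing membership. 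I expect the main obstacle to be the density construction in $(2) \Rightarrow (3)$, where one must produce the witness $x$ of rank $\kappa$ uniformly for all $a \in \CCC$ and carefully verify via Proposition~\ref{prop:induced_system} that the partition pieces belong to the ideal; the forcing extraction of a $V$-side function in $(3) \Rightarrow (1)$ is routine but still requires care to isolate a single representative below a suitable refinement.
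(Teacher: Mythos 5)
Your overall route coincides with the paper's: $(1)\Leftrightarrow(2)$ by citing Proposition \ref{prop:critical_point}, then $(2)\Rightarrow(3)\Rightarrow(1)$, and $(1)\Leftrightarrow(4)$ via Proposition \ref{prop:induced_system}; your $(3)\Rightarrow(1)$ and $(1)\Leftrightarrow(4)$ are essentially the paper's arguments in different packaging and are fine. There is, however, a genuine gap in your $(2)\Rightarrow(3)$: the witness $x \in \bigcup\CCC$ with $\rank(x)=\kappa$ need not exist, and the condition $\lambda>\kappa$ you invoke does not supply it. A $\ap{\kappa,\lambda}$-system of filters only guarantees $\kappa\subseteq\bigcup\CCC$, hence $\lambda\geq\kappa+1$; when $\lambda=\kappa+1$ (e.g.\ $\CCC=\PPP(\kappa)$ or $\CCC=V_{\kappa+1}$, which is exactly the case of a single normal ideal on $\kappa$ or on $\PPP(V_\kappa)$ presented as a system of filters) every domain has rank at most $\kappa$, so every element of $\bigcup\CCC$ has rank strictly below $\kappa$ and your construction of $B$ has no starting point, even though item (3) is still true in that case. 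The paper avoids choosing a point of $\bigcup\CCC$ altogether: since $\1\Vdash_\SS \check\kappa < j(\check\kappa)$, it takes a name $\dot u$ for a function representing the ordinal $\kappa$ in the generic ultrapower; densely many $B$ decide $\dot u = \check v$ with $v : B \to \kappa$, and the fibers $v^{-1}[\bp{\alpha}]$ give the required partition into $I_b$-sets. Your argument is the special case of this in which $f\mapsto\rank(f(x))$ plays the role of $v$, and it only covers systems whose $\bigcup\CCC$ actually contains an element of rank $\kappa$.

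A smaller point in the same step: you justify $B_\alpha\in I_b$ by saying that ``$B_\alpha\in F_b$ would require $\rank(x)=j(\alpha)=\alpha$''. For a filter that is not an ultrafilter, $B_\alpha\notin F_b$ does not yield $B_\alpha\in I_b$, so this is not quite the right implication to refute. What you need, and what is true, is that the boolean value $[B_\alpha]_{I_b}$ is $\0$: by Proposition \ref{prop:induced_system} applied to the generic system $\dot{\ff}(\SS)$ one has $[B_\alpha]_{I_b}=\Qp{(j\res b)^{-1}\in j(\check B_\alpha)}_\SS\leq\Qp{\rank(\check x)=j(\check\alpha)}_\SS=\0$, whence $B_\alpha\in\ii(\dot{\ff}(I_b))=I_b$. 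The analogous points in your $(3)\Rightarrow(1)$ are already phrased in terms of boolean values over the quotient and do not suffer from this conflation.
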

\begin{proof}
	$(1) \Leftrightarrow (2)$: has already been proved in Proposition \ref{prop:critical_point}.

	$(2) \Rightarrow (3)$: By Proposition \ref{prop:completeness}, we know that $F_a$ is ${<}\kappa$-complete for all $a \in \CCC$. Let $\dot{u}$ be a name for a function representing $\kappa$ in $\Ult(V, \dot{\ff}(\SS))$. Then there are densely many $B \in I_b^+$ deciding that $\dot{u} = \check{v}$, for some $v : B \to \kappa$. Since 
	\[
	\Qp{[v]_{\dot{\ff}(\SS)} \neq \alpha = [\dom_\alpha]_{\dot{\ff}(\SS)}}_\SS \geq B
	\]
	for all $\alpha < \kappa$, $B_\alpha = B \wedge v^{-1}[\bp{\alpha}] \in I_b$ for any such $B$ hence $B = \bigcup_{\alpha < \kappa} B_\alpha$ disproves ${<}\kappa^+$-completeness.

	$(3) \Rightarrow (1)$: We prove by induction on $\alpha < \kappa$ that $\1 \Vdash_\SS j(\check{\alpha}) = \check{\alpha}$. Let $u : A \to \alpha$ with $A \in I_a^+$ be representing an ordinal smaller than $j(\alpha)$ in the ultrapower, and let $A_\beta = u^{-1}\qp{\bp{\beta}}$ for $\beta < \alpha$. Since $A = \bigcup_{\beta < \alpha} A_\beta$ and $\SS$ is ${<}\kappa$-complete, the conditions $A_\beta$ form a maximal antichain below $A$ hence $[u]_{\dot{\ff}(\SS)}$ is forced to represent some $\beta < \alpha$. Furthermore, there are densely many $B \in I_b^+$ that are a union of $\kappa$-many sets $B_\alpha \in I_b$. From any one of them we can build a function $u : B \to \kappa$, $u(f) = \alpha_f$ where $f \in B_{\alpha_f}$, so that $B$ forces that $[u]_{\dot{\ff}(\SS)} < j(\kappa)$ and $[u]_{\dot{\ff}(\SS)} > \alpha$ for all $\alpha < \kappa$. Thus $B \Vdash_\SS j(\kappa) > \kappa$ for densely many $B$.

	Assume now that $\kappa \in \CCC$. Then $(1) \Leftrightarrow (4)$ follows from Proposition \ref{prop:rappcanonici} and \L o\'s theorem, since $\bp{\id \res \alpha : ~ \alpha < \kappa}$ is equal to
	\[
		\bigwedge_{\alpha < \kappa} \Qp{[\ran_\alpha]_{\dot{\ff}(\SS)} = j(\alpha)}_\SS \wedge \Qp{[\ran_\kappa]_{\dot{\ff}(\SS)} < j(\kappa)}_\SS = \Qp{j[\kappa] = \kappa \wedge j(\kappa) > \kappa}_\SS. \qedhere
	\]
\end{proof}

A similar approach can apply also to tallness-related properties.

\begin{proposition}
	Let $\SS$ be a $\ap{\kappa, \lambda}$-system of filters in $V$. The ultrapower map $j = j_{\dot{\ff}(\SS)}$ is $\gamma$-tall for $\gamma < \lambda$ iff $\bp{f \in O_{\bp{x}} : \rank(f(x)) \leq \kappa} \in F_{\bp{x}}$ for some $x \in \bigcup \CCC$ with $\rank(x) = \gamma$.
\end{proposition}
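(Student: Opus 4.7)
The plan is to transfer the statement into the generic extension $V^\SS$ and invoke \L o\'s Theorem~\ref{thm:los} applied to the generic ultrapower $j = j_{\dot{\ff}(\SS)}$. The crucial input is Proposition~\ref{prop:rappcanonici}(2), which identifies $[\proj_x]_{\dot{\ff}(\SS)}$ with $x$ itself whenever $x \in \bigcup\CCC$, under the usual identification of the well-founded part with its Mostowski collapse. Combined with the absoluteness of rank on the well-founded part, this yields $\rank\bigl([\proj_x]_{\dot{\ff}(\SS)}\bigr) = \rank(x)$ with boolean value $\1$.

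To carry this out, I would first observe that for any $x \in \bigcup\CCC$ the transitivity of $\bigcup\CCC$ together with the ideal property gives $\{x\} \in \CCC$, so $O_{\{x\}}$ and $F_{\{x\}}$ are defined and $\proj_x : f \mapsto f(x)$ is a legitimate function representing an element of the ultrapower. Applying \L o\'s Theorem to the formula ``$\rank(y) \leq z$'' with parameters $[\proj_x]$ and $[c_\kappa] = j(\check\kappa)$, one obtains
\[
\{f \in O_{\{x\}} : \rank(f(x)) \leq \kappa\} \in F_{\{x\}} \iff \1 \Vdash_\SS \rank\bigl([\proj_x]_{\dot{\ff}(\SS)}\bigr) \leq j(\check\kappa),
\]
and using $[\proj_x] = x$ and $\rank(x) = \gamma$ the right-hand side reduces to $\1 \Vdash_\SS j(\check\kappa) \geq \check\gamma$, which is precisely the assertion that $j$ is $\gamma$-tall. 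Both directions of the proposition follow immediately from this equivalence, once we have a witness $x$.

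The only subtlety requiring additional justification is the existence, in the forward direction, of some $x \in \bigcup\CCC$ with $\rank(x) = \gamma$. Since $\gamma < \lambda = \rank(\CCC)$, pick $a \in \CCC$ with $\rank(a) > \gamma$; then $a$ contains some $y \in \bigcup\CCC$ of rank at least $\gamma$, and the standard fact that $\trcl(\{y\})$ meets every rank strictly below $\rank(y)+1$ produces $x \in \trcl(\{y\}) \subseteq \bigcup\CCC$ of rank exactly $\gamma$ by transitivity. For the backward direction no existence argument is needed: we simply apply the displayed equivalence to the given witness. I expect no serious obstacle beyond careful bookkeeping of the rank-absoluteness step and checking that $\{x\}$ lies in $\CCC$.
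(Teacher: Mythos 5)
Your proof is correct and follows essentially the same route as the paper's: identify $[\proj_x]_{\dot{\ff}(\SS)}$ with $x$ via Proposition~\ref{prop:rappcanonici}(2) and translate membership of the given set in $F_{\bp{x}}$ into $\Qp{\rank([\proj_x]_{\dot{\ff}(\SS)})\leq j(\kappa)}_\SS=\1$ by \L o\'s's Theorem. Your extra verification that a witness $x\in\bigcup\CCC$ of rank exactly $\gamma$ exists is a detail the paper leaves implicit, and is a welcome addition rather than a deviation.
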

\begin{proof}
	By Proposition \ref{prop:rappcanonici} and \L o\'s theorem the above set is equal to
	\[
	\Qp{\gamma = \rank(x) = \rank([\proj_x]_{\dot{\ff}(\SS)}) \leq j(\kappa)}_\SS. \qedhere
	\]
\end{proof}

\subsubsection{Measurability}

We say that a cardinal is measurable iff there is an elementary embedding $j : V \to M$ with critical point $\kappa$ such that the image is well-founded. Its generic counterpart can be characterized for $\CCC$-systems of filters by means of the following definition.

\begin{definition}
	Let $\SS$ be a $\CCC$-system of filters in $V$. We say that $\SS$ is \emph{precipitous} iff for every $B \in \SS^+$ and sequence $\ap{\AAA_\alpha : ~ \alpha < \omega} \in V$ of maximal antichains in $<_\SS$ below $B$, there are $\bar{A}_\alpha \in \AAA_\alpha$, $\bar{A}_\alpha \in I_{\bar{a}_\alpha}^+$ and $h : \bigcup_\alpha \bar{a}_\alpha \to V$ such that $\pi_{\bar{a}_\alpha}(h) \in \bar{A}_\alpha$ for all $\alpha < \omega$.
\end{definition}

This definition is equivalent to \cite[Def. 4.4.\textit{ii}]{claverie:ideal_extenders} for ideal extenders, and to ${<}\omega$-closure for extenders in $V$ (see \cite{koellner:large_cardinals}), while being applicable also to other systems of filters. The results relating these definitions with well-foundedness are subsumed in the following.

\begin{theorem}
	Let $\SS$ be a $\CCC$-system of filters in $V$. The ultrapower map $j = j_{\dot{\ff}(\SS)}$ is well-founded iff $\SS$ is precipitous.
\end{theorem}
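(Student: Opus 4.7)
The plan is to prove both directions by contradiction, adapting the classical precipitousness characterization for normal ideals to the $\CCC$-system framework.

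For the direction $\SS$ precipitous $\Rightarrow$ $j_{\dot{\ff}(\SS)}$ well-founded, suppose instead that some $B \in \SS^+$ forces the existence of names $\ap{\dot{u}_n : n < \omega}$ with $B \Vdash [\dot{u}_{n+1}]_{\dot{\ff}(\SS)} \in [\dot{u}_n]_{\dot{\ff}(\SS)}$. First I would construct, by density, maximal antichains $\AAA_n$ below $B$ whose members $A$ decide $\dot{u}_0, \dots, \dot{u}_n$ as explicit functions $v^A_0, \dots, v^A_n$ on $O_{a_A}$, arranged so that three pointwise properties hold simultaneously: the $\in$-descent $v^A_n(f) \in \cdots \in v^A_0(f)$ for every $f \in A$; the refinement $\AAA_{n+1}$ of $\AAA_n$ is pointwise, meaning each $A' \in \AAA_{n+1}$ satisfies $a_A \subseteq a_{A'}$ and $A' \subseteq \pi^{-1}_{a_{A'}, a_A}[A]$ for a unique $A \in \AAA_n$; and $v^{A'}_i = v^A_i \circ \pi_{a_{A'}, a_A}$ pointwise on $A'$ for $i \leq n$. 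Each such condition can be secured by shrinking candidate members along $I$-null sets read off from the forcing relation for $\dot{\ff}(\SS)$ via Proposition~\ref{prop:ideal_generic}. Applying precipitousness then yields $\bar{A}_n \in \AAA_n$ and $h : \bigcup_n \bar{a}_n \to V$ with $h \res \bar{a}_n \in \bar{A}_n$; by pointwise refinement I may replace each $\bar{A}_n$ by the unique $A^*_n \in \AAA_n$ above $\bar{A}_{n+1}$ (through which $h$ must also thread), obtaining a refining chain. Setting $y_n := v^{\bar{A}_n}_n(h \res \bar{a}_n)$, the combination of pointwise descent and pointwise refinement yields $y_{n+1} \in y_n$ in $V$ for every $n$, contradicting foundation.

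For the converse direction, suppose $\SS$ is not precipitous, as witnessed in $V$ by some $B$ and antichains $\ap{\AAA_n}$. The plan is to exploit this failure to build a forced descending $\in$-sequence in the ultrapower. Consider the tree $T \in V$ of finite coherent partial threads $\ap{\ap{A_0, \dots, A_N}, h_N}$ with $A_i \in \AAA_i$, $h_N \in V$ defined on $\bigcup_{i \leq N} a_{A_i}$, and $h_N \res a_{A_i} \in A_i$ for $i \leq N$: non-precipitousness says $T$ has no cofinal branch, so $T$ is well-founded and admits an ordinal rank function $\rho$. Encoding $\rho$ as a family of functions $w_n$ in $V$ whose class in the generic ultrapower captures the rank of the generically chosen thread prefix of length $n$, the sequence $[w_n]_{\dot{\ff}(\SS)}$ is $B$-forced to strictly descend in the ordinals of $\Ult(V, \dot{\ff}(\SS))$, contradicting well-foundedness of $j_{\dot{\ff}(\SS)}$.

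The principal obstacle lies in the first direction: the three pointwise properties must be secured simultaneously, and must hold genuinely rather than merely modulo $I$. This requires a careful recursive construction of the $\AAA_n$'s by repeated shrinking, with each step exploiting the correspondence between forcing statements about $\dot{\ff}(\SS)$ and combinatorial properties of $\SS$ (Proposition~\ref{prop:ideal_generic}); without pointwise control one cannot guarantee that the thread $h$ produced by precipitousness actually witnesses $\in$-descent rather than descent on an $I$-measure-one set. The converse direction's subtle point is that the thread $h$ in the precipitousness definition is a global object rather than per-condition data, so the rank must be assigned to finite partial threads rather than individual conditions; once this framing is adopted, the remaining work is a standard forcing-relation unwinding.
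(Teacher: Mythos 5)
Your plan coincides with the paper's proof in both directions: for sufficiency the paper likewise converts the names $\dot u_n$ into maximal antichains of conditions that decide them and are pointwise contained in the preimages of the sets $\dot B_{n+1} = \bp{x : \dot u_{n+1}(\pi_{\dot a_{n+1}}(x)) \in \dot u_n(\pi_{\dot a_n}(x))}$ (your ``pointwise descent secured by shrinking along $I$-null sets''), then reads an infinite $\in$-descending chain in $V$ off the precipitousness thread $h$; for necessity it builds the same well-founded tree of finite partial threads $\ap{\BBB,f}$, takes a rank-like function $r$, and uses $u_\BBB(f)=r(\ap{\BBB,f})$ to name a descending $\omega$-chain in the ultrapower. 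The coherence of the decided values across the non-nested $\bar A_n$'s, which you single out as the principal obstacle, is exactly the point the paper treats most tersely, so your added insistence on pointwise refinement and coherent decisions is an elaboration of, rather than a departure from, the published argument.
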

\begin{proof}
	First, suppose that $\SS$ is precipitous and assume by contradiction that $B$ forces the ultrapower to be ill-founded. Let $\ap{\dot{u}_n : ~ n < \omega}$ be $\SS$-names for functions $\dot{u}_n : O_{\dot{a}_n} \to V$ in $U_\SS$ such that $\Qp{[\dot{u}_{n+1}]_{\dot{\ff}(\SS)} \in [\dot{u}_n]_{\dot{\ff}(\SS)}}_\SS \geq B$. Define $\dot{b}_n = \bigcup \bp{ \dot{a}_m: {m \leq n}}$, $\dot{B}_0 = O_{\dot{b}_0}$, and
	\[
	\dot{B}_{n+1} = \bp{x \in O_{\dot{b}_{n+1}} : ~ \dot{u}_{n+1}(\pi_{\dot{a}_{n+1}}(x)) \in \dot{u}_n(\pi_{\dot{a}_n}(x))}
	\]
	so that $\Qp{\dot{B}_n \in \dot{\ff}(\SS)}_\SS \geq B$. Fix $n < \omega$. By the forcing theorem there is a dense set of $A$ in $\SS$ below $B$ deciding the values of $\dot{u}_n$, $\dot{B}_n$; and every such $A \Vdash \dot{B}_n = \check{B}_n$ must force that $\check{B}_n \in \dot{\ff}(\SS)$ hence satisfy $A <_\SS B_n$. It follows that the set of $A \in I_a^+$ deciding $\dot{u}_n$, $\dot{B}_n$ and with the additional property that every such $A$ satisfy $a \supseteq b_n$, $A \subseteq \pi_a^{-1}[B_n]$, is also dense below $B$. Let $\AAA_n$ be a maximal antichain below $B$ in this set.

	Let $\bar{A}_n$, $\bar{a}_n$, $h:\bar{a}=\cup_{n}\bar{a}_n$ be obtained from $\ap{\AAA_n : ~ n < \omega}$ by precipitousness of $\SS$. Let also $u_n$, $B_n$ be such that $\bar{A}_n \Vdash \dot{u}_n = \check{u}_n \wedge \dot{B}_n = \check{B}_n$. Then $\pi_{\bar{a}\bar{a}_n}(h) \in \bar{A}_n \subseteq \pi_{\bar{a}_n}^{-1}[B_n]$ and $\pi_{\bar{a}b_n}(h) \in B_n$ for all $n < \omega$. Thus, $u_{n+1}(\pi_{a_{n+1}}(h)) \in u_n(\pi_{a_n}(h))$ is an infinite descending chain in $V$, a contradiction.

	Suppose now that $\SS$ is not precipitous, and fix $B$, $\ap{\AAA_n : ~ n < \omega}$ witnessing it. Define a tree $T$ of height $\omega$ consisting of couples of sequences $\ap{\BBB, f}$ such that $\BBB_n \in \AAA_n$ for all $n < \vp{\BBB} < \omega$ and $f \in \bigwedge \BBB$, ordered by member-wise inclusion. Since $\ap{\AAA_n : ~ n < \omega}$ contradicts precipitousness, the tree $T$ has no infinite chain and we can define a rank-like function $r : T \to \ON$ by well-founded recursion on $T$ as $r(x) = \bigcup \bp{r(y)+1 : ~ y <_T x}$. Notice that $y <_T x$ implies $r(y) < r(x)$.

	Let $\BBB$ be as above, and define $u_\BBB : \bigwedge \BBB \to V$ by $u_\BBB(f) = r(\ap{\BBB, f})$. Let $\dot{u}_n$ be the $\SS$-name defined by $\dot{u}_n = \bp{\ap{\check{u}_\BBB, \bigwedge \BBB} : ~ \BBB \in \Pi_{m \leq n} \AAA_m }$. Then any $\BBB \in \Pi_{m \leq n+1} \AAA_m$ forces $\dot{u}_{n+1}$ to be $u_\BBB$, $\dot{u}_n$ to be $u_{\BBB \res n}$, and $\dot{u}_{n+1} \in \dot{u}_n$ since for all $f \in \bigwedge \BBB$,
	\[
	u_\BBB(f) = r(\ap{\BBB, f}) < r(\ap{\BBB \res n, f'}) = u_{\BBB \res n}(f')
	\]
	where $f' = f \res \dom(\bigwedge (\BBB \res n))$. Since $\bp{ \bigwedge \BBB : ~ \BBB \in \Pi_{m \leq n+1} \AAA_m}$ forms a maximal antichain below $B$ for every $i$, $B$ forces that $\ap{\dot{u}_n : ~ n < \omega}$ is a name for an ill-founded chain.
\end{proof}

\subsubsection{Strongness}

In this section we cover large cardinal properties defined in terms of the existence of elementary embeddings $j : V \to M \subseteq V[G]$ with certain degree of strongness (i.e. such that $V^{V[G]}_\gamma \subseteq M$ for some appropriate $\gamma$). Main examples of such properties are strongness, superstrongness and variants of them. We now present a criterion to characterize $\gamma$-strongness for an elementary embedding $j_{\dot{\ff}(\SS)}$, which can in turn be applied in order to characterize all of the aforementioned large cardinal properties. To our knowledge, there is no equivalent version of the content of this section in the classical tower or extender setting.

\begin{definition}
	Let $\SS$ be a $\CCC$-system of filters, $\AAA_0 \cup \AAA_1$ be an antichain in $\SS^+$. We say that $\ap{\AAA_0, \AAA_1}$ \emph{is split by} $\SS$ iff there exist a $b \in \CCC$ and $B_0, B_1$ disjoint in $\PPP(O_b)$ such that $A \leq_\SS B_n$ for all $A \in \AAA_n$, $n < 2$.

	We say that a family of antichains $\ap{\AAA_{\alpha0} \cup \AAA_{\alpha1} : ~ \alpha < \mu}$ is \emph{simultaneously} split by $\SS$ iff there is a single $b \in \CCC$ witnessing splitting for all of them.
\end{definition}

\begin{definition}
	Let $\SS$ be a $\CCC$-system of filters. We say that $\SS$ is ${<}\gamma$-\emph{splitting} iff for all sequences $\ap{\AAA_{\alpha0} \cup \AAA_{\alpha1} : ~ \alpha < \mu}$ of maximal antichains with $\mu < \gamma$, there are densely many $B \in \SS^+$ such that the antichains $\ap{\AAA_{\alpha0} \res B, \AAA_{\alpha1} \res B}$ for $\alpha < \mu$ are simultaneously split by $\SS$.
\end{definition}

\begin{theorem}[A., S., Viale] \label{thm:strongness_splitting}
	Let $\SS$ be a ${<}\gamma$-directed $\CCC$-system of filters. Then the ultrapower $\Ult(V, \dot{\ff}(\SS))$ contains $\PPP^{V^\SS}(\mu)$ for all $\mu < \gamma$ iff $\SS$ is ${<}\gamma$-splitting.
\end{theorem}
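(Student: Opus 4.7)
The statement is an equivalence between a semantic property of the generic ultrapower and a combinatorial property of $\SS$. The natural plan is to translate back and forth between subsets of $\mu$ appearing in $V^\SS$ and functions $u : O_b \to \PPP(\mu)$ representing elements of $\Ult(V,\dot{\ff}(\SS))$, letting simultaneous splitting provide the bridge between the ``antichain view'' (forcing) and the ``functional view'' (ultrapower). Throughout the argument the key observation is that for $A \in I_a^+$ and $C \in I_c^+$, one has $A \leq_\SS C$ if and only if $A \Vdash_\SS \check{C} \in \dot{\ff}(\SS)$, since $\Qp{\check{C} \in \dot{\ff}(\SS)}_\SS = [C]_\SS$. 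The background hypothesis that $\CCC$ is ${<}\gamma$-directed is what guarantees the existence of a single $b \in \CCC$ coding the data of $\mu < \gamma$ many antichains, and is implicit in both the statement of ${<}\gamma$-splitting and the production of representing functions.

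For the direction $(\Leftarrow)$, assume ${<}\gamma$-splitting and let $\dot X$ be an $\SS$-name for a subset of $\mu < \gamma$. For each $\alpha < \mu$ pick a maximal antichain $\AAA_{\alpha 0} \cup \AAA_{\alpha 1}$ whose elements decide $\alpha \in \dot X$. Given any $B_0 \in \SS^+$, use ${<}\gamma$-splitting to find $B \leq_\SS B_0$, $b \in \CCC$, and disjoint $B^{\alpha 0}, B^{\alpha 1} \subseteq O_b$ simultaneously splitting the antichains $\ap{\AAA_{\alpha 0}\res B, \AAA_{\alpha 1}\res B}$. Define $u : O_b \to \PPP(\mu)$ by $u(f) = \bp{\alpha < \mu : f \in B^{\alpha 0}}$. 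Combining the key observation with maximality of $\AAA_{\alpha 0} \cup \AAA_{\alpha 1}$ below $B$ one shows $B \Vdash_\SS [\check u]_{\dot{\ff}(\SS)} = \dot X$, and by density $\1 \Vdash_\SS \dot X \in \Ult(V,\dot{\ff}(\SS))$.

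For the direction $(\Rightarrow)$, fix a family $\ap{\AAA_{\alpha 0} \cup \AAA_{\alpha 1} : \alpha < \mu}$ of maximal antichains with $\mu < \gamma$ and a condition $B_0 \in \SS^+$. In $V^\SS$ define $\dot X = \bp{\alpha < \mu : \dot G_\SS \cap \AAA_{\alpha 0} \neq \emptyset}$. By hypothesis $\1 \Vdash_\SS \dot X \in \Ult$, so there is an $\SS$-name $\dot u$ for a ground-model element of $U_\SS$ with $\1 \Vdash_\SS [\dot u]_{\dot{\ff}(\SS)} = \dot X$. By density find $B \leq_\SS B_0$ deciding $\dot u = \check u$ for a fixed $u : O_b \to \PPP(\mu)$ in $V$, and set $B^{\alpha 0} = \bp{f \in O_b : \alpha \in u(f)}$, $B^{\alpha 1} = O_b \setminus B^{\alpha 0}$. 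For $A \in \AAA_{\alpha n}$ compatible with $B$, the condition $A \wedge B$ forces both $[\check u]_{\dot{\ff}(\SS)} = \dot X$ and the appropriate value of $\alpha \in \dot X$; the key observation then yields $A \wedge B \leq_\SS B^{\alpha n}$, which is exactly the splitting of $\AAA_{\alpha n}\res B$ with a common $b$, as required.

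The main technical nuisance is in the forward direction: one must argue that, although $\Ult \supseteq \PPP^{V^\SS}(\mu)$ is an external statement, the specific name $\dot X$ admits a representative $\dot u$ that is itself an $\SS$-name for a ground-model element of $U_\SS$ (by mixing over a maximal antichain of conditions deciding a representing function and its domain $O_b \in \CCC$). Only once this reduction is made does the density argument deliver a uniform $b \in \CCC$ and honest ground-model function $u$, from which the sets $B^{\alpha n} \subseteq O_b$ witnessing simultaneous splitting can be read off. The reverse direction is essentially a dualization of the same calculation, with splitting replacing the role played by the representing function $u$.
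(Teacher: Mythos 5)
Your overall architecture --- maximal antichains deciding $\check{\alpha}\in\dot X$ in one direction, reading off the splitting sets from a representing function in the other, plus the reduction to ground-model representatives --- is the same as the paper's, but there is a genuine error in how you represent the ordinals $\alpha<\mu$ inside the ultrapower, and it breaks both directions. In $(\Leftarrow)$ you set $u(f)=\bp{\alpha<\mu : f\in B^{\alpha 0}}$, so $u(f)$ is a set of \emph{actual} ordinals; by \L o\'s, $[\check u]_{\dot{\ff}(\SS)}$ then contains $j(\alpha)=[c_\alpha]_{\dot{\ff}(\SS)}$ exactly when $B^{\alpha0}\in\dot{\ff}(\SS)$, whereas what you need is for it to contain the ordinal $\alpha$ itself, i.e.\ $[\proj_\alpha]_{\dot{\ff}(\SS)}$, which is represented by $f\mapsto f(\alpha)$ and not by the constant function. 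For $\mu\le\crit(j)$ the two coincide, but the theorem's whole point is to characterize $\gamma$-strongness for $\gamma$ far above the critical point, and there your $[\check u]$ is a subset of $j(\mu)$ that does not equal $\dot X\subseteq\mu$. Symmetrically, in $(\Rightarrow)$ your $B^{\alpha0}=\bp{f : \alpha\in u(f)}$ computes the Boolean value of $j(\check\alpha)\in[\check u]_{\dot{\ff}(\SS)}$, not of $\check\alpha\in[\check u]_{\dot{\ff}(\SS)}=\dot X$, so the sets you produce need not split the antichains $\AAA_{\alpha n}$.

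The repair is exactly what the paper does: first fix $a\in\CCC$ and functions $u_\alpha:O_a\to\ON$ with $\Qp{[\check u_\alpha]_{\dot{\ff}(\SS)}=\check\alpha}_\SS=\1$ for all $\alpha<\gamma$ (Proposition \ref{prop:rappcanonici} supplies $\proj_\alpha$, and ${<}\gamma$-directedness lets one take a single domain $a$), and then replace every occurrence of ``$\alpha\in u(f)$'' by ``$u_\alpha(\pi_{ba}(f))\in u(f)$''. Concretely, in $(\Leftarrow)$ define $v(f)=\bp{u_\alpha(\pi_{ba}(f)) : f\in B^{\alpha1},\ \alpha<\mu}$, and in $(\Rightarrow)$ define $B_{\alpha0}=\bp{f\in B : u_\alpha(\pi_{ba}(f))\in v(f)}$ and $B_{\alpha1}=B\setminus B_{\alpha0}$. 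With that substitution your argument goes through and coincides with the paper's proof; everything else in your write-up (the identification of $\leq_\SS$ with forcing membership in $\dot{\ff}(\SS)$, and the density and mixing argument producing a single $b\in\CCC$ and an honest ground-model $u$) is correct.
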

\begin{proof}
	Let $a \in \CCC$, $u_\alpha : O_a \to \ON$ be such that $\Qp{ [\check{u}_\alpha]_{\dot{\ff}(\SS)} = \check{\alpha} }_\SS = \1$ for all $\alpha < \gamma$.
	First, suppose that $\SS$ is ${<}\gamma$-splitting and let $\dot{X}$ be a name for a subset of $\mu < \gamma$. Let $\AAA_{\alpha0} \cup \AAA_{\alpha1}$ for $\alpha < \mu$ be a maximal antichain deciding whether $\check{\alpha} \in \dot{X}$ and $\SSS$ be generic for $\SS$. By ${<}\gamma$-splitting let $B \in \SSS$ be such that $a \subseteq b \in \CCC$,  $B \subseteq O_b$ and $\ap{\AAA_{\alpha0} \res B, \AAA_{\alpha1} \res B}$ is split by $\SS$ in $B_{\alpha0}, B_{\alpha1}$ partitioning $B$ for all $\alpha < \mu$. Then we can define
	\[
	\begin{array}{rrcl}
		v: & B & \longrightarrow  & \PPP(\ON) \\
		   & f & \longmapsto & \bp{u_\alpha(\pi_{ba}(f)) : f \in B_{\alpha1}, ~ \alpha \in \mu}
	\end{array}
	\]
	Then $B$ forces that $[v]_{\dot{\ff}(\SS)} = \dot{X}$, and $B \in \SSS$ so $\val(\dot{X}, \SSS) = [v]_\SSS$ is in $\Ult(V, \SSS)$.

	Suppose now that $\Ult(V, \dot{\ff}(\SS))$ contains $\PPP^{V[\dot{\ff}(\SS)]}(\mu)$ for all $\mu < \gamma$, and let $\ap{\AAA_{\alpha0} \cup \AAA_{\alpha1} : ~ \alpha < \mu}$ be maximal antichains with $\mu < \gamma$. Let $\dot{X} = \bp{\ap{\check{\alpha}, A} : ~ A \in \AAA_{\alpha1}}$ be the corresponding name for a subset of $\mu$, and let $B$, $v : B \to \PPP(\ON)$ be such that $B \Vdash [v]_{\dot{\ff}(\SS)} = \dot{X}$. Let $B_{\alpha0} = \bp{f \in B : ~ u_\alpha(\pi_{ba}(f)) \in v(f)}$, $B_{\alpha1} = B \setminus B_{\alpha0}$. Then $\ap{\AAA_{\alpha0} \res B, \AAA_{\alpha1} \res B}$ is split by $B_{\alpha0}, B_{\alpha1}$ partitioning $B$ for all $\alpha < \mu$.
\end{proof}

\begin{corollary}
	Let $\gamma$ be a limit ordinal, $\SS$ be a ${<}\beth_\gamma$-directed $\CCC$-system of filters. Then the ultrapower $\Ult(V, \dot{\ff}(\SS))$ is $\gamma$-strong iff $\SS$ is ${<}\beth_\gamma$-splitting.
\end{corollary}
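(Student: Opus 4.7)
The plan is to invoke Theorem~\ref{thm:strongness_splitting} with its $\gamma$ instantiated at $\beth_\gamma$: under the stated hypotheses of ${<}\beth_\gamma$-directedness and ${<}\beth_\gamma$-splitting, this gives that $\Ult(V,\dot{\ff}(\SS))$ contains $\PPP^{V^\SS}(\mu)$ for every $\mu<\beth_\gamma$, and the converse implication holds whenever that powerset condition is satisfied. The remaining task is to bridge the powerset condition with $\gamma$-strongness of the ultrapower, i.e.\ with the assertion $V_\gamma^{V^\SS}\subseteq\Ult(V,\dot{\ff}(\SS))$.

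The bridge comes from the earlier proposition in Section~\ref{sec:system_filters} which shows that $\SS^+/{=_\SS}$ is a ${<}\beth_\gamma$-closed boolean algebra whenever $\SS$ is ${<}\beth_\gamma$-directed. In particular, forcing with $\SS$ adds no new subsets of any cardinal strictly below $\beth_\gamma$, so an easy induction on $\alpha\leq\gamma$ (using at each successor stage that $\vp{V_\alpha^V}=\beth_\alpha^V<\beth_\gamma$ since $\gamma$ is a limit) gives $V_\alpha^V=V_\alpha^{V^\SS}$ for every $\alpha<\gamma$, and likewise $\PPP^V(\mu)=\PPP^{V^\SS}(\mu)$ for every $\mu<\beth_\gamma$. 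Consequently $V_\gamma^{V^\SS}=V_\gamma^V$, and the ambient computations of powersets below $\beth_\gamma$ can be carried out entirely in $V$.

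With this in hand, the final step is a standard coding argument: in $V$, every $x\in V_\gamma$ has $\trcl(\{x\})$ of cardinality at most some $\mu<\beth_\gamma$, and can be encoded by a well-founded extensional binary relation on $\mu$, i.e.\ by a subset of $\mu$ via any fixed pairing bijection. The code lies in $\Ult(V,\dot{\ff}(\SS))$ by Theorem~\ref{thm:strongness_splitting}, and $x$ is recovered from the code by the Mostowski collapse performed inside $\Ult(V,\dot{\ff}(\SS))$. Thus $V_\gamma^{V^\SS}\subseteq\Ult(V,\dot{\ff}(\SS))$ is equivalent to $\PPP^{V^\SS}(\mu)\subseteq\Ult(V,\dot{\ff}(\SS))$ for all $\mu<\beth_\gamma$, and the theorem supplies the two equivalences with ${<}\beth_\gamma$-splitting.

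The main delicate point will be to align the various computations of $\beth_\gamma$ between $V$, $V^\SS$ and $\Ult(V,\dot{\ff}(\SS))$: the ${<}\beth_\gamma$-closure of $\SS$ must actually keep $V_\gamma$ unchanged (so that the $V$-coding is admissible), and the Mostowski collapse decoding must be a genuine operation inside $\Ult(V,\dot{\ff}(\SS))$ (so that the decoded set really lies there). Both are handled by the induction above combined with the absoluteness of the Mostowski collapse, but their interplay is the part of the proof requiring care.
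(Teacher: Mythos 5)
Your proof is correct and follows essentially the same route as the paper's: instantiate Theorem~\ref{thm:strongness_splitting} at $\beth_\gamma$ and translate between $V_\gamma$ and the powersets of cardinals below $\beth_\gamma$ by coding each $x \in V_\gamma$ as a well-founded extensional relation on $\vp{\trcl(x)} \leq \beth_\alpha$, which is exactly the paper's one-line argument. The only difference is that you make explicit, via the ${<}\beth_\gamma$-closure of $\ap{\SS^+/{=_\SS}, \leq_\SS}$, the identifications $V_\gamma^{V^\SS} = V_\gamma^V$ and $\PPP^{V^\SS}(\mu) = \PPP^V(\mu)$ needed to align the ambient model of the strongness condition with the ground-model coding; the paper leaves this point implicit, so this is added care rather than a genuinely different argument.
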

\begin{proof}
	If follows by Theorem \ref{thm:strongness_splitting}, together with the observation that in every $\ZFC$ model there is a bijection between elements of $V_\gamma$ and subsets of $\beth_\alpha$ for $\alpha < \gamma$. Such bijection codes $x \in V_\gamma$ as the transitive collapse of a relation on $\vp{\trcl(x)} \leq \beth_\alpha$ for some $\alpha < \gamma$, which in turn is coded by a subset of $\beth_\alpha$.
\end{proof}

\subsubsection{Closure}

In this section we cover large cardinal properties defined in terms of the existence of elementary embeddings $j : V \to M \subseteq V[G]$ with certain degree of closure (i.e. such that ${}^{{<}\gamma} M \subseteq M$ for some appropriate $\gamma$). Main examples of such properties are supercompactness, hugeness and variants of them. We now present a criterion to characterize ${<}\gamma$-closure for an elementary embedding $j_{\dot{\ff}(\SS)}$, which can in turn be applied in order to characterize all of the aforementioned large cardinal properties.

\begin{definition}
	Let $\SS$ be a $\CCC$-system of filters, $\AAA = \bp{A_\alpha : ~ \alpha < \delta}$ be an antichain in $\SS^+$. We say that $\AAA$ \emph{is guessed by} $\SS$ iff there exist a $b \in \CCC$ and $\BBB = \bp{B_\alpha : ~ \alpha < \delta}$ antichain in $\PPP(O_b)$ such that $A_\alpha =_\SS B_\alpha$ for all $\alpha < \delta$.

	We say that a family of antichains $\ap{\AAA_\alpha : ~ \alpha < \mu}$ is \emph{simultaneously} guessed by $\SS$ iff there is a single $b \in \CCC$ witnessing guessing for all of them.
\end{definition}

\begin{definition}
	Let $\SS$ be a $\CCC$-system of filters. We say that $\SS$ is ${<}\gamma$-\emph{guessing} iff for all sequences $\ap{\AAA_\alpha : ~ \alpha < \mu}$ of maximal antichains with $\mu < \gamma$, there are densely many $B \in \SS^+$ such that the antichains $\AAA_\alpha \res B$ for $\alpha < \mu$ are simultaneously guessed by $\SS$ .
\end{definition}

Notice that if an antichain is guessed by $\SS$, every partition of it is split by $\SS$. It follows that ${<}\gamma$-guessing implies ${<}\gamma$-splitting. Furthermore, if $\TT$ is a tower of inaccessible length $\lambda$, ${<}\lambda$-guessing as defined above is equivalent to ${<}\lambda$-presaturation for the boolean algebra $\ap{\TT^+, \leq_\TT}$.

\begin{theorem}
	Let $\lambda$ be an inaccessible cardinal, $\SS$ be a ${<}\lambda$-directed $\CCC$-system of filters of length $\lambda$, $\gamma < \lambda$ be a cardinal. Then the ultrapower $\Ult(V, \dot{\ff}(\SS))$ is ${<}\gamma$-closed iff $\SS$ is ${<}\gamma$-guessing.
\end{theorem}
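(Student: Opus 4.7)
The plan is to adapt the argument of Theorem~\ref{thm:strongness_splitting}, replacing the two-block partitions that coded subsets of $\mu$ there with arbitrary guessed antichains coding the entries of a $\mu$-sequence.

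For the forward direction I would fix $\mu<\gamma$ and a $\SS$-name $\dot s=\ap{\dot x_\alpha:\alpha<\mu}$ for a $\mu$-sequence of elements of $M=\Ult(V,\dot{\ff}(\SS))$, then for each $\alpha<\mu$ select a maximal antichain $\AAA_\alpha$ whose elements $A$ decide $\dot x_\alpha=[\check u^A_\alpha]_{\dot{\ff}(\SS)}$ for some $u^A_\alpha:O_{a^A_\alpha}\to V$ in $V$. Applying ${<}\gamma$-guessing yields a condition $B\in\SS^+$ in the generic filter and a common $b\in\CCC$ together with antichains $\bp{B^A_\alpha:A\in\AAA_\alpha\res B}\subseteq\PPP(O_b)$ satisfying $B^A_\alpha=_\SS A$. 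Using ${<}\lambda$-directedness I would enlarge $b$ so that $\mu\subseteq b$ and $a^A_\alpha\subseteq b$ for every $A$ in the restricted antichains; such enlargement preserves guessing by pulling back the $B^A_\alpha$ along the standard projection. Then define $w:O_b\to V$ in $V$ by declaring $w(f)$ to be the function of domain $f(\mu)$ with $w(f)(f(\alpha))=u^{A(f,\alpha)}_\alpha(f\res a^{A(f,\alpha)}_\alpha)$, where $A(f,\alpha)$ denotes the unique $A\in\AAA_\alpha\res B$ with $f\in B^A_\alpha$. Proposition~\ref{prop:rappcanonici} (giving $\mu=[\proj_\mu]_{\dot{\ff}(\SS)}$ and $\alpha=[\proj_\alpha]_{\dot{\ff}(\SS)}$) together with \L o\'s's theorem then yields $\dom([w]_{\dot{\ff}(\SS)})=\mu$ and $[w]_{\dot{\ff}(\SS)}(\alpha)=\dot x_\alpha$ for every $\alpha<\mu$, whence $\dot s\in M$ below $B$.

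For the reverse direction, given maximal antichains $\ap{\AAA_\alpha:\alpha<\mu}$ with $\mu<\gamma$ and a condition $B_0\in\SS^+$, I would produce $B\leq_\SS B_0$ witnessing simultaneous guessing. In $V^\SS$ below $B_0$, let $A_\alpha$ denote the unique element of $\AAA_\alpha$ in the generic filter; the $\mu$-sequence $\sigma=\ap{j(A_\alpha):\alpha<\mu}$ consists of elements of $M$, since $j(A_\alpha)=[c_{A_\alpha}]_{\dot{\ff}(\SS)}\in M$, and hence lies in $M$ by ${<}\gamma$-closure. Pick $b\in\CCC$, $v:O_b\to V$ in $V$, and $B\leq_\SS B_0$ with $B\Vdash[\check v]_{\dot{\ff}(\SS)}=\dot\sigma$, and enlarge $b$ to contain $\mu$. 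Setting $B^A_\alpha=\bp{f\in O_b:v(f)(f(\alpha))=A}$ for each $A\in\AAA_\alpha\res B$ produces pairwise disjoint subsets of $O_b$, and a direct \L o\'s computation shows $A\wedge B=_\SS B^A_\alpha\wedge B$: below $A$ one has $A_\alpha=A$, hence $[\check v]_{\dot{\ff}(\SS)}(\alpha)=j(A)$, forcing $v(f)(f(\alpha))=A$ on a conull set; conversely, below $B^A_\alpha\wedge B$ \L o\'s's theorem gives $j(A)=j(A_\alpha)$, and elementarity yields $A=A_\alpha$.

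The principal obstacle lies in the forward direction: the common $b$ must simultaneously absorb the base domains $a^A_\alpha$ of all representatives $u^A_\alpha$ appearing in the restricted antichains, a collection whose cardinality is a priori unbounded in $\SS^+$. Controlling $\bigcup\bp{a^A_\alpha:A\in\AAA_\alpha\res B,\,\alpha<\mu}$ within a single $\CCC$-domain requires a preliminary refinement of $B$ leveraging the inaccessibility of $\lambda$, the ${<}\lambda$-directedness of $\CCC$, and the presaturation-type consequences of ${<}\gamma$-guessing that bound the sizes of the restricted antichains below $\lambda$. Once this is achieved, the remaining verifications reduce to routine applications of \L o\'s's theorem and the canonical representatives of Proposition~\ref{prop:rappcanonici}, closely paralleling those in the proof of Theorem~\ref{thm:strongness_splitting}.
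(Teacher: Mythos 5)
Your proposal follows essentially the same route as the paper in both directions: antichains deciding the entries of the sequence, guessing plus ${<}\lambda$-directedness to assemble a single representing function in the forward direction, and coding the antichains into a name for a $\mu$-sequence and reading the guessing sets off a representative in the reverse direction. The one step you flag as an obstacle dissolves exactly as you suspect: since the guessed antichain $\bp{B^A_\alpha : A \in \AAA_\alpha \res B}$ consists of pairwise disjoint \emph{nonempty} subsets of $O_b$ (an empty guess would force $A =_\SS \emptyset$, contradicting $A \in \SS^+$), each restricted antichain has at most $\vp{O_b} < \lambda$ members by inaccessibility of $\lambda$, so only $<\lambda$ many domains $a^A_\alpha$ occur and ${<}\lambda$-directedness absorbs them into a single $c \in \CCC$ with no further refinement of $B$ needed.
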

\begin{proof}
	Let $a \in \CCC$, $u_\alpha : O_a \to \ON$ be such that $\Qp{ [\check{u}_\alpha]_{\dot{\ff}(\SS)} = \check{\alpha} }_\SS = \1$ for all $\alpha < \gamma$.
	First, suppose that $\SS$ is ${<}\gamma$-guessing and let $\dot{s}$ be a name for a sequence $\dot{s} : \mu \to \Ult(V, \dot{\ff}(\SS))$ for some $\mu < \gamma$. Let $\AAA_\alpha$ for $\alpha < \mu$ be a maximal antichain deciding the value of $\dot{s}(\check{\alpha})$, so that given any $A \in \AAA_\alpha$, $A \Vdash \dot{s}(\check{\alpha}) = [\check{v}_A]_{\dot{\ff}(\SS)}$ for some $v_A : O_{a_A} \to V$. Let $\SSS$ be generic for $\SS$. Then by ${<}\gamma$-guessing there is a $B \in \SSS$ such that $\AAA_\alpha \res B$ is guessed by $\SS$ in $\BBB_\alpha \subseteq \PPP(O_b)$ for all $\alpha < \mu$. Furthermore, there can be only $\vp{b} < \lambda$ elements $A \in \AAA_\alpha$ such that the corresponding $A' \in \BBB_\alpha$ is not empty. Since $\SS$ is ${<}\lambda$-directed, there is a single $c \in \CCC$, $c \supseteq a,b$, such that $a_A \subseteq c$ for any $A \in \AAA_\alpha$ that is guessed in an $A' \neq \emptyset$.

	Let $B_\alpha^f$ denote the unique element of $\BBB_\alpha$ such that $\pi_{b}(f) \in B_\alpha^f$, and $v_\alpha^f : O_{a_\alpha^x} \to V$ be such that $B_\alpha^f \Vdash \dot{s}(\check{\alpha}) = [\check{v}^f_\alpha]_{\dot{\ff}(\SS)}$. Then for any $\alpha < \mu$ we can define
	\[
	\begin{array}{rrcl}
		v'_\alpha: & O_c & \longrightarrow  & V \\
		   & f & \longmapsto & v^f_\alpha(\pi_{a_\alpha^f}(f))
	\end{array}
	\]
	so that $B \Vdash [\check{s}]_{\dot{\ff}(\SS)}(\check{\alpha}) = [\check{v}'_\alpha]_{\dot{\ff}(\SS)}$. Since all the $v'_\alpha$ have the same domain $O_c$, we can glue them together forming a single function
	\[
	\begin{array}{rrcl}
		v': & O_c & \longrightarrow  & V \\
		   & f & \longmapsto & \bp{\ap{u_i(\pi_a(f)), v'_\alpha(f)} : ~ \alpha < u_\mu(\pi_a(f))}
	\end{array}
	\]
	Then $B$ forces that $[v']_{\dot{\ff}(\SS)} = \dot{s}$, and $B \in \SSS$ so $\val(\dot{s}, \SSS) = [v']_\SSS$ is in $\Ult(V, \SSS)$.

	Suppose now that $\SS$ is ${<}\gamma$-closed and let $\ap{\AAA_\alpha : ~ \alpha < \mu}$, $\AAA_\alpha = \ap{A_{\alpha\beta} : ~ \beta < \xi_\alpha}$ be as in the definition of ${<}\gamma$-guessing. Let $\dot{f}$ be a name for a sequence $\dot{s} : j_{\dot{\ff}(\SS)}[\mu] \to \ON$ such that $\Qp{\dot{s}(j_{\dot{\ff}(\SS)}(\check{\alpha})) = j_{\dot{\ff}(\SS)}(\check{\beta})}_\SS = [A_{\alpha\beta}]_\SS$. Since $j_{\dot{\ff}(\SS)}$ is ${<}\gamma$-closed, we can find densely many $B \subseteq O_b$ for $b \in \CCC$, $v : B \to \ON^\mu$ such that $B \Vdash \dot{s} = [\check{v}]_{\dot{\ff}(\SS)}$. Then given any $\alpha < \mu$, $\beta < \xi_i$ we can define $B_{\alpha\beta} = \bp{f \in B : ~ v(f)(\alpha) = \beta}$ witnessing guessing for $\ap{\AAA_\alpha \res B : ~ \alpha < \mu}$.
\end{proof}

The above result gives a good characterization of ${<}\gamma$-closure for ideal towers, since ideal towers $\TT$ of inaccessible height $\lambda$ are always ${<}\lambda$-directed. On the other hand, this result does not apply to ideal extenders since their associated system of domains is never ${<}\omega_1$-directed. Since it is not known whether there is such a characterization of ${<}\gamma$-closure for extenders in $V$, we cannot expect to have one in the more general case of ideal extenders.

We can also determine an upper bound to the amount of closure that a system of filters might possibly have.

\begin{definition}
	Let $\SS$ be a $\CCC$-system of filters of length $\lambda$, $\alpha < \lambda$ be an ordinal. We say that $\SS \res \alpha$ \emph{does not express} $\SS$ iff $\1 \Vdash_\SS M \supsetneq M_\alpha$ where $M = \Ult(V, \dot{\ff}(\SS))$, $M_\alpha = \Ult(V, \dot{\ff}(\SS \res \alpha))$.
\end{definition}

Notice that $\1 \Vdash_\SS M \supsetneq M_\alpha$ is equivalent to $\1 \Vdash_\SS M \supsetneq k_\alpha[M_\alpha]$. In fact, $M = M_\alpha$ implies that $k_\alpha = \id \res M$ by Kunen's inconsistency, and $M = k_\alpha[M_\alpha]$ implies that $k_\alpha$ has no critical point thus is the identity. 

\begin{theorem} \label{thm:not_closure}
	Let $\SS$ be a $\CCC$-system of filters of length $\lambda$ such that $\SS \res \alpha$ does not express $\SS$ for any $\alpha < \lambda$. Then $M = \Ult(V, \dot{\ff}(\SS))$ is not closed under $\cof(\lambda)$-sequences.
\end{theorem}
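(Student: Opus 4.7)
The plan is to produce explicitly a $\cof(\lambda)$-sequence of elements of $M$ that fails to lie in $M$, leveraging the directed-limit presentation $M = \bigcup_{\alpha<\lambda} k_\alpha[M_\alpha]$ together with the failure of expression at every level.

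I work in a generic extension $V[\SSS]$ with $\SSS$ generic for $\SS$, set $\mu = \cof(\lambda)$, and fix a cofinal sequence $\ap{\alpha_\xi : \xi < \mu}$ in $\lambda$ satisfying the extra bookkeeping condition $\alpha_\xi > \xi$ (possible since $\mu \leq \lambda$). For each $\xi < \mu$, the hypothesis that $\SS \res \alpha_\xi$ does not express $\SS$, combined with the remark preceding the statement, gives $k_{\alpha_\xi}[M_{\alpha_\xi}] \subsetneq M$; I pick $x_\xi \in M \setminus k_{\alpha_\xi}[M_{\alpha_\xi}]$ and form $s = \ap{x_\xi : \xi < \mu}$. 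The goal is then to show $s \notin M$.

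I will argue by contradiction. Suppose $s \in M$. Since $M$ is the directed limit of $\ap{M_\alpha : \alpha < \lambda}$ (with the factor maps $k_\alpha$ factoring through one another), some $\alpha < \lambda$ satisfies $s \in k_\alpha[M_\alpha]$, and by cofinality I pick $\xi_0 < \mu$ with $\alpha_{\xi_0} > \alpha$, whence $s \in k_{\alpha_{\xi_0}}[M_{\alpha_{\xi_0}}]$: write $s = k_{\alpha_{\xi_0}}(\bar{s})$ with $\bar{s} \in M_{\alpha_{\xi_0}}$. Since $\crit(k_{\alpha_{\xi_0}}) \geq \alpha_{\xi_0} > \xi_0$, the map $k_{\alpha_{\xi_0}}$ fixes $\xi_0$, and elementarity delivers $x_{\xi_0} = s(\xi_0) = k_{\alpha_{\xi_0}}(\bar{s}(\xi_0)) \in k_{\alpha_{\xi_0}}[M_{\alpha_{\xi_0}}]$, contradicting the choice of $x_{\xi_0}$.

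The main subtlety is the bookkeeping requirement $\alpha_\xi > \xi$: it is exactly what guarantees that at stage $\xi_0$ the index $\xi_0$ is a fixed point of $k_{\alpha_{\xi_0}}$, so that the value $s(\xi_0)$ can be pulled back into $k_{\alpha_{\xi_0}}[M_{\alpha_{\xi_0}}]$. Apart from this, the argument is a direct application of the directed-limit machinery developed in Section \ref{ssec:ultrapowers}, takes place entirely in $V[\SSS]$, and thus uses $\cof(\lambda)$ as computed there.
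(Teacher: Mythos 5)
Your argument is correct and is essentially the paper's own proof: both diagonalize against the direct-limit presentation $M=\bigcup_{\alpha<\lambda}k_\alpha[M_\alpha]$ by picking, along a cofinal sequence, witnesses $x_\xi\in M\setminus k_{\alpha_\xi}[M_{\alpha_\xi}]$ and observing that any element of $M$ representing the resulting sequence already lives at some bounded level, which then swallows one of the witnesses. The only (harmless) implementation difference is that the paper works with names in $V$, indexes the sequence by $j_{\dot{\ff}(\SS)}[\gamma]$, and extracts the offending coordinate via \L o\'s's theorem applied to the representing function, whereas you work in $V[\SSS]$, index by $\gamma$ itself, and instead use $\crit(k_{\alpha_{\xi_0}})\geq\alpha_{\xi_0}>\xi_0$ together with elementarity of $k_{\alpha_{\xi_0}}$ to pull the coordinate back --- the same point handled with different bookkeeping.
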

\begin{proof}
	Let $\ap{\xi_\alpha : ~ \alpha < \gamma = \cof(\lambda)}$ be a cofinal sequence in $\lambda$. For all $\alpha < \gamma$, let $\dot{u}_\alpha$ be a name for an element of $M \setminus k_{\xi_\alpha}[M_{\xi_\alpha}]$, $M_{\xi_\alpha} = \Ult(V, \dot{\ff}(\SS \res \xi_\alpha))$. Let $\dot{s}$ be a name such that 	
	\[
	\Qp{\dot{s}(j_{\dot{\ff}(\SS)}(\alpha))=[\dot{u}_\alpha]_{\dot{\ff}(\SS)}}_\SS = \1
	\]
	for all $\alpha < \gamma$, i.e. $\dot{s}$ is a name for a $\gamma$-sequence of elements of $M$ indexed by $j_{\dot{\ff}(\SS)}[\gamma]$.

	Suppose by contradiction that $M$ is closed under $\gamma$-sequences, so that $\1 \Vdash_\SS \dot{s} \in \dot{M}$, then there is an $A \in \SS^+$, $v : A \to V$ such that $A \Vdash_\SS \dot{s} = [\check{v}]_{\dot{\ff}(\SS)}$. Let $\bar{\alpha} < \gamma$ be such that $A \in \cp{\SS \res \xi_{\bar{\alpha}}}^+$. Let $v' : A \to V$ be such that $v'(f) = v(f)(\bar{\alpha})$ if $\bar{\alpha} \in \dom(f)$. By fineness $A \Vdash_\SS [v']_{\dot{\ff}(\SS)} = [\dot{u}_{\bar{\alpha}}]_{\dot{\ff}(\SS)}$ for all $\alpha<\gamma$. Thus $A\Vdash_\SS [\dot{u}_{\bar{\alpha}}]_{\dot{\ff}(\SS)} \in k_{\xi_{\bar{\alpha}}}[M_{\xi_{\bar{\alpha}}}]$, a contradiction.
\end{proof}

The situation described in the previous theorem occurs in several cases, as shown by the following proposition.

\begin{proposition}
	Let $\TT$ be an ideal tower of height $\lambda$ limit ordinal, $\alpha < \lambda$ be an ordinal. Then $\TT \res \alpha$ does not express $\TT$.
\end{proposition}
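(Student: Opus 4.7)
The plan is to show that with boolean value $\1$ there is an element of $M = \Ult(V, \dot{\ff}(\TT))$ not in $k_\alpha[M_\alpha]$, where $M_\alpha = \Ult(V, \dot{\ff}(\TT \res \alpha))$; by the remark following the definition of ``expressing'' this is precisely what is required. My candidate is $j[V_\alpha]$, which lies in $M$ because $V_\alpha \in \CCC = V_\lambda$ is a domain of $\TT$ and $\dom_{V_\alpha}$ represents $j[V_\alpha]$ by Proposition~\ref{prop:rappcanonici}.

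First I would reduce to the case of $\alpha$ a limit ordinal. Given any $\alpha < \lambda$, pick a limit $\alpha'$ with $\alpha \leq \alpha' < \lambda$. Then $k_\alpha$ factors as $k_{\alpha'} \circ k_{\alpha\alpha'}$, where $k_{\alpha\alpha'} : M_\alpha \to M_{\alpha'}$ is defined by $[u]_{\TT\res\alpha} \mapsto [u]_{\TT\res\alpha'}$, so $k_\alpha[M_\alpha] \subseteq k_{\alpha'}[M_{\alpha'}]$ and it suffices to handle $\alpha'$. Now assume $\alpha$ is itself a limit ordinal. Apply Proposition~\ref{prop:tallest_tower} to the generic tower of $V$-ultrafilters $\dot{\ff}(\TT \res \alpha)$ of limit length $\alpha$ to conclude $j_\alpha[V_\alpha] \notin M_\alpha$ with boolean value $\1$, where $j_\alpha = j_{\dot{\ff}(\TT\res\alpha)}$. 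Suppose towards a contradiction $j[V_\alpha] = k_\alpha(y)$ for some $y \in M_\alpha$. Using $k_\alpha \circ j_\alpha = j$ together with the elementarity and injectivity of $k_\alpha$, a routine double inclusion (the forward direction: $k_\alpha(j_\alpha(x)) = j(x) \in k_\alpha(y)$ forces $j_\alpha(x) \in y$; the reverse direction: each $z \in y$ satisfies $k_\alpha(z) = j(x) = k_\alpha(j_\alpha(x))$ for some $x \in V_\alpha$, whence $z = j_\alpha(x)$) yields $y = j_\alpha[V_\alpha] \in M_\alpha$, contradicting the previous sentence.

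The main obstacle is the applicability of Proposition~\ref{prop:tallest_tower} to $\TT \res \alpha$: its proof picks an $a \in V_\alpha$ and produces a set $A$ in some $F_{V_{\gamma+1}}$ with $\gamma + 1 < \alpha$, then derives a contradiction from $|\dom[A]| < |V_{\gamma+1}|$ via Lemma~\ref{lem:small_ns} and Proposition~\ref{prop:largeness}. For the latter to furnish the required stationarity, $V_{\gamma+1}$ must be uncountable, which forces $\alpha$ to be a limit ordinal above $\omega$; the reduction step covers every non-limit $\alpha$, while the peripheral regime $\lambda \leq \omega$ is outside the natural scope of the statement since every $F_a$ with $a$ hereditarily finite is necessarily principal.
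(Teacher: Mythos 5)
Your route is genuinely different from the paper's. The paper exhibits $[\id_{V_{\alpha+1}}]_{\dot{\ff}(\TT)}$ as an element of $M$ that cannot equal $[u]_{\dot{\ff}(\TT)}$ for any $u$ defined on $O_a$ with $a \in V_\alpha$, by re-running the cardinality-versus-stationarity computation of Proposition \ref{prop:tallest_tower} at the level $V_{\alpha+1}$, which is still a domain of $\TT$ because $\alpha+1<\lambda$. You instead take $j[V_\alpha]$ as the witness, invoke Proposition \ref{prop:tallest_tower} as a black box for the restricted generic tower, and pull the contradiction back through $k_\alpha$ by extensionality. That pullback (if $k_\alpha(y)=j[V_\alpha]$ then the extension of $y$ in $M_\alpha$ is exactly $j_\alpha[V_\alpha]$) is correct, and for $\alpha$ a limit ordinal above $\omega$ your argument works at the same level of rigour as the paper's.

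The gap is in the reduction step. You need a limit ordinal $\alpha'$ with $\alpha \le \alpha' < \lambda$, and in fact you need $\alpha' > \omega$: for $\alpha' = \omega$ the restricted system lives on hereditarily finite domains, each generic $F_a$ is a principal ultrafilter on a finite algebra, $M_\omega \cong V$ with $j_\omega$ the identity, and $j_\omega[V_\omega] = V_\omega \in M_\omega$ --- so the statement you are importing from Proposition \ref{prop:tallest_tower} is false at length $\omega$. But a limit ordinal $\alpha' \in (\omega,\lambda)$ above $\alpha$ need not exist, since $\lambda$ is only assumed to be a limit ordinal, not a limit of limit ordinals: for $\lambda = \omega+\omega$ the only limit ordinal below $\lambda$ is $\omega$, so no $\alpha \geq 1$ is covered, and more generally for $\lambda = \mu+\omega$ with $\mu$ a limit every $\alpha \in (\mu,\lambda)$ escapes the reduction. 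Your claim that ``the reduction step covers every non-limit $\alpha$'' is therefore not correct. Nor can the direct argument be run at a successor $\alpha=\beta+1$: for a tower of length $\beta+1$ the domain $a$ may be $V_\beta$ itself, so $\vp{O_a}$ can reach $\vp{V_{\beta+1}}$ and the counting behind Proposition \ref{prop:tallest_tower} collapses; indeed $j_{\beta+1}[V_{\beta+1}]$ may lie in the corresponding ultrapower. The paper's choice of witness, $(j\res V_{\alpha+1})^{-1}$ rather than $j[V_\alpha]$, is precisely what sidesteps this: it pushes the counting one level up to $V_{\alpha+1}$, which is always available inside $\CCC = V_\lambda$. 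Replacing your witness by $j[V_{\alpha+1}]$ and arguing directly would repair the proof, but at that point you are essentially reproducing the paper's argument.
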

\begin{proof}
	Suppose by contradiction that there is an $A \in \TT^+$ such that $A \Vdash_\TT M  \supseteq k_\alpha[M_\alpha]$. Then in particular $A \Vdash_\TT [\id_{V_{\alpha+1}}]_{\dot{\ff}(\TT)} \in k_\alpha[M_\alpha]$ hence let $A' \in \TT^+$, $A' \leq A$ be such that $A' \Vdash_\TT [\id_{V_{\alpha+1}}]_{\dot{\ff}(\TT)} = [u]_{\dot{\ff}(\TT)}$ for some $u : O_a \to V$, $a \in \CCC \res \alpha$. Thus by \L o\'s Theorem,
	\[
	B = \bp{f \in O_{V_{\alpha+1}} : ~ f = u(\pi_a(f))} \in \TT^+
	\]
	Since $\vp{B} \leq \vp{\ran(u)} \leq \vp{O_a} \leq \vp{\beth_\alpha} < \vp{V_{\alpha+1}}$, $\dom[B]$ is a non-stationary subset of $V_{\alpha+1}$ by Lemma \ref{lem:small_ns} contradicting Proposition \ref{prop:largeness}.
\end{proof}

\begin{lemma} \label{lem:max_push}
	Let $\SS$ be a $\CCC$-system of filters of length $\lambda$, $\gamma$ be a cardinal such that $\vp{a} \leq \gamma$ for all $a \in \CCC$. Then $j_{\dot{\ff}(\SS)}(\gamma) < \cp{(\beth_\lambda \cdot 2^\gamma)^+}^V$.
\end{lemma}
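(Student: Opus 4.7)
The approach is a direct cardinality count in $V$, enabled by \L o\'s's Theorem~\ref{thm:los}. First, applying it to the formula ``$x < \check{\gamma}$'' together with the identity $j_{\dot{\ff}(\SS)}(\gamma) = [c_\gamma]_{\dot{\ff}(\SS)}$, every ordinal $\alpha < j_{\dot{\ff}(\SS)}(\gamma)$ can be represented as $[u]_{\dot{\ff}(\SS)}$ for some $u : O_a \to V$ in $V$ with $a \in \CCC$; after modifying $u$ on the $F_a$-null set where $u(f) \geq \gamma$ (which leaves the equivalence class unchanged), we may take $u : O_a \to \gamma$.  This produces, in $V[\dot{\ff}(\SS)]$, a surjection from the $V$-set $U := \bigcup_{a \in \CCC} {}^{O_a}\gamma$ onto $j_{\dot{\ff}(\SS)}(\gamma)$.

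Next I would bound $|U|$ in $V$ by $\beth_\lambda \cdot 2^\gamma$.  Since $|\CCC| \leq |V_\lambda|^V = \beth_\lambda$, it suffices to bound the number of $u : O_a \to \gamma$ for each fixed $a \in \CCC$.  Each $f \in O_a$ has $|\dom(f)| \leq |a| \leq \gamma$ and values in $V_{\rank(a)} \subseteq V_\lambda$, so $u$ is a set of $|O_a|$ ordered pairs from $O_a \times \gamma \subseteq V_\lambda \times \gamma$.  A case split handles the count: when $\gamma < \lambda$, the rank of $u$ stays below $\lambda$ and hence $u \in V_\lambda$, giving at most $\beth_\lambda$ possibilities per $a$; when $\gamma \geq \lambda$, the set $O_a \times \gamma$ has cardinality at most $\gamma$ and $u$ is a subset of it of size $\leq \gamma$, giving at most $2^\gamma$ possibilities per $a$.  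Summing over $a$ yields $|U|^V \leq \beth_\lambda \cdot 2^\gamma$.

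Combining the surjection with this cardinality bound gives $|j_{\dot{\ff}(\SS)}(\gamma)|^{V[\dot{\ff}(\SS)]} \leq \beth_\lambda \cdot 2^\gamma$, from which the ordinal inequality $j_{\dot{\ff}(\SS)}(\gamma) < ((\beth_\lambda \cdot 2^\gamma)^+)^V$ follows.  The main technical obstacle is the cardinal-arithmetic count, especially in the intermediate case $\lambda \leq \gamma < \beth_\lambda$: the naive estimate $\sum_a \gamma^{|O_a|}$ can exceed $\beth_\lambda \cdot 2^\gamma$, so the refined rank-and-size analysis above (rather than a direct function count) is essential to recover the claimed bound.
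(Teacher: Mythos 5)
Your reduction of the lemma to a counting problem in $V$ is correct: every ordinal below $j_{\dot{\ff}(\SS)}(\gamma)$ is of the form $[u]_{\dot{\ff}(\SS)}$ with $u \in {}^{O_a}\gamma \cap V$ (after the null-set modification you describe), so a surjection from $U=\bigcup_{a\in\CCC}{}^{O_a}\gamma$ onto $j_{\dot{\ff}(\SS)}(\gamma)$ exists in the extension and it suffices to bound $\vp{U}^V$. This is a genuinely different (and structurally cleaner) route than the paper's, which works with a $\delta$-sized family of functions and, for each one, collects the at most $\beth_\lambda$ possible ordinal values of its class across a maximal antichain deciding it, producing a single set $X\in V$ of size $\delta$ with $j_{\dot{\ff}(\SS)}(\gamma)\subseteq X$. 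The gap is in your cardinality count, case (b). The claim that $\vp{O_a\times\gamma}\leq\gamma$ when $\gamma\geq\lambda$ does not follow from $\vp{a}\leq\gamma$ and is false in general: the size of $O_a$ is governed by the number of extensional $M\subseteq\trcl(a)$, hence by $\vp{\trcl(a)}$, which the hypothesis does not control. For instance, if $P=\PPP(\mu)$ and $\bp{P}\in\CCC$, the extensional sets $M=\bp{P}\cup A\cup\mu$ for $A\subseteq\PPP(\mu)$ yield $2^{2^\mu}$ distinct values of $\pi_M(P)$, so $\vp{O_{\bp{P}}}\geq 2^{2^\mu}$ even though $\vp{\bp{P}}=1$; choosing $\lambda\leq\gamma<2^{2^\mu}$ breaks both the claim and the per-domain bound of $2^\gamma$ that you derive from it. (It does happen to hold for the two canonical examples, extenders and towers, but the lemma is stated for arbitrary $\CCC$.)

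The gap is repairable, and the repair shows that the worry in your last sentence is unfounded: the naive estimate does work. Since $\trcl(a)\subseteq V_{\rank(a)}$ one has $\vp{O_a}\leq 2^{\vp{\trcl(a)}}\leq\beth_{\rank(a)+1}$, hence
\[
\vp{{}^{O_a}\gamma}\;\leq\;2^{\gamma\cdot\vp{O_a}}\;=\;\max\cp{2^{\gamma},\,2^{\vp{O_a}}}\;\leq\;\max\cp{2^{\gamma},\,\beth_{\rank(a)+2}}\;\leq\;2^{\gamma}\cdot\beth_\lambda
\]
using $\rank(a)+2\leq\lambda$, which holds in the relevant case that $\lambda$ is a limit ordinal (your case (a) also implicitly needs this to place $u$ in $V_\lambda$). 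Summing over the at most $\beth_\lambda$ domains gives $\vp{U}\leq\beth_\lambda\cdot 2^\gamma$ with no case split, and the rest of your argument then goes through. A side effect worth noting: the corrected count never uses the hypothesis $\vp{a}\leq\gamma$, whereas the paper's proof uses it to keep its family of representing functions down to $2^\gamma$ per domain before invoking the antichain bound.
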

\begin{proof}
	Consider the set $U$ of functions $u : a \to \gamma$ for some $a \in \CCC$. The total number of such functions is bounded by
	\[
	\vp{\CCC} \cdot \gamma^{\sup_{a \in \CCC} \vp{a}} \leq \beth_\lambda \cdot \gamma^\gamma = \beth_\lambda \cdot 2^\gamma = \delta
	\]
	Let $U = \ap{u_\alpha : ~ \alpha < \delta}$, $\AAA_\alpha$ be the maximal antichain in $\SS^+$ deciding the value of $[u_\alpha]_{\dot{\ff}(\SS)}$, and let $X_\alpha = \bp{\xi : ~ \exists A \in \AAA_\alpha ~ A \Vdash [\check{u}_\alpha]_{\dot{\ff}(\SS)} = \check{\xi}}$, $X = \bigcup_{\alpha < \delta} X_\alpha$. Since $\vp{X_\alpha} \leq \vp{\AAA_\alpha} \leq \beth_\lambda$, we have that $\vp{X} \leq \delta \cdot \beth_\lambda = \delta$.
	Let now $\dot{v}$ be such that $[\dot{v}]_{\dot{\ff}(\SS)} < j_{\dot{\ff}(\SS)}(\gamma)$. Then there is a dense set of $A \in \SS^+$ such that $A \Vdash [\dot{v}]_{\dot{\ff}(\SS)} = [\check{u}_\alpha]_{\dot{\ff}(\SS)} \Rightarrow A \Vdash [\dot{v}]_{\dot{\ff}(\SS)} \in \check{X}$. Thus $j_{\dot{\ff}(\SS)}(\gamma) \subseteq X$ (actually, $j_{\dot{\ff}(\SS)}(\gamma) = X$) and $\vp{X} \leq \delta$, hence $j_{\dot{\ff}(\SS)}(\gamma) < (\delta^+)^V$.
\end{proof}

\begin{proposition}
	Let $\EE$ be a $\ap{\kappa, \lambda}$-ideal extender such that $\lambda = \beth_\lambda$. Suppose that $\1 \Vdash j_{\dot{\ff}(\EE)}(\gamma) \geq \lambda$ for some $\gamma < \lambda$. Then $\EE \res \alpha$ does not express $\EE$ for any $\alpha < \lambda$.
\end{proposition}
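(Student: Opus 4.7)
The plan is to combine Lemma \ref{lem:max_push} (applied to $\EE \res \alpha$) with the factorization $j_{\dot{\ff}(\EE)} = k_\alpha \circ j_{\dot{\ff}(\EE \res \alpha)}$ from Proposition \ref{prop:factormap}: the lemma will pin $j_\alpha(\gamma)$ strictly below $\lambda$, while the hypothesis $j(\gamma) \geq \lambda$ forces $k_\alpha$ to move $j_\alpha(\gamma)$, producing a critical point of $k_\alpha$ whose very existence witnesses $M \supsetneq k_\alpha[M_\alpha]$.

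Fix $\alpha < \lambda$ and work in an arbitrary generic extension for $\EE$. We may first replace $\gamma$ by any infinite cardinal between $\gamma$ and $\lambda$, since enlarging $\gamma$ preserves $j(\gamma) \geq \lambda$; so assume $\gamma$ is an infinite cardinal. Since $\CCC \res \alpha = [\alpha]^{{<}\omega}$ consists of finite sets, the hypothesis $\vp{a} \leq \gamma$ of Lemma \ref{lem:max_push} is trivially satisfied, and the lemma applied to $\EE \res \alpha$ (which has length $\alpha$) yields
\[
j_\alpha(\gamma) < \cp{(\beth_\alpha \cdot 2^\gamma)^+}^V.
\]
Because $\lambda = \beth_\lambda$ is a limit cardinal and $\alpha, \gamma < \lambda$, both $\beth_\alpha < \lambda$ and $2^\gamma \leq \beth_{\gamma+1} < \lambda$; hence the right-hand side above is at most $\lambda$, so $j_\alpha(\gamma) < \lambda$.

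On the other hand, $k_\alpha(j_\alpha(\gamma)) = j(\gamma) \geq \lambda > j_\alpha(\gamma)$, so $k_\alpha$ admits a critical point $\xi \leq j_\alpha(\gamma) < \lambda$. Elementarity makes $k_\alpha$ strictly increasing on ordinals, so no $\beta \in M_\alpha \cap \ON$ satisfies $k_\alpha(\beta) = \xi$: if $\beta < \xi$ then $k_\alpha(\beta) = \beta < \xi$, while if $\beta \geq \xi$ then $k_\alpha(\beta) \geq k_\alpha(\xi) > \xi$. Since $\xi < \lambda \leq j(\gamma) \in M$, transitivity of $M$ gives $\xi \in M \setminus k_\alpha[M_\alpha]$, whence $M \supsetneq k_\alpha[M_\alpha]$ in every generic extension; by the remark immediately after the definition of ``expressing'', this is equivalent to $\1 \Vdash_\EE M \supsetneq M_\alpha$, i.e.\ $\EE \res \alpha$ does not express $\EE$.

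The only delicate point is verifying that the bound $(\beth_\alpha \cdot 2^\gamma)^+$ supplied by Lemma \ref{lem:max_push} genuinely falls below $\lambda$; this is exactly where the $\beth$-fixed-point hypothesis $\lambda = \beth_\lambda$ is consumed. Once this cardinal arithmetic is in place, extracting $\xi$ as the critical point of $k_\alpha$ and recognising it as an element of $M$ outside $k_\alpha[M_\alpha]$ is automatic.
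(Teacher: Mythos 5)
Your proof is correct and follows essentially the same route as the paper's: Lemma \ref{lem:max_push} applied to $\EE \res \alpha$ pins $j_{\dot{\ff}(\EE \res \alpha)}(\gamma)$ below $\lambda$ (this is exactly where $\lambda = \beth_\lambda$ is consumed), and the identity $k_\alpha \circ j_{\dot{\ff}(\EE \res \alpha)} = j_{\dot{\ff}(\EE)}$ together with $\1 \Vdash j_{\dot{\ff}(\EE)}(\gamma) \geq \lambda$ then forces $\crit(k_\alpha) \leq j_{\dot{\ff}(\EE \res \alpha)}(\gamma)$, whence $k_\alpha[M_\alpha] \neq M$; your explicit extraction of the critical point as an element of $M \setminus k_\alpha[M_\alpha]$ only spells out what the paper leaves implicit, and your reduction to $\gamma$ an infinite cardinal is a harmless tidying-up. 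The one divergence is in justifying the hypothesis of Lemma \ref{lem:max_push}: you verify the literal condition $\vp{a} \leq \gamma$ (trivial for finite domains), whereas the paper instead verifies the support bound $\kappa_{\bp{\alpha'}} \leq \gamma$ for $\alpha' < \lambda$ (immediate from Lemma \ref{lem:supports} and $\1 \Vdash j_{\dot{\ff}(\EE)}(\gamma) \geq \lambda$), which is the condition the counting argument in that lemma actually uses when bounding the representatives $u : O_a \to \gamma$ modulo the filter; adding that one-line check would make your application of the lemma airtight.
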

\begin{proof}
	Since $j_{\dot{\ff}(\EE)}(\gamma) \geq \lambda$, $\kappa_{\bp{\alpha}} \leq \gamma$ for any $\alpha < \lambda$ hence we can apply Lemma \ref{lem:max_push} to obtain $j_{\dot{\ff}(\EE \res \alpha)}(\gamma) < \cp{(\beth_\alpha \cdot 2^\gamma)^+}^V$ which is smaller than $\lambda$ since $\alpha, \gamma < \lambda$ and $\lambda$ is a $\beth$-fixed point. It follows that the critical point of $k_\alpha : M_\alpha \to M$ is at most $j_{\dot{\ff}(\EE \res \alpha)}(\gamma)$ and in particular $k_\alpha[M_\alpha] \neq M$.
\end{proof}

We remark that the conditions of the previous proposition are often fulfilled. In particular they hold whenever $\EE$ is the $\ap{\kappa, \lambda}$-ideal extender derived from an embedding $j$ and the length $\lambda$ is a $\beth$-fixed point but not a $j$-fixed point.


\subsection{Distinction between generic large cardinal properties} \label{ssec:distinction_glcp}

Let $j : V \to M \subseteq M[G]$ be a generic elementary embedding with critical point $\kappa$. In this section we provide examples separating the following generic large cardinals notions at a successor cardinal $\kappa = \gamma^+$.
\begin{itemize}
	\item $j$ is almost superstrong if $V_{j(\kappa)}^M \prec V_{\gamma^+}^{V[G]}$;
	\item $j$ is superstrong if it is $j(\kappa)$-strong;
	\item $j$ is almost huge if it is ${<}j(\kappa)$-closed.
\end{itemize}
These examples will all be obtained by collapsing with $\CC = \Coll(\gamma, {<}\kappa)$ a suitable large cardinal embedding in $V$, so that by Theorem \ref{thm:coll_embedding} a generic large cardinal embedding $j^\CC$ is obtained with the desired properties.

\begin{proposition}
	Let $\kappa$ be a $2$-superstrong cardinal. Then there is a generic elementary embedding on $\kappa = \gamma^+$ that is almost superstrong and not superstrong.
\end{proposition}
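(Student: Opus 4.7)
The strategy is to avoid a direct lift of $j$ (which, by Proposition~\ref{prop:coll_closure_strength}, would produce a fully superstrong generic embedding), and instead to lift a strict sub-ultrapower of $j$ whose image is too small to witness superstrongness but whose factor map still delivers almost superstrongness.

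Fix a $2$-superstrong $j:V\to M$ with $\crit(j)=\kappa$ and $V_{j^2(\kappa)}\subseteq M$, and any $\gamma<\kappa$. Pick $\lambda\in(\kappa,j(\kappa))$ of singular (say, countable) cofinality. Let $\EE\in V$ be the $\ap{\kappa,\lambda}$-extender derived from $j$, a $[\lambda]^{<\omega}$-ideal extender by Proposition~\ref{prop: derived_system}. Let $j_\EE:V\to M_\EE$ be its ultrapower and $k:M_\EE\to M$ the factor map of Proposition~\ref{prop:factormap}: $k\circ j_\EE=j$, $k\res V_\lambda=\id$, $\crit(k)\geq\lambda$, and $V_\alpha\subseteq M_\EE$ for every $\alpha<\lambda$ by Proposition~\ref{prop:sys_superstrong}. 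The singularity of $\lambda$ forces $j_\EE(\kappa)>\lambda$: otherwise $k(\lambda)=k(j_\EE(\kappa))=j(\kappa)$ would send the $\omega$-cofinal $\lambda$ (whose cofinality is the same in $M_\EE$ as in $V$, since $V_\lambda\subseteq M_\EE$) to the inaccessible $j(\kappa)$ (whose cofinality is the same in $M$ as in $V$, since $V_{j^2(\kappa)}\subseteq M$), violating elementarity of $k$. Theorem~\ref{thm:coll_embedding} now lifts $j_\EE$ via $\CC=\Coll(\gamma,{<}\kappa)$ to a generic elementary embedding $\tilde j:=j_\EE^\CC:V^\CC\to M_\EE^{j_\EE(\CC)}$, definable in $V^{\CC\ast j_\EE(\CC)}$, with $\crit(\tilde j)=\kappa=(\gamma^+)^{V^\CC}$ and $\tilde j(\kappa)=j_\EE(\kappa)=(\gamma^+)^{V^{\CC\ast j_\EE(\CC)}}$.

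Failure of superstrongness: by Lemma~\ref{lem:max_push}, $M_\EE$ has too few rank-$\lambda$ sets to contain $V_{\lambda+1}^V$, so pick any $S\in V_{\lambda+1}^V\setminus M_\EE$; since $j_\EE(\CC)$ adds no ground-model sets to $M_\EE$, $S\notin M_\EE^{j_\EE(\CC)}$, while $S\in V_{\tilde j(\kappa)}^{V^{\CC\ast j_\EE(\CC)}}$, yielding the strict inclusion $V_{\tilde j(\kappa)}^{M_\EE^{j_\EE(\CC)}}\subsetneq V_{\tilde j(\kappa)}^{V^{\CC\ast j_\EE(\CC)}}$. For almost superstrongness, elementarity of $k$ gives $V_{j_\EE(\kappa)}^{M_\EE}\prec V_{j(\kappa)}^M=V_{j(\kappa)}^V$ (equality by $2$-superstrongness). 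Choosing $\lambda$ inside a club of reflecting ordinals below $j(\kappa)$ in $V$ --- abundant since $j(\kappa)$ is Mahlo in $V$ by $2$-superstrongness --- one arranges also $V_{j_\EE(\kappa)}^V\prec V_{j(\kappa)}^V$, and a Tarski--Vaught chain then yields $V_{j_\EE(\kappa)}^{M_\EE}\prec V_{j_\EE(\kappa)}^V$ in $V$. This elementarity lifts to the collapse extension by standard name-interpretation arguments, giving $V_{\tilde j(\kappa)}^{M_\EE^{j_\EE(\CC)}}\prec V_{\tilde j(\kappa)}^{V^{\CC\ast j_\EE(\CC)}}$, i.e.\ almost superstrongness in the sense of Definition~\ref{definition:glce}.

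The principal obstacle is the coordinated choice of $\lambda$ ensuring simultaneously $j_\EE(\kappa)>\lambda$ and $V_{j_\EE(\kappa)}^V\prec V_{j(\kappa)}^V$: this requires bounding $j_\EE(\kappa)$ in terms of $\lambda$ (cf.\ Lemma~\ref{lem:max_push}) and then selecting $\lambda$ from a carefully chosen stationary set of reflection ordinals below $j(\kappa)$, which is precisely where the extra strength of the $2$-superstrong hypothesis over mere superstrong is used.
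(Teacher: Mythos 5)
Your strategy --- derive a \emph{short} extender $\EE$ of length $\lambda<j(\kappa)$ so that $M_\EE$ is too thin to be superstrong, then recover almost superstrongness from the factor map $k$ --- is genuinely different from the paper's, which instead applies $j(\EEE)$ inside $M$ to obtain a \emph{second} superstrong step $V^{j(\CC)}\to N^{j^2(\CC)}$ and then views $j_0:V^\CC\to M^{j(\CC)}$ as a generic embedding in the larger extension $V^{j^2(\CC)}$: there superstrongness fails because $V^{j^2(\CC)}$ collapses $\vp{V_{\gamma+1}^{j(\CC)}}$, while almost superstrongness is exactly the elementarity $V_{j(\kappa)}^{M^{j(\CC)}}=V_{j(\kappa)}^{V^{j(\CC)}}\prec V_{j^2(\kappa)}^{V^{j^2(\CC)}}$ supplied by that second step. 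The main gap in your version is the almost-superstrongness argument. The elementarity you extract from $k:M_\EE\to M$ is elementarity \emph{of the map $k$}, which is not the inclusion on $V_{j_\EE(\kappa)}^{M_\EE}$: since $j_\EE(\kappa)>\lambda$ you have $k(j_\EE(\kappa))=j(\kappa)\neq j_\EE(\kappa)$, so $k$ moves points of rank above $\lambda$. Hence you cannot combine $k:V_{j_\EE(\kappa)}^{M_\EE}\to V_{j(\kappa)}^{V}$ with a reflection $V_{j_\EE(\kappa)}^{V}\prec V_{j(\kappa)}^{V}$ to conclude that the \emph{inclusion} $V_{j_\EE(\kappa)}^{M_\EE}\subseteq V_{j_\EE(\kappa)}^{V}$ is elementary; and it should not be, since $V_{j_\EE(\kappa)}^{M_\EE}$ omits almost all subsets of $V_\lambda$ (this is your own non-superstrongness argument). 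Moreover the definition compares $V_{\tilde{\jmath}(\kappa)}^{M}$ with $V_{\gamma^+}^{V[G]}$ where $\gamma^+$ is computed in the final extension: by Lemma \ref{lem:max_push} one has $j_\EE(\kappa)<\cp{(\beth_\lambda\cdot 2^\kappa)^+}^V$, so $j_\EE(\kappa)$ is in general not a $V$-cardinal, $(\gamma^+)^{V[G]}>\tilde{\jmath}(\kappa)$, and the target structure has strictly greater height than the one you produce. Supplying elementarity into that taller structure is precisely what the paper's second superstrong step is for.

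A secondary but genuine error is the claim that $\cf^V(\lambda)=\omega$ forces $j_\EE(\kappa)>\lambda$. Your justification --- that $V_\lambda\subseteq M_\EE$ makes $M_\EE$ compute $\cf(\lambda)=\omega$ --- fails because a cofinal $\omega$-sequence in $\lambda$ has rank $\lambda$ and so lives in $V_{\lambda+1}$, not $V_\lambda$, and a $[\lambda]^{{<}\omega}$-ultrapower need not be closed under $\omega$-sequences. The implication itself is false: $j_\EE(\kappa)$ is the order type of $H(\lambda)=\bp{j(u)(\cp{j\res a}^{-1}) : a\in[\lambda]^{{<}\omega},\ u:O_a\to\kappa}\cap j(\kappa)$, and any $\lambda$ of cofinality $\omega$ that is a closure point of $\alpha\mapsto\sup\cp{H(\alpha)}$ satisfies $j_\EE(\kappa)=\lambda$. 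This particular point is repairable (choose $\lambda$ of the form $j(u)(s)$, e.g.\ $\lambda=\kappa^{+\omega}$), but that does not address the main gap above.
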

\begin{proof}
	Let $j$ be a $2$-superstrong embedding with critical point $\kappa$, and let $\EEE$ be the $\ap{\kappa,j(\kappa)}$-extender derived from $j$. Since $V$ models that $\EEE$ is a superstrong $\ap{\kappa, j(\kappa)}$-extender, by elementarity $M$ models that $j(\EEE)$ is a superstrong $\ap{j(\kappa), j^2(\kappa)}$-extender. Thus
	\[
	\begin{array}{rrcl}
		j_{j(\EEE)}: & M & \longrightarrow & N = \Ult(M, j(\EEE)) \supseteq M_{j^2(\kappa)} = V_{j^2(\kappa)} \\
		   & j(\kappa) & \longmapsto & j^2(\kappa)
	\end{array}
	\]
	Since $M \subseteq V$, also $\Ult(V, j(\EEE)) \supseteq \Ult(M, j(\EEE)) \supseteq V_{j^2(\kappa)}$ hence $j(\kappa)$ is superstrong as witnessed by $j(\EEE)$ also in $V$.

	Consider now $\CC = \Coll(\gamma, {<}\kappa)$, $j_1$ induced by $\CC$ and $j_\EEE$, $j_2$ induced by $j(\CC)$ and $j_{j(\EEE)}$. By Proposition \ref{prop:coll_closure_strength}, $j_1$ and $j_2$ are still superstrong and we get the following diagram, where all the inclusions are superstrong:
	\begin{center}
		\begin{tikzpicture}[xscale=2,yscale=1]
				\node (J0) at (-0.5, 0) {$j_0 :$};
				\node (J1) at (0.5, -1) {$j_1 :$};
				\node (A0_0) at (0, 0) {$V^\CC$};
				\node (A1_0) at (1, 0) {$M^{j(\CC)}$};
				\node (A1_1) at (1, -1) {$V^{j(\CC)}$};
				\node (A2_1) at (2, -1) {$N^{j^2(\CC)}$};
				\node (A2_2) at (2, -2) {$V^{j^2(\CC)}$};
				\path (A0_0) edge [->] node [auto] {} (A1_0);
				\path (A1_1) edge [->] node [auto,swap] {} (A2_1);
				\path (A1_0) edge[draw=none] node [sloped] {$\subseteq$} (A1_1);
				\path (A2_1) edge[draw=none] node [sloped] {$\subseteq$} (A2_2);
		\end{tikzpicture}
	\end{center}
	Thus $j_0$ considered as a generic elementary embedding in $V^{j^2(\CC)}$ is almost superstrong:
	\[
	M_{j(\kappa)}^{j(\CC)} = V_{j(\kappa)}^{j(\CC)} \prec N_{j^2(\kappa)}^{j^2(\CC)} = V_{j^2(\kappa)}^{j^2(\CC)} = V_{\gamma^+}^{j^2(\CC)}
	\]
	but not superstrong, since $V_{\gamma+1}^{j(\CC)}$ has cardinality $\delta \in (\gamma, j^2(\kappa))$ in $V^{j(\CC)}$ hence in $V^{j^2(\CC)}$ its cardinality is collapsed to $\gamma$ and bijection between $\gamma$ and $V_{\gamma+1}^{j(\CC)}$ is added.
\end{proof}

\begin{proposition}
	Let $\kappa$ be a $2$-huge cardinal. Then there is a generic elementary embedding on $\kappa = \gamma^+$ that is superstrong and not ${<}\omega$-closed.
\end{proposition}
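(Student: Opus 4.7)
The plan is to start from a $2$-huge embedding $j : V \to M$ with critical point $\kappa$, and set $\mu = j(\kappa)$, $\nu = j^2(\kappa)$, so that $V_\nu \subseteq M$ and $M$ is $\nu$-closed in $V$. A preliminary step, which I expect to be the main technical point, is to check that $\nu$ is itself measurable in $V$: in $M$ the cardinal $\nu$ is huge and hence carries a normal ${<}\nu$-complete ultrafilter $U$; the inclusion $V_\nu \subseteq M$ gives $\PPP(\nu)^V = \PPP(\nu)^M$, and the $\nu$-closure of $M$ ensures that every $V$-sequence of ${<}\nu$-many subsets of $\nu$ belongs to $M$, so $U$ remains a normal ${<}\nu$-complete ultrafilter in $V$ too.

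Next I would set $\CC = \Coll(\gamma, {<}\kappa)$ and lift $j$ to $j^\CC : V^\CC \to M^{j(\CC)}$ living in $V^{j(\CC)}$ via Theorem \ref{thm:coll_embedding}; Proposition \ref{prop:coll_closure_strength} applied with $\delta = \mu$ makes $j^\CC$ superstrong in $V^{j(\CC)}$. Since $|j(\CC)| = \mu < \nu$, a standard small-forcing argument transfers $U$ to a normal ${<}\nu$-complete ultrafilter $U'$ on $\nu$ in $V^{j(\CC)}$. I would then let $\DD$ be the Prikry forcing on $\nu$ associated with $U'$ in $V^{j(\CC)}$, and view $j^\CC$ as a generic elementary embedding on $\kappa = \gamma^+$ of $V^\CC$ inside the final extension $W = V^{j(\CC)}[\DD]$, which is itself obtained from $V^\CC$ by forcing with the iteration $(j(\CC)/\CC) * \dot{\DD}$.

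The verification in $W$ then splits into two halves. Superstrongness persists because $\DD$ has the Prikry property, adding no bounded subsets of $\nu$; hence $V_\mu^W = V_\mu^{V^{j(\CC)}}$, and the latter is already contained in $M^{j(\CC)}$ by the superstrongness of $j^\CC$ already established in $V^{j(\CC)}$. Failure of closure under countable sequences is witnessed by the Prikry generic $s : \omega \to \nu$ itself: each value $s(n) < \nu$ lies in $V_\nu \subseteq M \subseteq M^{j(\CC)}$, so $s$ is a countable sequence in $W$ of elements of $M^{j(\CC)}$; however $s$ is Prikry-generic over $V^{j(\CC)}$, hence $s \notin V^{j(\CC)} \supseteq M^{j(\CC)}$, and thus $s \notin M^{j(\CC)}$. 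Once measurability of $\nu$ in $V$ is secured, everything else reduces to routine applications of the Prikry property, Proposition \ref{prop:coll_closure_strength}, and the lifting machinery of Theorem \ref{thm:coll_embedding}.
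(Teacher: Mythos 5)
Your proof is correct, but it follows a genuinely different route from the paper's. The paper works internally: from the $2$-huge $j$ it derives a tower $\TT$ of length $j(\kappa)+\omega$; since this length has cofinality $\omega$ and no proper restriction of a tower of limit height expresses it, Theorem \ref{thm:not_closure} shows that $\Ult(V,\TT)$ fails to be closed under $\omega$-sequences \emph{already in $V$}, while superstrongness survives because the factor map onto $M$ has critical point above $j(\kappa)$; both properties then pass through $\CC=\Coll(\gamma,{<}\kappa)$ via Theorem \ref{thm:coll_embedding} and Proposition \ref{prop:coll_closure_strength}, the key point for non-closure being that $j(\CC)$ is countably closed and so cannot absorb a sequence in ${}^\omega M\setminus M$ into $M^{j(\CC)}$. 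You instead keep the highly closed $2$-huge embedding intact, lift it through the collapse, and destroy closure \emph{externally} by Prikry forcing at $\nu=j^2(\kappa)$: the Prikry sequence is a new $\omega$-sequence of ordinals below $\nu$, hence a countable sequence of elements of $M^{j(\CC)}$ lying outside $V^{j(\CC)}$ and a fortiori outside $M^{j(\CC)}$, while the Prikry property fixes $V_{j(\kappa)}$ between the intermediate and final models, so superstrongness is untouched. The steps you flag are all fine: $\nu$ is measurable in $V$ by $2$-hugeness (though note that $\PPP(\nu)^V=\PPP(\nu)^M$ comes from ${}^{\nu}M\subseteq M$ rather than from $V_\nu\subseteq M$ alone), L\'evy--Solovay transfers the measure past the small forcing $j(\CC)$, and the final model is a forcing extension of $V^{\CC}$ via the quotient of $j(\CC)$ by $\CC$ followed by $\dot{\DD}$, so $j^{\CC}$ remains a generic elementary embedding on $\kappa=\gamma^+$ there. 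What the paper's route buys is that the failure of closure is intrinsic to the derived system of filters and needs no forcing beyond the collapse, in line with the combinatorial analysis of Section \ref{ssec:combinatoric}; what yours buys is independence from the theory of towers, at the price of importing standard but external machinery (measurability of $j^2(\kappa)$, L\'evy--Solovay, the Prikry property) and of working over a longer iteration as the ambient generic extension.
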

\begin{proof}
	Let $j$ be a $2$-huge embedding in $V$ with critical point $\kappa$. Then we can derive a $\ap{\kappa, j(\kappa)+\omega}$-tower $\TT$ from $j$, so that $j_\TT$ is still $\kappa+\omega$-superstrong but by Theorem \ref{thm:not_closure} is not closed under $\omega$-sequences.
	
	Let $j^\CC : V^\CC \to M^{j(\CC)}$ be derived from $j_\TT$ as in Theorem \ref{thm:coll_embedding}. Since $j_\TT$ is $\kappa+\omega$-superstrong, by Proposition \ref{prop:coll_closure_strength} $j^\CC$ is still $\kappa+\omega$-superstrong (hence superstrong). Moreover $j^\CC$ is not ${<}\omega$-closed. In fact given any $A \in {}^\omega M \setminus M$, $j(\CC)$ cannot add $A$ since it is a set of size $\omega$ and $\CC$ is closed under $\omega$-sequences.
\end{proof}

	\section{Conclusions and open problems} \label{sec:conclusions}

In the last section we investigated some topics related to the definability of generic large cardinal properties. We gave a unified treatment of extenders and towers, and some partial results on how generic large cardinal embeddings are induced by set-sized objects. However, many questions remain open. We list them according to the ordering of sections of this paper.

\begin{question}
	Is $(j_\SSS \res a)^{-1}$ the unique element of $\bigcap j_\SSS[F_a]$?
\end{question}

\begin{question}
	Assume that $j$ is a ${<}\gamma$-closed embedding in $V[G]$, with $G$ $V$-generic for $\BB$. Can this be witnessed by a generic $\gamma$-extender of sufficient length \emph{independently of the chain condition satisfied by $\BB$}?
\end{question}

In Theorem \ref{thm:e_dominates_s}, we showed that a generic extender can have enough expressive power to approximate any other generic $\CCC$-system of ultrafilters. This observation suggests the following question.

\begin{question}
	Assume $\kappa$ has ideally property $P$, can this be witnessed by an extender in the ground model $V$?
\end{question}

Since $\kappa$ has ideally property $P$, there is a $\CCC$-system of filters $\SS$ in $V$ which witness property $P$ for $\kappa$. However, in general $\SS$ might not be an extender (e.g. a tower or a $\gamma$-extender). We already know that a generic extender $\EEE$ is able to fully approximate $\SS$, is this possible also for an ideal extender $\EE$ in $V$? We believe that this question could be an important cornerstone to uncover the following.

\begin{question}
	Is having ideally property $P$ equivalent to having generically property $P$?
\end{question}

In Section \ref{sec:generic_lc} we characterized large cardinal properties such as strongness, superstrongness and hugeness of an ultrapower embedding in terms of combinatoric properties of a $\CCC$-system of filters. However, there is a growing set of results on very large cardinals (see among others \cite{dimonte:I0GCH,dimonte:I0,woodin:beyondI0}), for which the notions of ${<}\gamma$-guessing and ${<}\gamma$-splitting are not enough.

\begin{question}
	How can large cardinal properties beyond hugeness be characterized for $\CCC$-systems of filters? 
\end{question}

	\bibliographystyle{amsplain}
	\bibliography{Bibliography}
\end{document}